\newtheorem{thm}{Theorem}[section]
\newtheorem{cor}[thm]{Corollary}
\newtheorem{lem}[thm]{Lemma}
\newtheorem{prop}[thm]{Proposition}
\theoremstyle{definition}
\newtheorem{defn}[thm]{Definition}
\newtheorem{conv}[thm]{Convention}
\theoremstyle{remark}
\newtheorem{rem}[thm]{Remark}
\numberwithin{equation}{section}
\newcommand{\bA}{\mathbb{A}}
\newcommand{\bB}{\mathbb{B}}
\newcommand{\bC}{\mathbb{C}}
\newcommand{\bF}{\mathbb{F}}
\newcommand{\bI}{\mathbb{I}}
\newcommand{\bP}{\mathbb{P}}
\newcommand{\bQ}{\mathbb{Q}}
\newcommand{\bR}{\mathbb{R}}
\newcommand{\bZ}{\mathbb{Z}}
\newcommand{\gM}{\bold{M}}
\newcommand\lra{\longrightarrow}
\newcommand\lla{\longleftarrow}
\newcommand\Diff{\mathrm{Diff}}
\newcommand\Emb{\mathrm{Emb}}
\newcommand\Bun{\mathrm{Bun}}
\newcommand\colim{\mathrm{colim}\,}
\newcommand\hocolim{\mathrm{hocolim}\,}
\newcommand{\hcoker}{/\!\!/}
\newcommand{\CircNum}[1]{\ooalign{\hfil\raise .00ex\hbox{\scriptsize #1}\hfil\crcr\mathhexbox20D}}
\newcommand{\X}{B}
\newcommand{\map}{\mathrm{map}}
\newcommand{\Link}{\mathrm{Link}}
\newcommand{\sticks}{\text{\rotatebox[origin=c]{90}{$\shortparallel$}}}
\newcommand{\cross}{\shortparallel}
\mathchardef\ordinarycolon\mathcode`\:
\title[Resolutions of moduli spaces]{Resolutions of moduli spaces\\and homological stability}
\author{Oscar Randal-Williams}
\email{o.randal-williams@dpmms.cam.ac.uk}
\address{Centre for Mathematical Sciences, Wilberforce Road, Cambridge CB3 0WB, UK}
\date{\today}
\subjclass[2000]{55R40, 57R15, 57R50, 57M07, 20J06}
\keywords{Homology stability, Mapping class groups, Moduli spaces}
\begin{document}

\begin{abstract}
We describe partial semi-simplicial resolutions of moduli spaces of surfaces with tangential structure. This allows us to prove a homological stability theorem for these moduli spaces, which often improves the known stability ranges and give explicit stability ranges in many new cases. In each of these cases the stable homology can be identified using the methods of Galatius, Madsen, Tillmann and Weiss.
\end{abstract}
\maketitle

\section{Introduction and statement of results}

Let $\Sigma_{g, b}$ be a fixed oriented surface of genus $g$ with $b$ boundary components, and
$$\Gamma_{g,b} := \pi_0(\Diff^+(\Sigma_{g,b}, \partial \Sigma_{g, b}))$$
denote its \textit{mapping class group}: the group of isotopy classes of orientation-preserving diffeomorphisms. Over the last thirty years there has been an intense interest in the homological aspects of this family of groups, stemming principally from the rational homology equivalence $B\Gamma_g \simeq_\bQ \mathbf{M}_g$ from the classifying space of $\Gamma_g$ to the moduli space of Riemann surfaces of genus $g$.

The fundamental contribution in this direction is due to Harer \cite{H}, in which, inspired by the formal similarity between the family of mapping class groups and the classical groups, he shows that these groups exhibit \textit{homological stability}: each of the natural maps between the $\Gamma_{g,b}$ induced by inclusions of surfaces gives a homology isomorphism in a range of degrees which tends to infinity with $g$. This stability range was later improved by Ivanov \cite{Ivanov} and Boldsen \cite{Boldsen}, and extended to also deal with certain coefficient modules.

More recently, there has been a great deal of interest in generalising and extending these results. Wahl \cite{Wahl} has extended the techniques of Harer and Ivanov to prove homological stability for mapping class groups of non-orientable surfaces. Cohen and Madsen \cite{CM, CM2} have defined certain moduli spaces $\mathcal{S}_{g,b}(X)$ of ``surfaces $\Sigma_{g,b}$ in a background space $X$" (which specialise to $B\Gamma_{g,b}$ when $X$ is a point) and used techniques of Ivanov to prove homological stability for these. The purpose of this paper is to generalise the above results to \textit{moduli spaces of surfaces with tangential structure}, which we now define.

\subsection{Moduli spaces of surfaces}

A \emph{tangential structure} is a map $\theta : \X \to BO(2)$ from a path-connected space $\X$, which classifies the bundle $\theta^*\gamma_2 \to \X$ pulled back from the universal bundle $\gamma_2 \to BO(2)$ via $\theta$. A \emph{$\theta$-structure} on a surface $F$ is a bundle map $TF \to \theta^*\gamma_2$, i.e.\ a continuous map between total spaces which is a linear isomorphism on each fibre, and we denote by $\Bun(TF, \theta^*\gamma_2)$ the space of all $\theta$-structures on $F$, endowed with the compact-open topology. 

If $\ell_{\partial F} : \epsilon^1 \oplus T(\partial F) \to \theta^*\gamma_2$ is a bundle map, which we call a \textit{boundary condition}, and $c : (-1,0] \times \partial F \hookrightarrow F$ is a collar, then we define $\Bun_\partial(TF, \theta^*\gamma_2;\ell_{\partial F})$ to be the space of bundle maps $\ell :TF \to \theta^*\gamma_2$ such that $Dc\vert_{\{0\} \times \partial F} \circ \ell\vert_{\partial F} = \ell_{\partial F}$. Let $\Diff_\partial(F)$ denote the group of diffeomorphisms of $F$ which restrict to the identity diffeomorphism on a neighbourhood of the boundary.

\begin{defn}\label{defn:BorelConstModel}
The \textit{moduli space of surfaces of topological type $F$ with $\theta$-structure and boundary condition $\ell_{\partial F}$} is the homotopy quotient (or Borel construction),
$$\mathcal{M}^\theta(F;\ell_{\partial F}) := \Bun_\partial(TF, \theta^*\gamma_2;\ell_{\partial F}) \hcoker \Diff_\partial(F).$$
This will not necessarily be path connected. If we do not wish to introduce notation for a boundary condition, we may write $\mathcal{M}^\theta(F)$ to denote $\mathcal{M}^\theta(F;\ell_{\partial F})$ with a fixed but unspecified boundary condition $\ell_{\partial F}$.
\end{defn}

In Section \ref{sec:ModuliSurfaces} we will give a precise definition of the topology on these spaces, and a particular model for the homotopy quotient. If we define
$$\mathcal{E}^\theta(F;\ell_{\partial F}) := (\Bun_\partial(TF, \theta^*\gamma_2;\ell_{\partial F}) \times F) \hcoker {\Diff_\partial(F)},$$
where the group acts diagonally, then projection to the first factor gives a smooth $F$-bundle
$$F \lra \mathcal{E}^\theta(F;\ell_{\partial F}) \overset{\pi}\lra \mathcal{M}^\theta(F;\ell_{\partial F})$$
equipped with a bundle map $T_\pi \mathcal{E}^\theta(F;\ell_{\partial F}) \to \theta^*\gamma_2$ from the vertical tangent bundle, satisfying appropriate boundary conditions. The bundle $\pi$ is universal among fibre bundles enjoying these properties. 

Examples of tangential structure we have in mind are: no structure at all, given by the identity map $BO(2) \to BO(2)$; orientations, given by the double cover $BSO(2) \to BO(2)$; Spin structures, given by the bundle $B\mathrm{Spin}(2) \to BO(2)$; and any of these together with a map to a background space $X$, e.g.\ the map $BSO(2) \times X \to BO(2)$ given by projection to the first factor and then the double cover. These examples have been studied in the literature, but in a companion paper \cite{RWFramedPinMCG} we use the main theorems of this paper to investigate tangential structures for which homological stability was not previously known, such as framings, $r$-Spin structures, and $\mathrm{Pin}^\pm$ structures.

\subsection{Stabilisation}

If $F$ is a collared surface, and $K$ is a cobordism from $\partial F$ to a 1-manifold $\partial F'$, which is collared at both ends, then there is a canonical smooth structure on $F' := F \cup_{\partial F} K$, making it into a collared surface. Given a $\theta$-structure $\ell_K$ on $K$, we obtain induced boundary conditions $\ell_{\partial F}$  and $\ell_{\partial F'}$, and a \emph{stabilisation map}
\begin{equation}\label{eq:StabilisationMap}
(K, \ell_K)_* : \mathcal{M}^\theta(F;\ell_{\partial F}) \lra \mathcal{M}^\theta(F';\ell_{\partial F'}).
\end{equation}

Qualitatively speaking, we say that the homology group $H_k(\mathcal{M}^\theta)$ \emph{stabilises for orientable surfaces} if each map (\ref{eq:StabilisationMap}) with $F$ and $F'$ both orientable induces an isomorphism on $k$th homology when the genus of $F$ is large enough. Similarly, we say $H_k(\mathcal{M}^\theta)$ \emph{stabilises for non-orientable surfaces} if each map (\ref{eq:StabilisationMap}) with $F$ and $F'$ both non-orientable induces an isomorphism on $k$th homology when the genus of $F$ is large enough (genus in this case is to be interpreted as the maximal number of $\bR\bP^2$ connect-summands). The following is the qualitative version of our main theorem.

\begin{thm}\label{thm:QualStabThm}
Fix a tangential structure $\theta : \X \to BO(2)$.
\begin{enumerate}[(i)]
\item\label{it:thm:QualStabThm:1} Suppose that the bundle $\theta^*\gamma_{2} \to \X$ is orientable, and the zeroth homology groups $H_0(\mathcal{M}^\theta)$ stabilise for orientable surfaces. Then the homology groups $H_i(\mathcal{M}^\theta)$ stabilise for every $i \geq 0$ for orientable surfaces.

\item Suppose that the zeroth homology groups $H_0(\mathcal{M}^\theta)$ stabilise for non-orientable surfaces. Then the homology groups $H_i(\mathcal{M}^\theta)$ stabilise for every $i \geq 0$ for non-orientable surfaces.
\end{enumerate}
\end{thm}

In practice one wants concrete estimates, saying that all stabilisation maps out of $\mathcal{M}^\theta(F;\ell_{\partial F})$ are isomorphisms in degrees $* \leq k$ if the genus of $F$ is at least $f(k)$ for some explictly given function $f$. In Theorems \ref{thm:StabOrientableSurfaces} and \ref{thm:StabNonorientableSurfaces}  we give a quantitative version of this theorem, which provides such a function $f$, and in fact gives more refined information concerning the stability range for certain basic stabilising cobordisms $K$. The statement of this quantitative theorem is complicated, and involves developing some theory first. We will not state it here, but in the following two sections we give some corollaries of this quantitative statement for the most basic tangential structures.

\begin{rem}
The assumption that $\theta^*\gamma_2$ be orientable when considering orientable surfaces is essential if the result is stated in this generality: see Remark \ref{rem:Orient2} for a discussion. It is also technically convenient for our proof: see Remark \ref{rem:Orient1}.
\end{rem}

\subsection{Quantitative results for orientable surfaces}\label{sec:QuantOrientable}

Recall that we use the notation $\Sigma_{g,b}$ for a fixed model connected orientable surface of genus $g$ and with $b$ boundary components (i.e.\ the surface obtained from $\#^g S^1 \times S^1$ by removing the interiors of $b$ disjoint closed discs). As shorthand we write $\mathcal{M}^\theta(\Sigma_{g,b})$ for any space $\mathcal{M}^\theta(F;\ell_{\partial F})$ where $F$ is diffeomorphic to $\Sigma_{g,b}$.

Using this notation, there are several basic forms of the stabilisation maps introduced in the last section, which we write as
\begin{eqnarray*}
	\alpha & : & \mathcal{M}^\theta(\Sigma_{g, b}) \lra \mathcal{M}^\theta(\Sigma_{g+1, b-1})\\
	\beta & : & \mathcal{M}^\theta(\Sigma_{g, b}) \lra \mathcal{M}^\theta(\Sigma_{g, b+1}) \\
	\gamma & : & \mathcal{M}^\theta(\Sigma_{g, b}) \lra \mathcal{M}^\theta(\Sigma_{g, b-1})
\end{eqnarray*}
The notation $\alpha$ denotes any stabilisation map $(K, \ell_K)_*$ where $K$ is a cobordism given by a pair of pants with legs on the incoming boundary (and perhaps some disjoint trivial cobordisms). Similarly, the notation $\beta$ denotes any stabilisation map given by a pair of pants with legs on the outgoing boundary (and perhaps some disjoint trivial cobordisms). Finally, $\gamma$ denotes any stabilisation map given by a disc with boundary on the incoming boundary (and perhaps some disjoint trivial cobordisms). We write these as $\alpha(g)$, $\beta(g)$ and $\gamma(g)$ when we want to record the genus of the smaller surface.

We emphasise that the notation $\alpha$, $\beta$ or $\gamma$ does not specify the stabilisation map $(K, \ell_K)_*$: in each case there is a combinatorial choice of which boundary components the pairs of pants or disc is attached to, as well as a choice of which $\theta$-structure the cobordism is given.

\vspace{2ex}

Consider the tangential structure $\theta : BSO(2) \to BO(2)$. Choose a $\theta$-structure $\ell_{g,b}$ on $\Sigma_{g,b}$, and let $\ell_{\partial \Sigma_{g,b}}$ be the $\theta$-structure induced on the boundary. By elementary obstruction theory the space $\Bun_\partial(T\Sigma_{g,b}, \theta^*\gamma_2 ; \ell_{\partial \Sigma_{g,b}})$ is then contractible if $b > 0$, or has two contractible components if $b=0$, given by the two possible orientations of $\Sigma_{g,0}$. Thus in either case we obtain a homotopy equivalence
$$\mathcal{M}^\theta(\Sigma_{g, b}, \ell_{\partial \Sigma_{g,b}}) \simeq B\Diff_\partial^+(\Sigma_{g,b})$$
to the classifying space of the group of orientation-preserving diffeomorphisms of $\Sigma_{g,b}$. By a theorem of Earle--Eells \cite[p.\ 24]{EE}, the quotient map
$$\Diff_\partial^+(\Sigma_{g,b}) \lra \pi_0(\Diff_\partial^+(\Sigma_{g,b})) =: \Gamma_{g,b}$$
to the mapping class group is a weak homotopy equivalence (for $b>0$ or $g \geq 2$), so in total we have a homotopy equivalence $\mathcal{M}^+(\Sigma_{g, b}, \ell_{\partial \Sigma_{g,b}}) \simeq B\Gamma_{g,b}$. (In turn, $B\Gamma_{g}$ is rationally equivalent to Riemann's moduli space $\gM_g$.) Under these equivalences, the stabilisation maps correspond to the homomorphisms between mapping class groups induced by inclusions of subsurfaces.

The results of Theorem \ref{thm:StabOrientableSurfaces} in this case show that
\begin{enumerate}[(i)]
	\item Any $\alpha(g)_* : H_*(\Gamma_{g, b}) \to H_*(\Gamma_{g+1, b-1})$ is an epimorphism for $3* \leq 2g+1$ and an isomorphism for $3* \leq 2g-2$.
	\item Any $\beta(g)_* : H_*(\Gamma_{g, b}) \to H_*(\Gamma_{g, b+1})$ is an isomorphism for $3* \leq 2g$ and a monomorphism in all degrees.
	\item Any $\gamma(g)_* : H_*(\Gamma_{g, b+1}) \to H_*(\Gamma_{g, b})$ is an isomorphism for $3* \leq 2g$. For $b > 0$ it is an epimorphism in all degrees; for $b=0$ it is an epimorphism for $3* \leq 2g + 3$.
\end{enumerate}
This coincides with the stability range recently obtained by Boldsen \cite{Boldsen}, except that our range for closing the last boundary component is slightly better. 

\vspace{2ex}

Cohen and Madsen \cite{CM} introduced certain \textit{moduli spaces of surfaces with maps to a background space $X$}, denoted $\mathcal{S}_{g,b}(X)$, and studied their homology stability when $X$ is simply connected. In our notation these are simply the spaces $\mathcal{M}^\theta(\Sigma_{g, b})$ for the tangential structure $\theta : BSO(2) \times X \to BO(2)$, with boundary condition that $\partial \Sigma_{g, b}$ is mapped constantly to a basepoint $x_0 \in X$. In Section \ref{sec:DerivingBGSpaceSurfacesRange} we show that, when $X$ is simply connected, these moduli spaces exhibit homology stability in the same ranges of degrees as given above for $B\Gamma_{g,b}$. This slightly improves the stability range recently obtained by Boldsen \cite{Boldsen} in the case of surfaces with non-empty boundary, and also holds for closed surfaces (his methods are unable to prove stability for closing the last boundary).

\subsection{Quantitative results for non-orientable surfaces}\label{sec:QuantNonorientable}

As in the oriented case, we let $S_{g,b}$ denote a fixed model connected non-orientable surface of genus $g$ and with $b$ boundary components (i.e.\ the surface obtained from $\#^g \bR\bP^2$ by removing the interiors of $b$ disjoint closed discs). We again write $\mathcal{M}^\theta(S_{g,b})$ for any space $\mathcal{M}^\theta(F;\ell_{\partial F})$ where $F$ is diffeomorphic to $S_{g,b}$.

As well as analogues of the stabilisation maps $\alpha$, $\beta$ and $\gamma$, there is one further type of basic stabilisation map for non-orientable surfaces,
\begin{eqnarray*}
\mu & : & \mathcal{M}^\theta(S_{n, b}) \lra \mathcal{M}^\theta(S_{n+1, b}),
\end{eqnarray*}
which denotes any stabilisation map given by a projective plane with two open discs removed (and perhaps some disjoint trivial cobordisms).

Consider the trivial tangential structure $\theta : BO(2) \to BO(2)$. By the universal property of the bundle $\gamma_2 \to BO(2)$, the spaces $\Bun_\partial(TF, \gamma_2 ; \ell_{\partial F})$ are always contractible, and so there is a homotopy equivalence
$$\mathcal{M}^\theta(F;\ell_{\partial F}) \simeq B\Diff_\partial(F)$$
for any surface $F$. By a nonorientable version of the theorem of Earle--Eells, proved in full generality by Gramain \cite[Th{\'e}or{\`e}me 1]{Gramain}, the quotient map
$$\Diff_\partial(S_{g,b}) \lra \pi_0(\Diff_\partial(S_{g,b})) =: \mathcal{N}_{g,b}$$
to the mapping class group is a weak homotopy equivalence (for $b>0$ or $g \geq 3$), so we have an equivalence $\mathcal{M}^\theta(S_{g,b};\ell_{\partial S_{g,b}}) \simeq B\mathcal{N}_{g,b}$ for any choice of boundary condition $\ell_{\partial S_{g,b}}$.

The results of Theorem \ref{thm:StabNonorientableSurfaces} in this case show that
\begin{enumerate}[(i)]
	\item Any $\alpha(g)_* : H_*(\mathcal{N}_{g, b}) \to H_*(\mathcal{N}_{g+2, b-1})$ is an isomorphism for $3* \leq g-3$.
	\item Any $\beta(g)_* : H_*(\mathcal{N}_{g, b}) \to H_*(\mathcal{N}_{g, b+1})$ is an isomorphism for $3* \leq g-3$ and a monomorphism in all degrees.
	\item Any $\gamma(g)_* : H_*(\mathcal{N}_{g, b+1}) \to H_*(\mathcal{N}_{g, b})$ is an isomorphism for $3* \leq g-3$. For $b > 0$ it is an epimorphism is all degrees; for $b=0$ it is an epimorphism for $3* \leq g$.
	\item Any $\mu(g)_* : H_*(\mathcal{N}_{g, b}) \to H_*(\mathcal{N}_{g+1, b})$ is an epimorphism for $3* \leq g$ and an isomorphism for $3* \leq g-3$.
\end{enumerate}
This stability range for non-orientable surfaces improves on the previously best known range, due to Wahl \cite{Wahl} which was of slope $1/4$, whereas ours is of slope $1/3$.

\vspace{2ex}

As in the oriented case we can also consider the tangential structure $\theta : BO(2) \times X \to BO(2)$, so the spaces $\mathcal{M}^\theta(S_{n, b})$ are moduli spaces of non-orientable surfaces equipped with a map to a background space $X$. These have not yet appeared in the literature, but fit into our general framework. In Section \ref{sec:DerivingBGSpaceSurfacesRange} we show that, when $X$ is simply connected, these spaces have the same stability range as that given above for $B\mathcal{N}_{g,b}$.

\subsection{Stable homology}

By \cite{MW, GMTW, GR-W}, once one has a homological stability theorem for a tangential structure $\theta$ on surfaces, the stable homology coincides with that of the infinite loop space of the Madsen--Tillmann spectrum
$$\mathbf{MT\theta} := \mathbf{Th}(-\theta^*\gamma_2 \to \X),$$
given by the Thom spectrum of the negative of the bundle classified by the map $\theta$. Calculations of the rational cohomology of these infinite loop spaces are quite elementary, and can be found in \cite{MW} for oriented surfaces, \cite{Wahl} for non-orientable surfaces, and \cite{CM} for oriented surfaces with maps to a simply-connected background space. Calculations of $\bF_p$-homology are far more subtle, and can be found in \cite{galatius-2004} for oriented surfaces, \cite{R-W} for non-orientable surfaces, and \cite{galatius-2005} for Spin surfaces. 

\subsection{Further tangential structures}

In this paper we have focused on the applications of the homological stability theory we have developed to the most usually considered tangential structures, giving the moduli spaces of oriented and non-orientable surfaces, and the same with maps to a simply connected background space. However, the purpose of developing the theory in such generality is for its application to new tangential structures, so let us briefly mention these applications.

In a companion paper \cite{RWFramedPinMCG} we verify the hypotheses of Theorems \ref{thm:StabOrientableSurfaces} and \ref{thm:StabNonorientableSurfaces} for the tangential structures given by framings, Spin structures (and more generally $r$-Spin structures), and $\mathrm{Pin}^\pm$ structures. We then give computational applications of these stability results, for example: the framed mapping class group has trivial stable rational homology, and its stable abelianisation is $\bZ/24$; the $\mathrm{Pin}^+$ mapping class group has stable abelianisation $\bZ/2$, and the $\mathrm{Pin}^-$ mapping class group has stable abelianisation $(\bZ/2)^3$.

In \cite{RWPicardrSpin} we apply homology stability for the moduli spaces $\gM_g^{1/r}$ of $r$-Spin Riemann surfaces to compute their orbifold Picard groups, and to identify presentations for these groups in terms of canonically constructed line bundles. In \cite{ERW10}, Ebert and the author use homology stability for the spaces $\mathcal{S}_{g,b}(\bC\bP^\infty)$ (and in particular stability in integral homology for closing the last boundary, for which there is no other known proof) to study the cohomology of the universal Picard variety over $\gM_g$, and hence to study Kawazumi's extended mapping class group.

\subsection{Remarks on the proof}
Proofs of homological stability for families of groups now have a well established strategy: one finds highly-connected simplicial complexes on which the group acts, and such that the stabiliser subgroups of each simplex are ``smaller" groups in the family. Once such a complex has been found, there are two basic spectral sequence arguments one can use, an ``absolute" and a ``relative" one, to prove homological stability by induction.

In studying mapping class groups, the natural simplicial complex to use is the complex of isotopy classes of disjoint non-separating arcs, as was originally used by Harer \cite{H} and more recently by Boldsen \cite{Boldsen}. Unfortunately the mapping class group does not act transitively on the simplices of this complex; this complicates the spectral sequence, and one must carefully study the domain and range of differentials. We prefer to use a smaller complex, on which the mapping class group does act transitively, which considerably simplifies the spectral sequence argument. If we take arcs in an orientable surface with their ends on the same boundary component this is the complex $B_0(\Sigma)$ used by Ivanov \cite{Ivanov}, but for arcs with ends on different boundaries or for non-orientable surfaces it is a new complex. For the reader interested only in the mapping class groups of oriented surfaces, this approach has been explained by Wahl in her survey \cite{WahlSurvey}.

The second difference in our approach is that we deal exclusively with diffeomorphism groups, not mapping class groups, and so instead of the arc complex we consider a semi-simplicial space made from spaces of embeddings of arcs in a surface, on which the diffeomorphism group of the surface acts. We also find that working with diffeomorphism groups allows for arguments that one simply cannot make with mapping class groups: in Section \ref{sec:LastBoundary} we show how for any $d$-manifold $M$ we can express $B\Diff(M)$ as a homotopy colimit of the spaces $\{B\Diff_\partial(M \setminus \sqcup^n D^d)\}_{n \geq 1}$, and we use this in the case of surfaces to prove homology stability for closing the last boundary component.

\subsection{Outline}
In Section \ref{sec:SimplicialSpaces} we describe some standard notions concerning semi-simplicial spaces and the spectral sequence coming from their skeletal filtration. In Section \ref{sec:ModuliSurfaces} and \ref{sec:FlexModel} we give a careful definition of the moduli spaces $\mathcal{M}^\theta(F;\ell_{\partial F})$, and introduce several different models for these spaces, and for the stabilisation maps between them. In Section \ref{sec:Resolutions} we define semi-simplicial ``resolutions" of our moduli spaces, and establish their properties.

In Section \ref{sec:kTriviality} we introduce the notion of $k$-triviality of a tangential structure $\theta$, and show that it may be verified by knowing only the sets $\pi_0(\mathcal{M}^\theta(F;\ell_{\partial F}))$ and the stabilisation maps between them. For a tangential structure $\theta$, enjoying the property of $k$-triviality will be the key requirement in proving that moduli spaces of $\theta$-surfaces have stability. In Section \ref{sec:StabilityTheorems} we give the statements of our main results, Theorems  \ref{thm:StabOrientableSurfaces} and \ref{thm:StabNonorientableSurfaces}. We then show how to deduce the results of Sections \ref{sec:QuantOrientable} and \ref{sec:QuantNonorientable} from these theorems.

In Section \ref{sec:ZeroInHomology} we give the main technical application of the notion of $k$-triviality, which is showing that certain long compositions of relative stabilisation maps are zero in homology. In Sections \ref{sec:PfOrientable} and \ref{sec:PfNonorientable} we give the proofs of the theorems of Section \ref{sec:StabilityTheorems}, which is a spectral sequence argument hinging on the notion of $k$-triviality. In Section \ref{sec:LastBoundary} we discuss stability for closing the last boundary component, and we give a new argument that uses in an important way that we are working with diffeomorphism groups and not mapping class groups. In Section \ref{sec:StableHomology} we explain how, once homological stability has been established, the stable homology may be calculated.

In Appendix \ref{app:CxArcs} we deduce the connectivities of certain complexes of arcs in surfaces, starting from results of Harer \cite{H} and Wahl \cite{Wahl}, which are necessary to prove the results of Section \ref{sec:Resolutions}. This is included as an appendix as it may be of interest independent of the body of the article.

\subsection{Acknowledgements}

I am grateful to Nathalie Wahl for both her interest in this project and her suggestions, which have greatly improved it. I am also grateful to the anonymous referee for their many helpful and incisive comments, which have greatly helped to improve the readability, and indeed the veracity, of this paper. This paper has been revised several times since first appearing in 2009, and while the essential approach has remained constant, I have incorporated several expository and technical ideas which I have learnt since then. I have learnt these principally through collaborations with S{\o}ren Galatius and with Federico Cantero, as well as discussions with Nathalie Wahl, and I would like to thank them all.

During the preparation of this article the author has been supported by an EPSRC Studentship, ERC Advanced Grant No.\ 228082, and the Danish National Research Foundation via the Centre for Symmetry and Deformation, and the Herchel Smith Fund.

\section{Semi-simplicial spaces and resolutions}\label{sec:SimplicialSpaces}

Let $\Delta^{op}$ denote the opposite of the simplicial category $\Delta$ having objects the finite ordered sets $[n] = \{0,1,\ldots,n\}$ and morphisms the weakly monotone maps. A \textit{simplicial object} in a category $\mathcal{C}$ is a functor $X_\bullet : \Delta^{op} \to \mathcal{C}$. Let $\Delta_{inj} \subset \Delta$ be the subcategory having all objects but only the strictly monotone maps, called the \textit{semi-simplicial category}. Call a functor $X_\bullet : \Delta^{op}_{inj} \to \mathcal{C}$ a \textit{semi-simplicial object} in $\mathcal{C}$, and write $X_n := X_\bullet([n])$. A (semi-)simplicial map $f: X_\bullet \to Y_\bullet$ is a natural transformation of functors: in particular, it has components $f_n :X_n \to Y_n$.

The geometric realisation of a semi-simplicial space $X_\bullet$ is
$$\vert X_\bullet\vert  = \coprod_{n \geq 0} X_n \times \Delta^n / \sim$$
where the equivalence relation is $(d_i(x), y) \sim (x, d^i(y))$, for $d^i : \Delta^n \to \Delta^{n+1}$ the inclusion of the $i$th face. 

If $X_\bullet$ is a semi-simplicial pointed space, its realisation \textit{as a pointed space} is
$$\vert X_\bullet\vert_* = \bigvee_{n \geq 0} X_n \rtimes \Delta^n / \sim$$
where $d_i(x) \rtimes y \sim x \rtimes d^i(y)$. (Recall that the \textit{half smash product} of a space $Y$ and a pointed space $(C,c_0)$ is the pointed space $C \rtimes Y := C \times Y / \{c_0\} \times Y$.) If $X_\bullet^+$ denotes levelwise addition of a disjoint basepoint, then there is a homeomorphism $\vert X_\bullet^+\vert_* \cong \vert X_\bullet\vert_+$. More generally, if the $X_s$ are all well-pointed there is a homeomorphism $\vert X_\bullet\vert_* \cong \vert X_\bullet\vert  / \vert *_\bullet\vert$, where $*_\bullet$ is the semi-simplicial space with a single point in each degree, and in particular $\vert X_\bullet\vert_* \simeq \vert X_\bullet\vert$.

\vspace{2ex}

The skeletal filtration of $\vert X_\bullet\vert$ gives a strongly convergent first quadrant spectral sequence
\begin{equation}\tag{sSS}\label{SSRestrictedSimplicialSpace}
    E^1_{s,t} = h_t(X_s) \Longrightarrow h_{s+t}(\vert X_\bullet\vert)
\end{equation}
for any connective generalised homology theory $h_*$. The $d^1$ differential is given by the alternating sum of the face maps, $d^1 = \sum (-1)^i (d_i)_*$. There is also a pointed analogue, which takes the form
\begin{equation}\tag{PsSS}\label{SSPointedSimplicialSpace}
    E^1_{s,t} = h_t(X_s, *) \Longrightarrow h_{s+t}(\vert X_\bullet\vert_*, *)
\end{equation}
as long as each $X_s$ is well-pointed.

\subsection{Relative semi-simplicial spaces}

Let $f_\bullet: X_\bullet \to Y_\bullet$ be a map of semi-simplicial spaces. Then the levelwise homotopy cofibres form a semi-simplicial pointed space $C_{f_\bullet}$ which is well-pointed in each degree, and
$$\vert C_{f_\bullet}\vert_* \cong C_{\vert f_\bullet\vert}$$
as homotopy colimits commute. In particular, the spectral sequence (\ref{SSPointedSimplicialSpace}) for this semi-simplicial pointed space takes the form
\begin{equation}\tag{RsSS}\label{SSRelativeRestrictedSimplicialSpace}
    E^1_{s,t} = {h}_{t}(C_{f_s}, *) \cong h_t(Y_s, X_s) \Longrightarrow {h}_{s+t}(C_{\vert f_\bullet\vert}, *) \cong h_{s+t}(\vert Y_\bullet\vert, \vert X_\bullet\vert).
\end{equation}

\subsection{Augmented semi-simplicial spaces}\label{sec:AugSSSpaces}

An \textit{augmentation} of a (semi-)simplicial space $X_\bullet$ is a space $X_{-1}$ and a map $\epsilon : X_0 \to X_{-1}$ such that $\epsilon d_0 = \epsilon d_1 : X_1 \to X_{-1}$. An augmentation induces a map $\vert \epsilon\vert  : \vert X_\bullet\vert \to X_{-1}$. In this case there is a spectral sequence defined for $s \geq -1$,
\begin{equation}\tag{AsSS}\label{SSAugmentedRestrictedSimplicialSpace}
E^1_{s,t} = h_t(X_s) \Longrightarrow h_{s+t+1}(C_{\vert \epsilon\vert}, *) \cong h_{s+t+1}(X_{-1}, \vert X_\bullet\vert).
\end{equation}
for any connective generalised homology theory $h_*$. The $d^1$ differentials are as above for $s>0$, and $d^1 : E^1_{0,t} \to E^1_{-1, t}$ is given by $\epsilon_*$.

There is also a relative version of this construction. Let $f: (\epsilon_X : X_\bullet \to X_{-1}) \to (\epsilon_Y : Y_\bullet \to Y_{-1})$ be a map of augmented semi-simplicial spaces. There is a spectral sequence defined for $s \geq -1$,
\begin{equation}\tag{RAsSS}\label{SSRelativeAugmentedRestrictedSimplicialSpace}
E^1_{s,t} = h_t(X_s, Y_s) \Longrightarrow h_{s+t+1}(C_{\vert \epsilon_X\vert }, C_{\vert \epsilon_Y\vert}).
\end{equation}

\subsection{Resolutions}

For our purposes, a \textit{resolution} of a space $X$ is an augmented semi-simplicial space $X_\bullet \to X$ such that the map $\vert X_\bullet\vert \to X$ is a weak homotopy equivalence. An \textit{$n$-resolution} of a space $X$ is an augmented semi-simplicial space $X_\bullet \to X$ such that the map $\vert X_\bullet\vert \to X$ is $n$-connected.

\subsection{The fibre of the augmentation map}\label{sec:AugFib}

Let $G$ be a topological group, $B$ be a $G$-space, and $K_\bullet$ be a simplicial $G$-space. Then $(K_\bullet \times B) \hcoker G$ is a simplicial space augmented over $B\hcoker G$, and we will occasionally need to understand the homotopy fibre of the augmentation map $\vert (K_\bullet \times B) \hcoker G\vert \to B \hcoker G$.

\begin{lem}\label{lem:AugTriv}
Suppose we work in the category of compactly generated weak Hausdorff spaces. Then $\vert (K_\bullet \times B) \hcoker G\vert \to B \hcoker G$ is a locally trivial fibre bundle with fibre $\vert K_\bullet\vert$.
\end{lem}

Because of the assumptions of this lemma, we shall always work in the category of compactly generated weak Hausdorff spaces, without further mention.

\begin{proof}
The natural maps
$$\vert K_\bullet \times B\vert \times_G EG \longleftarrow \vert (K_\bullet \times B) \times EG\vert / G \lra \vert (K_\bullet \times B) \times_G EG\vert$$
are both homeomorphisms. The first as $\vert (K_\bullet \times B) \times EG\vert \to \vert K_\bullet \times B\vert \times EG$ is, as we are working in CGWH spaces, and the second as an inverse may be constructed by induction on skeleta. Finally, the natural map $\vert K_\bullet \times B\vert \to \vert K_\bullet \vert \times B$ is also a homeomorphism, as we are working in CGWH spaces.
\end{proof}

\section{Moduli spaces of surfaces}\label{sec:ModuliSurfaces}

In this section we shall give an alternative definition of moduli spaces of surfaces with $\theta$-structure, which for many purposes is more convenient than that of Definition \ref{defn:BorelConstModel} (though we will show in Section \ref{sec:BorelConst} that these models are homotopy equivalent). We will also describe the analogue of the stabilisation maps \eqref{eq:StabilisationMap} in this model.

We first recall some definitions regarding $\theta$-structures, which we already gave briefly in the introduction.

\begin{defn}
A \emph{tangential structure} is a map $\theta : \X \to BO(2)$ from a path-connected space. A $\theta$-structure on a manifold $M$ of dimension $d \leq 2$ is a bundle map $\ell : \epsilon^{2-d} \oplus TM \to \theta^*\gamma_2$, i.e.\ a map which is a fibrewise linear isomorphism. We call such a pair $(M, \ell)$ a \emph{$\theta$-manifold of dimension $d$}.

For a surface $F$, let $\Bun^\theta(F)$ denote the space of $\theta$-structures on $F$, equipped with the compact-open topology. If $\ell_0$ is a $\theta$-structure on the 1-manifold $\partial F$, and $F$ is equipped with a collar $c : (-1,0] \times \partial F \hookrightarrow F$, then we let $\Bun^\theta_\partial(F;\ell_0) \subset \Bun^\theta(F)$ denote the subspace of those bundle maps $\ell$ such that the composition
$$\epsilon^1 \oplus T(\partial F) \lra T((-1,0] \times \partial F) \overset{Dc}\lra TF \overset{\ell}\lra \theta^*\gamma_2$$
is $\ell_0$, where the first map is the canonical isomorphism onto $T((-1,0] \times \partial F)\vert_{\{0\} \times \partial F}$
\end{defn}

Let $\Psi_\theta(\bR^N)$ denote the set of pairs $(X, \ell_X)$ where $X \subset \bR^N$ is a topologically closed subset which is a smooth submanifold of dimension 2, and $\ell_X : TX \to \theta^*\gamma_2$ is a $\theta$-structure on $X$. In \cite[\S 2]{GR-W} Galatius and the author have described a topology on this set which is ``compact-open" in flavour, and it will be convenient to use the colimit topology on
$$\Psi_\theta = \Psi_\theta(\bR \times \bR^\infty) := \underset{n \to \infty} \colim \Psi_\theta(\bR \times \bR^n)$$
to describe various models for moduli spaces of surfaces with $\theta$-structure that we will require.

\begin{conv}\mbox{}
\begin{enumerate}[(i)]
\item For clarity we will omit the $\theta$-structure from the notation when referring to $\theta$-manifolds, and write $\ell_M$ for the $\theta$-structure on a  $\theta$-manifold $M$ if we explicitly need to refer to it.

\item In $\bR \times \bR^\infty$ we shall write $e_i$ for the $i$th basis vector, starting with $e_0$ for the vector $(1,0,0,\ldots)$.
\end{enumerate}
\end{conv}

\begin{defn}
Let $P \subset \bR^\infty$ be a compact closed smooth $\theta$-manifold of dimension 1. Let $\mathcal{N}^\theta(P)$ denote the set of pairs of a compact surface $X \subset (-\infty,0] \times \bR^\infty$ and a $\theta$-structure $\ell_X : TX \to \theta^*\gamma_2$, such that $(X, \ell_X)$ agrees with $(-\infty,0] \times P$ as a $\theta$-manifold near $\{0\} \times \bR^\infty$. To such a $\theta$-manifold $X$ we can associate the \emph{extended} $\theta$-manifold
$$X^e := X \cup ([0,\infty) \times P) \subset \bR \times \bR^\infty,$$
which is an element of $\Psi_\theta$, and this construction defines an injective function $X \mapsto X^e : \mathcal{N}^\theta(P) \to \Psi_\theta$; we give $\mathcal{N}^\theta(P)$ the subspace topology.
\end{defn}

\subsection{Connectedness, orientation type, and genus}

Let us say that a connected orientable surface $F$ has \emph{genus $g$} if it is diffeomorphic to the surface obtained by removing a collection of disjoint open discs from $\#^g S^1 \times S^1$. Similarly, let us say that a connected non-orientable surface $F$ has \emph{genus $g$} if it is diffeomorphic to the surface obtained by removing a collection of disjoint open discs from $\#^g \bR\bP^2$.
 
\begin{defn}\label{defn:ConnOrientGenus}\mbox{}
\begin{enumerate}[(i)]
\item Let $\mathcal{M}^\theta(P) \subset \mathcal{N}^\theta(P)$ denote the subspace consisting of those surfaces $X$ which are path connected.

\item For $g \geq 1$, let $\mathcal{M}^\theta(g,-;P) \subset \mathcal{M}^\theta(P)$ denote the subspace of those surfaces $X$ which are non-orientable and have genus $g$.

\item\label{it:defn:ConnOrientGenus:3} For $g \geq 0$, if $\theta^*\gamma_2$ is orientable then let $\mathcal{M}^\theta(g,+;{P}) \subset \mathcal{M}^\theta(P)$ denote the subspace of those surfaces $X$ which are orientable and have genus $g$.
\end{enumerate}
\end{defn}

Note that all of these spaces are unions of path components of $\mathcal{N}^\theta(P)$. The reader will see that the notation $\mathcal{M}^\theta(-)$ is used in two ways. In Definition \ref{defn:ConnOrientGenus}, for $P \subset \bR^\infty$ a compact closed 1-dimensional $\theta$-manifold, $\mathcal{M}^\theta(P)$ denotes the space of connected surfaces with $\theta$-structure having boundary equal to $P$. In Definition \ref{defn:BorelConstModel}, for $F$ a abstract collared surface and $\ell_{\partial F}$ a $\theta$-structure on $\partial F$, $\mathcal{M}^\theta(F;\ell_{\partial F})$ denotes the space of surfaces diffeomorphic to $F$ equipped with $\theta$-structure extending $\ell_{\partial F}$. This last description is not immediate from Definition \ref{defn:BorelConstModel}, and in Section \ref{sec:BorelConst} we explain the connection. It will always be clear which of these two notions the notation $\mathcal{M}^\theta(-)$ represents.

\subsection{Relation to the Borel construction model}\label{sec:BorelConst}

Let $F$ be a surface and $c : (-1,0] \times \partial F \hookrightarrow F$ be a collar, and let $\ell_{\partial F}$ be a boundary condition. Choose an embedding $e_0 : \partial F \hookrightarrow \bR^\infty$, and let
$$\Emb_\partial(F, (-\infty,0] \times \bR^\infty ; e_0)$$
denote the set of embeddings $e : F \hookrightarrow (-\infty,0] \times \bR^\infty$ such that there exists an $\epsilon>0$ such that $(e \circ c)(t, x) = (t, e_0(x))$ for all $\vert t \vert < \epsilon$. We give this space the $C^\infty$-topology. Similarly, we let $\Diff_\partial(F)$ denote the set of those diffeomorphisms of $F$ which are the identity on $c((-\epsilon,0] \times \partial F)$ for some $\epsilon>0$, again with the $C^\infty$-topology.

The action of $\Diff_\partial(F)$ on $\Emb_\partial(F, (-\infty,0] \times \bR^\infty ; e_0)$ by precomposition exhibits $\Emb_\partial(F, (-\infty,0] \times \bR^\infty ; e_0)$ as a principal $\Diff_\partial(F)$-space, by the main result of Binz--Fischer \cite{BinzFischer}. Furthermore, it is well-known that such spaces of embeddings into infinite-dimensional Euclidean space are weakly contractible, and so this space of embeddings is a model for the universal principal $\Diff_\partial(F)$-space. Hence, one model for the Borel construction in Definition \ref{defn:BorelConstModel} is
$$\mathcal{M}^\theta(F;\ell_{\partial F}) := (\Emb_\partial(F, (-\infty,0] \times \bR^\infty ; e_0) \times \Bun_\partial(TF, \theta^*\gamma_2 ; \ell_{\partial F}))/\Diff_\partial(F).$$
If we consider $\partial F$ as being a submanifold of $\bR^\infty$ via the embedding $e_0$, then there is a continuous map
\begin{align*}
\Emb_\partial(F, (-\infty,0] \times \bR^\infty ; e_0) \times \Bun_\partial(TF, \theta^*\gamma_2 ; \ell_{\partial F}) &\lra \mathcal{N}^\theta(\partial F, \ell_{\partial F})\\
(e, \ell) &\longmapsto (e(F), \ell \circ (De)^{-1}),
\end{align*}
which is constant on $\Diff_\partial(F)$-orbits, and so induces a continuous map
$$\mathcal{M}^\theta(F;\ell_{\partial F}) \lra \mathcal{N}^\theta(\partial F, \ell_{\partial F}).$$
By the definition in \cite[\S 2]{GR-W} of the topology on $\Psi_\theta$, it follows that this map is a homeomorphism onto a collection of path components.

Thus, an alternative description of $\mathcal{N}^\theta(P)$ is
$$\mathcal{N}^\theta(P) \cong \coprod_{[F]} \mathcal{M}^\theta(F;\ell_P)$$
where the disjoint union is taken over all surfaces with boundary identified with $P$, one in each relative diffeomorphism class. The subspace $\mathcal{M}^\theta(P) \subset \mathcal{N}^\theta(P)$ is given by a similar formula, where the disjoint union is taken over all connected surfaces with boundary identified with $P$, one in each relative diffeomorphism class.

\begin{rem}\label{rem:Orient1}
If $X$ and $Y$ are connected non-orientable surfaces with boundary, and $\phi : \partial X \to \partial Y$ is a diffeomorphism between their boundaries, then $\phi$ extends to a diffeomorphism $\hat{\phi} : X \to Y$ if and only if $X$ and $Y$ are diffeomorphic. The same is not true for orientable surfaces: if we choose an orientation of $X$, restrict it to $\partial X$, and hence obtain an orientation of $\partial Y$ using $\phi$, this orientation need not extend to $Y$. The diffeomorphism $\phi$ extends, however, if and only if this orientation extends and $X$ and $Y$ are diffeomorphic.

This accounts for the requirement in Definition \ref{defn:ConnOrientGenus} (\ref{it:defn:ConnOrientGenus:3}) that $\theta^*\gamma_2$ be orientable. Under this hypothesis we may as well choose an orientation of $\theta^*\gamma_2$, which then induces a canonical orientation of any $\theta$-manifold. In particular the boundary condition $\ell_P$ induces an orientation of $P$, and any $X \in \mathcal{N}^\theta(P)$ has an orientation compatible with that of $P$. Thus any two $X, Y \in \mathcal{N}^\theta(P)$ which are connected and have the same genus are diffeomorphic relative to $P$.
\end{rem}

In particular, the spaces in Definition \ref{defn:ConnOrientGenus} may also be described as
$$\mathcal{M}^\theta(g,-;P) \cong \mathcal{M}^\theta(S_{g,b} ; \ell_{P})$$
when the 1-dimensional $\theta$-manifold $P$ consists of $b$ circles, and an identification $P \cong \partial S_{g,b}$ is chosen, and in the oriented case
$$\mathcal{M}^\theta(g,+;P) \cong \mathcal{M}^\theta(\Sigma_{g,b} ; \ell_{P})$$
when the 1-dimensional $\theta$-manifold $P$ consists of $b$ circles, and an identification $P \cong \partial \Sigma_{g,b}$ is chosen.

From our point of view, the models $\mathcal{M}^\theta(g,\pm;P)$ of Definition \ref{defn:ConnOrientGenus} are more convenient than the Borel construction model, because they do not rely on a choice of model surface of type $S_{g,b}$ and $\Sigma_{g,b}$. In particular, it is easier to construct stabilisation maps between them, which we shall now do.

\subsection{Stabilisation maps and the cobordism category}\label{sec:StabMapsCobCat}

If $P$ and $P' \subset \bR^\infty$ are two compact closed 1-dimensional $\theta$-manifolds, and $K \subset [0,k] \times \bR^\infty$ is a 2-dimensional $\theta$-manifold which agrees with $[0,k] \times P$ near $\{0\} \times \bR^\infty$ and with $[0,k] \times P'$ near $\{k\} \times \bR^\infty$, then there is a continuous map
\begin{equation}\label{eq:StabMap}
\begin{aligned}
K_* : \mathcal{N}^\theta(P) & \lra \mathcal{N}^\theta(P')\\
X & \longmapsto (X \cup K) - k \cdot e_0.
\end{aligned}
\end{equation}

This may be used to construct a functor defined on the cobordism category of Galatius--Madsen--Tillmann--Weiss \cite{GMTW}. Let us recall its definition (which we have slightly modified to suit our needs).

\begin{defn}
Let $\mathcal{C}_\theta$ be the category enriched in topological spaces with
\begin{enumerate}[(i)]
\item objects given by 1-dimensional closed $\theta$-submanifolds $P \subset \bR^\infty$,

\item non-identity morphisms from $P$ to $P'$ given by pairs $(t,W)$ where $t \in (0,\infty)$ and $W \subset [0,t] \times \bR^\infty$ is a $\theta$-surface which agrees with $[0,t] \times P$ near $\{0\} \times \bR^\infty$ and with $[0,t] \times P'$ near $\{t\} \times \bR^\infty$.
\end{enumerate}
There is an injective function
\begin{align*}
\mathcal{C}_\theta(P,P') &\lra \bR \times \Psi_\theta\\
(t,W) & \longmapsto (t, ((-\infty,0] \times P) \cup W \cup ([t,\infty) \times P'))
\end{align*}
and we give $\mathcal{C}_\theta(P,P')$ the subspace topology. Composition in this category is given by the formula
$$(t', W') \circ (t, W) := (t+t', W \cup (W' + t \cdot e_0)).$$
\end{defn}

The construction $P \mapsto \mathcal{N}^\theta(P)$ defines a functor $\mathcal{N}^\theta : \mathcal{C}_\theta \to \mathbf{Top}$, where a cobordism $(t,W) : P \leadsto P'$ induces the map
\begin{align*}
W_* : \mathcal{N}^\theta(P) &\lra \mathcal{N}^\theta(P')\\
X &\longmapsto (X \cup W) - t \cdot e_0 .
\end{align*}

\begin{rem}\label{rem:QuasiIso}
The category $\mathcal{C}_\theta$ has no isomorphisms except for identity maps. However, the cobordisms $[0,t] \times P : P \leadsto P$ induce endomorphisms of $\mathcal{N}^\theta(P)$ which are homotopic to the identity: they merely add an external collar to a nullbordism, which can be shrunk down. As such, it is natural to think of these morphisms as being honorary identity maps, and so it is natural to think of their path-component in $\mathcal{C}_\theta(P, P)$ as consisting of isomorphisms, because they induce homotopy equivalences on $\mathcal{N}^\theta(-)$. 

Thus, given morphisms $(t,W) :P \leadsto P'$ and $(t',W') : P' \leadsto P$ such that $W' \circ W$ and $W \circ W'$ are both isotopic to cylindrical cobordisms, we shall say that $W$ is a \emph{quasi-isomorphism} in $\mathcal{C}_\theta$, with \emph{quasi-inverse} $W'$.
\end{rem}

If $W$ is path-connected relative to $P$, then the gluing construction sends connected surfaces to connected surfaces, and so induces a map
$$W_* : \mathcal{M}^\theta(P) \lra \mathcal{M}^\theta(P').$$
Restricting further to the subspace of connected non-orientable surfaces of genus $g$, gluing on the cobordism $W$ has a definite effect on the genus of the surface, and this effect depends on the genus of the components of $W$ and the combinatorics of which path components of $P$ lie in which path components of $W$. Thus it induces a map
$$W_* : \mathcal{M}^\theta(g,-;P) \lra \mathcal{M}^\theta(g',-;P')$$
for some $g'$ wich can be computed from $g$ and $W$. Similarly, in the orientable case (where, recall, we suppose that $\theta^*\gamma_2$ is orientable) there is an induced map
$$W_* : \mathcal{M}^\theta(g,+;{P}) \lra \mathcal{M}^\theta(g',+;{P}')$$
for some $g'$ which can be computed from $g$ and $W$. There are four basic cases which we shall consider.

\begin{enumerate}[(i)]
\item Suppose that $W$ is an orientable cobordism which has a single 1-handle relative to $P$, which is attached to two distinct path components of $P$. Then the map induced by ${W}$ has the effect of gluing on a pair of pants along the legs, which increases the genus by 1 in the orientable case, and by 2 in the non-orientable case. We call such stabilisation maps \emph{maps of type $\alpha$}.

\item Suppose that $W$ is an orientable cobordism which has a single 1-handle relative to $P$, both ends of which are attached to a single path component of $P$. Then the map induced by ${W}$ has the effect of gluing on a pair of pants along the waist, which does not increase the genus. We call such stabilisation maps \emph{maps of type $\beta$}.

\item Suppose that $W$ is a cobordism which has a single 2-handle relative to $P$, which is necessarily attached along an entire path component of $P$. Then the map induced by ${W}$ has the effect of gluing on a disc, which does not increase the genus. We call such stabilisation maps \emph{maps of type $\gamma$}.

\item Suppose that $W$ is a non-orientable cobordism which has a single 1-handle relative to $P$ (both ends of which are then necessarily attached to a single path component of $P$, along coherently oriented intervals). Then the map induced by ${W}$ has the effect of gluing on a projective plane with two discs removed, which increases the (non-orientable) genus by 1. We call such stabilisation maps \emph{maps of type $\mu$}.
\end{enumerate}

\subsection{Path components}\label{sec:PathComponents}

If we work with the Borel construction model, it is immediate that the set of path components of $\mathcal{M}^\theta(F;\ell_{\partial F})$ is identified with the orbit set
$$\pi_0(\mathcal{M}^\theta(F;\ell_{\partial F})) = \pi_0(\Bun_\partial(TF, \theta^*\gamma_2;\ell_{\partial F})) / \Gamma(F),$$
where $\Gamma(F) := \pi_0(\Diff_\partial(F))$.

\section{A flexible model for moduli spaces of surfaces}\label{sec:FlexModel}

In this section we shall introduce a generalisation of the spaces $\mathcal{N}^\theta(P)$, which are designed so that we have stabilisation maps analogous to \eqref{eq:StabMap} but for cobordisms starting from a codimension 0 subset $Q \subset P$. To do this carefully, we must control the $\theta$-structure near the boundary of $Q$, and to do this we make the following definition.

\begin{defn}[Standard $\theta$-structure]
Choose once and for all a map $\ell_{std} : \bR^2 \to \theta^*\gamma_2$ which is a linear isomorphism to a single fibre. If $P$ is a 1-manifold with a nowhere vanishing vector field $\xi : \epsilon^1 \overset{\sim}\to TP$ then the \emph{standard $\theta$-structure on $P$} is
$$\epsilon^1 \oplus TP \overset{\mathrm{Id} \oplus \xi^{-1}}\lra \epsilon^1 \oplus \epsilon^1 \overset{proj}\lra \bR^2 \overset{\ell_{std}}\lra \theta^*\gamma_2.$$
\end{defn}

In the following, the manifold $\bR \times \{\pm \tfrac{1}{2}\} \times \{0\}^{\infty-2} \subset \bR^\infty$ will play a distinguished role, and we shall always take it to have the $\theta$-structure induced by the vector field given by the first coordinate direction on $\bR \times \{\tfrac{1}{2}\}$ and by minus the first coordinate direction on $\bR \times \{-\tfrac{1}{2}\}$.

Let us write $\bI := [-1,1] \subset \bR$, and
$$\bA := \bR^\infty \setminus \mathrm{int}(\bI^\infty)$$
for the ``annular" region obtained by removing the cube of radius 1 from $\bR^\infty$.  

\begin{defn}
An \emph{inner boundary condition} $Q \subset \bI^\infty$ is a 1-dimensional compact $\theta$-manifold which agrees with $\bI^\infty \cap(\bR \times \{\pm \tfrac{1}{2}\} \times \{0\}^{\infty-2})$ as a $\theta$-manifold near $\partial \bI^\infty$, and whose boundary is precisely the four points $\{(\pm 1, \pm \tfrac{1}{2})\}$.

An \emph{outer boundary condition} $L \subset \bA$ is a 1-dimensional compact $\theta$-manifold which agrees with $\bA \cap(\bR \times \{\pm \tfrac{1}{2}\} \times \{0\}^{\infty-2})$ as a $\theta$-manifold near $\partial \bA$, and whose boundary is precisely the four points $\{(\pm 1, \pm \tfrac{1}{2})\}$.
\end{defn}

\begin{defn}
Given an inner boundary condition $Q$, an outer boundary condition $L$, and a $t \in \bR$, let $\mathcal{N}^\theta_L(t,Q)$ be the set of those $\theta$-surfaces inside
$$\mathcal{U}_t := ((-\infty,0] \times \bA) \cup ((-\infty,t] \times \bI^\infty)$$
which are compact, and agree with $((-\infty,0] \times L) \cup ((-\infty,t] \times Q) \subset \mathcal{U}_t$ near $\partial \mathcal{U}_t$. There is an injective function
\begin{align*}
\mathcal{N}^\theta_L(t,Q) & \lra \Psi_\theta\\
X & \longmapsto X \cup ([0,\infty) \times L) \cup ([t,\infty) \times Q) 
\end{align*}
and we topologise $\mathcal{N}^\theta_L(t,Q)$ as a subspace.
\end{defn}

\begin{figure}[h]
\centering
\includegraphics[bb = 0 0 343 102]{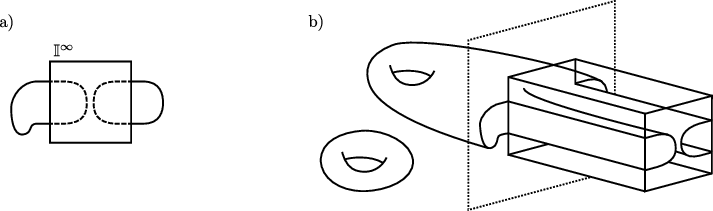}
\caption{a) an example of an inner boundary condition $Q$ (dashed) and an outer boundary condition $L$ (solid), in $\bR^2 \subset \bR^\infty$; b) an example of an element of $\mathcal{N}^\theta_L(t,Q)$ with $t > 0$, in $\bR^3 \subset \bR \times \bR^\infty$.}
\label{fig:FlexModel}
\end{figure}

The spaces $\mathcal{N}^\theta_L(t,Q)$ are functorial in two ways. Firstly, if $W \subset [t,t'] \times \bI^\infty$ is a $\theta$-surface which agrees with $[t,t'] \times (\bR \times \{\pm \tfrac{1}{2}\} \times \{0\}^{\infty-2})$ near $[t,t'] \times \partial \bI^\infty$, with $[t,t'] \times Q$ near $\{t\} \times \bI^\infty$, and with $[t,t'] \times Q'$ near $\{t'\} \times \bI^\infty$, then there is an induced map
\begin{align*}
W_* : \mathcal{N}^\theta_L(t,Q) &\lra \mathcal{N}^\theta_L(t',Q')\\
X & \longmapsto X \cup W.
\end{align*}
We call $W$ an \emph{inner cobordism}, or \emph{cobordism of inner boundary conditions}, from $Q$ to $Q'$.

Secondly, if $R \subset [-s,0] \times \bA$ is a $\theta$-surface which agrees with $[-s,0] \times (\bR \times \{\pm \tfrac{1}{2}\} \times \{0\}^{\infty-2})$ near $[-s,0] \times \partial \bA$, with $[-s,0] \times L'$ near $\{-s\} \times \bA$, and with $[-s,0] \times L$ near $\{0\} \times \bA$ then there is an induced map
\begin{align*}
R^* : \mathcal{N}^\theta_{L'}(t+s,Q) &\lra \mathcal{N}^\theta_L(t,Q)\\
X & \longmapsto (X-s\cdot e_0) \cup R.
\end{align*}
We call $R$ an \emph{outer cobordism}, or \emph{cobordism of outer boundary conditions}, from $L'$ to $L$.

When $L$ and $Q$ are a pair of an outer and an inner boundary condition, we write $L_Q \subset \bR^\infty$ for the 1-dimensional closed $\theta$-manifold $L \cup Q$. There is then an identification $\mathcal{N}^\theta(L_Q) = \mathcal{N}^\theta_L(0,Q)$, as well as inclusions
\begin{equation*}
\begin{aligned}
\iota : \mathcal{N}_L^\theta(0,Q) &\lra \mathcal{N}_L^\theta(t,Q) \\
X &\longmapsto X \cup ([0,t] \times Q)
\end{aligned}
\quad \quad \quad
\begin{aligned}
\iota : \mathcal{N}_L^\theta(-t,Q) &\lra \mathcal{N}_L^\theta(0,Q)\\
X & \longmapsto X \cup ([-t,0] \times Q)
\end{aligned}
\end{equation*}
for each $t \geq 0$.

\begin{lem}\label{lem:ShrinkingHty}
These inclusions are homotopy equivalences.
\end{lem}
\begin{proof}
Let us consider the first case, and define a map in the reverse direction by the formula
$$r:X \mapsto X \cup ([0,t] \times L) - t\cdot e_0: \mathcal{N}_L^\theta(t,Q) \lra \mathcal{N}_L^\theta(0,Q).$$
The composition $r \circ \iota$ is then simply given by 
$$X \longmapsto X \cup ([0,t] \times (L \cup Q)) - t \cdot e_0,$$
which is homotopic to the identity via $(s, X) \mapsto X \cup ([0,s\cdot t] \times (L \cup Q)) - s\cdot t \cdot e_0$ for $s \in [0,1]$. The composition $\iota \circ r$ can be treated similarly.
\end{proof}

An essential feature of the two functorialities described above is that they commute with each other: for an inner cobordism $W$ and an outer cobordism $R$ the square
\begin{equation*}
\xymatrix{
\mathcal{N}^\theta_{L'}(t+s,Q) \ar[d]^-{R^*} \ar[rr]^-{(W+s\cdot e_0)_*}& & \mathcal{N}^\theta_{L'}(t'+s,Q') \ar[d]^-{R^*}\\
\mathcal{N}^\theta_L(t,Q) \ar[rr]^-{W_*}&& \mathcal{N}^\theta_L(t',Q')
}
\end{equation*}
strictly commutes (not just up to homotopy). This is the principal reason for introducing this more flexible model, as this property will be essential later. Apart from this, these two forms of stabilisation should not be considered as very different. For example, it is easy to see that we may always replace gluing on an inner cobordism by gluing on an outer cobordism, up to applying the homotopy equivalences of Lemma \ref{lem:ShrinkingHty} and gluing on quasi-isomorphisms (cf.\ Remark \ref{rem:QuasiIso}). Similarly, in the following section we shall show (in Lemma \ref{lem:EltStab}) that any cobordism having a single 1-handle may, up to homotopy, be factored as a sequence of quasi-isomorphisms and an inner cobordism of a very particular form.

\subsection{Elementary stabilisation maps}

Maps of type $\alpha$, $\beta$ and $\mu$, as defined at the end of Section \ref{sec:StabMapsCobCat}, are all given by cobordisms which have a single relative 1-handle. In this section we introduce versions of these maps in the flexible model given in the last section. It is these versions of the stabilisation maps that we will typically work with.

\begin{defn}\label{defn:EltStab}
An \emph{elementary stabilisation map} is a cobordism ${W} : (t,{Q}) \leadsto (t',{Q}')$ of inner boundary conditions such that
\begin{enumerate}[(i)]
\item ${Q}$ consists of a pair of oriented intervals, which join $(-1, \tfrac{1}{2})$ with $(1, \tfrac{1}{2})$ and $(-1, -\tfrac{1}{2})$ with $(1, -\tfrac{1}{2})$, as shown in Figure \ref{fig:EltStab} a), and 

\item $W$ has a single 1-handle relative to $Q$ which is attached as shown in Figure \ref{fig:EltStab} b).
\end{enumerate}
Under these conditions, the outgoing boundary condition ${Q}'$ will again consist of a pair of oriented intervals, which join $(-1, \tfrac{1}{2})$ with $(-1, -\tfrac{1}{2})$ and $(1, -\tfrac{1}{2})$ with $(1, \tfrac{1}{2})$, as shown in Figure \ref{fig:EltStab} c).

\begin{figure}[h]
\centering
\includegraphics[bb = 0 0 259 60]{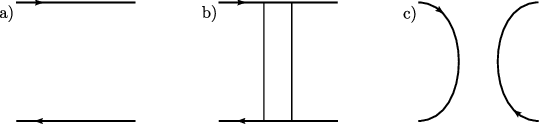}
\caption{}
\label{fig:EltStab}
\end{figure}

If we wish to emphasise this structure, we write ${Q}_{\sticks}$ and ${Q}'_{\cross}$, where the subscript records the combinatorics of how the intervals connect up the four points $(\pm 1, \pm\tfrac{1}{2})$.
\end{defn}

\begin{lem}\label{lem:EltStab}
If $(t,M) : P \leadsto P' \in \mathcal{C}_\theta$ is a cobordism having a single 1-handle relative to $P$, there there are quasi-isomorphisms (cf.\ Remark \ref{rem:QuasiIso})
$$(1,M_0) : A \leadsto P \quad\text{ and }\quad (1,M_1) : P' \leadsto B$$
in $\mathcal{C}_\theta$ such that
\begin{enumerate}[(i)]
\item $A$ and $B$ agree outside $\mathrm{int}(\bI^\infty)$; call this common submanifold $L$,

\item there is a path in $\mathcal{C}_\theta(A,B)$ from $M_1 \circ M \circ M_0$ to $([0, 2+t] \times L) \cup W$ for some elementary stabilisation map ${W} : (0,{Q}_{\sticks}) \leadsto (2+t,{Q}'_{\cross})$.
\end{enumerate}
\end{lem}
\begin{proof}
It will be important to distinguish $\theta$-manifolds from manifolds without a given $\theta$-structure, so we revert to denoting $\theta$-manifolds by $(X, \ell_X)$ for this proof.

Let $\phi : \{\pm1\} \times \bI \hookrightarrow P$ be an attaching map for the 1-handle of $M$ relative to $P$. We may isotope $(P, \ell_P)$ so that the handle attachment map is now
$$\phi' : (s, x) \mapsto (\tfrac{s}{2}, x, 0, \ldots): \{\pm1\} \times \bI \lra \bI^\infty \subset \bR^\infty$$
and $P$ intersects $\bI^\infty$ only in the set $\{\pm\tfrac{1}{2}\} \times \bI$. (Thus, the handle will be attached as in Figure \ref{fig:EltStab} b).) Let $(A, \ell_A)$ be this new object of $\mathcal{C}_\theta$, and $(1, (M_0, \ell_{M_0})) : (A, \ell_A) \leadsto (P, \ell_P)$ be the quasi-isomorphism given by the isotopy. Let $(L, \ell_L)$ be the $\theta$-manifold obtained by intersecting $(A, \ell_A)$ with $\bA$.

Let ${W} \subset [0, 1] \times \bI^\infty$ be an elementary stabilisation map (without $\theta$-structure!) given as the trace of the surgery $\phi'$ on ${Q} = \{\pm\tfrac{1}{2}\} \times \bI$, and write ${Q}'$ for the outgoing boundary. (If we unbend the corners, $W$ is just a disc.) Gluing this cobordism to $[0,1] \times L$ gives a cobordism
$$([0,1] \times L) \cup W : (0, L_Q) \leadsto (1,L_{Q'})$$
without $\theta$-structure. The incoming boundary $L_Q=A$ has a $\theta$-structure, $\ell_A$. We may change $(t,(M, \ell_M)) \circ (1,(M_0, \ell_{M_0})) \in \mathcal{C}_\theta(A, P')$ by an isotopy to obtain a $\theta$-cobordism $(1+t,(X, \ell_X)) : A \leadsto P'$ such that $X$ contains $([0, 1] \times L) \cup W$ as a subset. Furthermore, we may suppose that restricted to $[0,1] \times L$, $\ell_X$ agrees with the $\theta$-structure induced by $\ell_L$. Let $\ell_W := \ell_X\vert_{W}$, and let $Q'$ have the induced $\theta$-structure.

We have expressed $(1+t, (X, \ell_X))$ as a factorisation $(t,(Y, \ell_Y)) \circ (1,([0,1] \times L, \ell_L) \cup (W, \ell_W))$. But as $M$ and $W$ both only have a single relative 1-handle, it follows that $Y$ is a trivial cobordism, an in particular $(t,(Y, \ell_Y))$ is a quasi-isomorphism. If we let $(s,(M_1, \ell_{M_1}))$ be a quasi-inverse to $(t, (Y, \ell_Y))$, we find that the $\theta$-cobordisms
$$(s+t, [0,s+t] \times (L_{Q'})) \circ (1,([0,1] \times L) \cup W) \quad \text{   and   }\quad (s,M_1) \circ (t,M) \circ (1,M_0)$$
are in the same path component of $\mathcal{C}_\theta(A,L_{Q'})$, as required.
\end{proof}

By the above lemma, to prove homological stability for all maps of type $\alpha$, $\beta$, and $\mu$ it will be enough to do so only for those cobordisms of the form $([0,1] \times L) \cup W$ for some outer boundary condition $L$ and some elementary stabilisation map ${W}$. 

Let us revisit three of the stabilisation maps from Section \ref{sec:StabMapsCobCat}, $\alpha$, $\beta$, and $\mu$, from the point of view of the elementary stabilisation maps of Definition \ref{defn:EltStab} and the basic result Lemma \ref{lem:EltStab}. Firstly, following Definition \ref{defn:ConnOrientGenus}, we may define
$$\mathcal{M}^\theta_L(t,Q) \subset \mathcal{N}^\theta_L(t,Q)$$
to be the subspace of those $X$ which are path connected, and
$$\mathcal{M}^\theta_L(g,\pm;t,Q) \subset \mathcal{M}^\theta_L(t,Q)$$
to be the subspace of those $X$ which are of orientability type $\pm$ and of genus $g$. 

If ${W} : {Q}_{\sticks} \leadsto {Q}'_{\cross}$ is an elementary stabilisation map and $L$ is an outer boundary condition, there is an induced map
$$W_* : \mathcal{N}^\theta_L(t,Q) \lra \mathcal{N}^\theta_L(t+1,Q').$$
This restricts to maps on the subspaces of connected non-orientable or orientable surfaces of fixed genus; we record the various possibilities below.

\begin{enumerate}[(i)]
\item Suppose that the two intervals ${Q} \subset {L}_{{Q}}$ lie in different path components. Then the map induced by ${W}$ has the effect of gluing on a pair of pants along the legs, which gives
$$W_* : \mathcal{M}^\theta_{{L}}(g,+; t, {Q}) \lra \mathcal{M}^\theta_{{L}}(g+1,+; t+1, {Q}')$$
in the orientable case, or
$$W_* : \mathcal{M}^\theta_L(g,-; t,Q) \lra \mathcal{M}_L^\theta(g+2,-; t+1,Q')$$
in the non-orientable case, a map of type $\alpha$.

\item Suppose that the two intervals ${Q} \subset {L}_{{Q}}$ lie in the same path component and are coherently oriented (this is automatic in the case $\theta^*\gamma_2$ is orientable). Then the map induced by ${W}$ has the effect of gluing on a pair of pants along the waist, which gives
$$W_* : \mathcal{M}^\theta_{{L}}(g,+; t,{Q}) \lra \mathcal{M}^\theta_{{L}}(g,+; t+1,{Q}')$$
in the orientable case, or
$$W_* : \mathcal{M}^\theta_L(g,-; t,{Q}) \lra \mathcal{M}^\theta_L(g,-; t+1,{Q}')$$
in the non-orientable case, a map of type $\beta$.

\item Suppose that the two intervals ${Q} \subset L_Q$ lie in the same path component and are oppositely oriented. Then the map induced by $W$ has the effect of gluing on a projective plane with two discs removed, which gives
$$W_* : \mathcal{M}^\theta_L(g,-; t,{Q}) \lra \mathcal{M}_L^\theta(g+1,-; t+1,{Q}'),$$
a map of type $\mu$.
\end{enumerate}

Maps of type $\gamma$, i.e.\ those that glue on a disc, will not be treated using this model. We will deal with them in Section \ref{sec:LastBoundary} by different means.

\section{Resolutions}\label{sec:Resolutions}

In the previous two sections we have defined moduli spaces $\mathcal{N}^\theta(P)$ of $\theta$-surfaces with boundary $P$, and a more flexible model $\mathcal{N}_L^\theta(t, Q)$ for this space when the $\theta$-manifold $P$ arises as $L_Q$ for an outer boundary condition $L$ and an inner boundary condition $Q$. We have defined subspaces $\mathcal{M}^\theta(g,\pm;P) \subset \mathcal{N}^\theta(P)$ consisting of connected surfaces with orientability type $\pm$ and genus $g$, and similar subspaces in the flexible model.

In this section we shall construct, after choosing some auxiliary data (namely an embedding $b : \{\pm 1 \} \times \bR \hookrightarrow L$, and a choice of sign $\tau \in \{+, -\}$) an augmented semi-simplicial space
$$\epsilon : \mathcal{N}^\theta_L(t,Q;b, \tau)_\bullet \lra \mathcal{N}^\theta_L(t,Q)$$
whose $p$-simplices consist of a surface $X \in \mathcal{N}^\theta_L(t,Q)$ along with $(p+1)$ thickened arcs embedded in $X$, starting on $b(\{- 1 \} \times \bR)$ and ending on $b(\{1 \} \times \bR)$, satisfying certain conditions. We shall then show that when restricted to $\mathcal{M}^\theta_L(g,\pm;t,Q) \subset \mathcal{N}^\theta_L(t,Q)$ the fibres of the map $\vert \epsilon \vert$ have a connectivity which increases linearly with $g$, so the restriction of $\epsilon$ provides an increasingly good semi-simplicial resolution of the moduli spaces $\mathcal{M}^\theta_L(g,\pm;t,Q)$.

\begin{defn}\label{defn:ArcCx}
For an outer boundary condition $L$, an inner boundary condition $Q$ and an embedding $b : \{\pm 1 \} \times \bR \hookrightarrow L$, let $\mathcal{N}^\theta_L(t,Q;b, \tau)_0$ denote the set of those pairs $(X;a)$ where $X \in \mathcal{N}^\theta_L(t,Q)$ and $a : \bI \times \bI \hookrightarrow X$ is an embedding, such that
\begin{enumerate}[(i)]
\item $a(\{-1\} \times \bI) \subset b(\{-1\} \times \bR)$ and $a(\{1\} \times \bI) \subset b(\{1\} \times \bR)$, both preserving orientation,

\item the embedding $a$ is collared near $\{\pm 1\} \times \bI$, i.e.\ we have
\begin{align*}
a(1-s, t) &= a(1,t)-s\\
a(-1+s, t) &= a(-1,t) -s
\end{align*}
for all small enough $s \geq 0$,

\item the complement $X \setminus a(\bI \times \bI)$ is connected, and has the same orientability type as $X$ (i.e.\ if $X$ is non-orientable, $X \setminus a(\bI \times \bI)$ is required to be too).
\end{enumerate}
Topologise $\mathcal{N}^\theta_L(t, Q;b, \tau)_0$ as a subspace of $\mathcal{N}^\theta_L(t,Q) \times \Emb(\bI \times \bI, \bR \times \bR^\infty)$.

For $p \geq 0$, let $\mathcal{N}^\theta_L(t, Q;b, \tau)_p$ be the subspace of the $(p+1)$-fold fibre product of $\mathcal{N}^\theta_L(t, Q;b,\tau)_0$ over $\mathcal{N}^\theta_L(t, Q)$ consisting of tuples $(X; a_0, \ldots, a_p)$ such that
\begin{enumerate}[(i)]
\setcounter{enumi}{3}
\item the embeddings $a_i$ have disjoint images,

\item\label{it:ConnOrient} the complement $X \setminus \cup_i a_i(\bI \times \bI)$ is connected, and has the same orientability type as $X$,

\item\label{it:Ordering} the end points of the arcs $a_i(\bI \times \{0\})$ are ordered as
$$a_0(-1,0) < a_1(-1,0) < \cdots < a_p(-1,0)$$
with respect to the standard order on $b(\{-1\} \times \bR)$, and as
$$\begin{cases}
a_0(1,0) < a_1(1,0) < \cdots < a_p(1,0) & \text{ if $\tau=+$},\\
a_0(1,0) > a_1(1,0) > \cdots > a_p(1,0) & \text{ if $\tau=-$},
\end{cases}$$
with respect to the standard order on $b(\{1\} \times \bR)$.
\end{enumerate}
There are face maps $d_i : \mathcal{N}^\theta_L(t, Q;b,\tau)_p \to \mathcal{N}^\theta_L(t, Q;b,\tau)_{p-1}$ given by forgetting $a_i$, giving an augmented semi-simplicial space $\mathcal{N}^\theta_L(t, Q;b,\tau)_\bullet \to \mathcal{N}^\theta_L(t, Q)$.
\end{defn}

If $W \subset [t,t'] \times \bI^\infty$ is an inner cobordism from $(t,Q)$ to $(t',Q')$ then we define
\begin{align*}
W_* : \mathcal{N}^\theta_L(t, Q;b,\tau)_0 & \lra  \mathcal{N}^\theta_L(t', Q';b,\tau)_0\\
(X; a) &\longmapsto (X \cup W ;a)
\end{align*}
The analogous formula defines a map on higher simplices, so the construction
$$(t, Q) \longmapsto \{\epsilon : \mathcal{N}^\theta_L(t, Q;b,\tau)_\bullet \to \mathcal{N}^\theta_L(t, Q)\}$$
is functorial for inner cobordisms.

\begin{prop}\label{prop:AugIsQfib}
The map $\vert\epsilon\vert : \vert \mathcal{N}^\theta_L(t, Q;b,\tau)_\bullet \vert \to \mathcal{N}^\theta_L(t, Q)$ is a locally trivial fibre bundle.
\end{prop}

\begin{proof}
The fibre of the augmented semi-simplicial space $\epsilon :  \mathcal{N}^\theta_L(t,Q;b,\tau)_\bullet  \to \mathcal{N}^\theta_L(t,Q)$ over $X$ is the semi-simplicial space $A(X)_\bullet$ with $p$-simplices the space of $(p+1)$-tuples of embeddings $e : \bI \times \bI \hookrightarrow X$ satisfying the conditions described in Definition \ref{defn:ArcCx}. As such, the group $\Diff_\partial(X)$ acts on $A(X)_\bullet$ levelwise. If we let $\Emb_\partial(X, \mathcal{U}_{t} ; inc)$ be the space of embedding of $X$ into $\mathcal{U}_{t}$ which are equal to the identity embedding near the boundary, equipped with the $C^\infty$-topology, then we can recover that part of $\mathcal{N}^\theta_L(t,Q;b,\tau)_\bullet$ lying over the path component of $X$ (up to homeomorphism) as the semi-simplicial space
$$[p] \longmapsto \left(A(X)_p \times \Emb_\partial(X, \mathcal{U}_{t} ; inc) \times \Bun^\theta_\partial(X;\ell_L \cup \ell_Q)\right)/\,\Diff_\partial(X),$$
using the techniques of Section \ref{sec:BorelConst}. Hence by Lemma \ref{lem:AugTriv} the augmentation map is a locally trivial fibre bundle, with fibre $\vert A(X)_\bullet\vert$ over $X$.
\end{proof}

There are three basic situations that we shall refer to thoughout, which concern the combinatorial situations the 1-manifolds $Q$, $L$, and $b(\{\pm1\} \times \bR)$ can form. The following figures show three cases of how these data can be arranged, and in the rest of our discussion we shall refer to these figures to mean any tuple of data $(Q, L, b)$ which has the combinatorial form shown in these figures. The figures only show those parts of $L$ which touch $\partial \bA$: they should be interpreted as saying that disjoint components of $L$ may be freely added if required.

\begin{figure}[H]
\centering
\includegraphics[bb = 0 0 157 59]{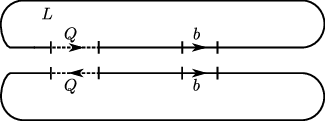}
\caption{The oriented intervals ${Q} \subset L_Q$ lie in different components. One of each of the intervals $b$ lies in each of these components, and one is oriented coherently with ${Q}$, and the other is oriented oppositely to ${Q}$.}\label{fig:BoundaryRes}
\end{figure}

\begin{figure}[H]
\centering
\includegraphics[bb = 0 0 159 33]{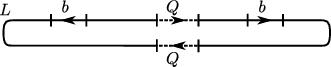}
\caption{The oriented intervals ${Q} \subset L_Q$ lie in a single component and are coherently oriented. The intervals $b$ lie in the same component, have opposite orientations, and separate ${Q}$.}\label{fig:HandleRes}
\end{figure}

\begin{figure}[H]
\centering
\includegraphics[bb = 0 0 156 61]{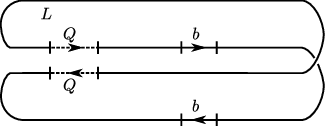}
\caption{The oriented intervals ${Q} \subset L_Q$ lie in a single component and are oppositely oriented. The intervals $b$ lie in the same component, are coherently oriented, and do not separate ${Q}$.}\label{fig:ProjRes}
\end{figure}

\begin{thm}\label{thm:ResConnectivity1}
Let $X \in \mathcal{N}^\theta_L(t, Q)$, and let $F_X$ denote the homotopy fibre of the map $\vert\epsilon\vert$ over $X$.
\begin{enumerate}[(i)]
\item If $X$ is connected, orientable, and has genus $g$, the data $(Q,L,b)$ is as in Figure \ref{fig:BoundaryRes}, and $\tau=+$, then $F_X$ is $(g-2)$-connected.

\item If $X$ is connected, orientable, and has genus $g$, the data $(Q,L,b)$ is as in Figure \ref{fig:HandleRes}, and $\tau=+$, then $F_X$ is $(g-2)$-connected.

\item If $X$ is connected, non-orientable, and has genus $g$, the data $(Q,L,b)$ is as in Figure \ref{fig:ProjRes}, and $\tau=-$, then $F_X$ is $(\lfloor \tfrac{g-1}{3}\rfloor-1)$-connected.
\end{enumerate}
\end{thm}
\begin{proof}
By Proposition \ref{prop:AugIsQfib}, and its proof, the fibre over $X$ is the geometric realisation of the semi-simplicial space $A(X)_\bullet$. Let us write $\pi_0A(X)_\bullet$ for the semi-simplicial set obtained as the levelwise sets of path components. We first claim that 
$$\vert A(X)_\bullet \vert \lra \vert \pi_0A(X)_\bullet \vert$$
is a weak homotopy equivalence. We shall show this by showing that in fact it is a levelwise weak homotopy equivalence, i.e.\ that each $A(X)_p$ has contractible path components.

This relies on a theorem of Gramain \cite[Th\'{e}or\`{e}me 5]{Gramain} which we rephrase here: let $F$ be a compact surface with boundary, and $x_0$, $x_1$ be distinct points on $\partial F$. Let $P(([0,1], 0, 1), (F, x_0, x_1))$ denote the space of smooth embeddings $f:[0,1] \to F$ sending $0,1$ to $x_0, x_1$ respectively and being disjoint from the boundary otherwise, equipped with the $C^\infty$ topology. Gramain's theorem is that this space has contractible components.

First define a semi-simplicial space $A'(X)_\bullet$ having $0$-simplices given by collared embeddings $f : \bI \hookrightarrow X$  starting at $b(\{ -1\} \times \bR)$ and ending at $b(\{ 1\} \times \bR)$, having path connected complement of the same orientability type as $X$. A $(p+1)$-tuple $(f_0, \ldots, f_p)$ spans a $p$-simplex if the embeddings $f_i$ are disjoint, the complement $X \setminus \cup_i f_i(\bI)$ is path connected and of the same orientability type as $X$, and the endpoints of the arcs satisfy the ordering criterion of Definition \ref{defn:ArcCx} (\ref{it:Ordering}). There is a semi-simplicial map $A(X)_\bullet \to A'(X)_\bullet$ given on 0-simplices by $e \mapsto e\vert_{\bI \times \{0\}}$ and by the analogous formula on higher simplices, and this is a levelwise weak homotopy equivalence as an arc has a contractible space of thickenings.

To show $A'(X)_p$ has contractible path components, we proceed by induction on $p$. The map
$$A'(X)_0 \lra (b(\{-1\} \times \bR)) \times (b(\{1\} \times \bR))$$
given by $f \mapsto (f(-1), f(1))$ is a fibration over a contractible space, and its fibre over $(x_0, x_1)$ is homeomorphic to Gramain's space of embeddings of an arc in $X$ with fixed endpoints $x_0$ and $x_1$, so has contractible path components.

For $p > 0$, the face map $d_0 : A'(X)_p \to A'(X)_{p-1}$ is a locally trivial fibre bundle by the main result of \cite{Palais} (see \cite{Lima} for a short proof), and the fibre over $(f_1, \ldots, f_p)$ is the space of embedded arcs in $X \setminus \cup_{i=1}^p f_i(\bI)$ with endpoints in certain intervals in the boundary. This is nothing but the space $A'(X \setminus \cup_{i=1}^p f_i(\bI))_0$ for a particular choice of intervals in the boundary, so has contractible path components by the argument above. By inductive hypothesis $A'(X)_{p-1}$ has contractible path components, so $A'(X)_{p}$ does too. This finishes the proof that $\vert A(X)_\bullet \vert \to \vert \pi_0A(X)_\bullet \vert$ is a weak homotopy equivalence. 

To finish the proof of the theorem, we must show that $\vert \pi_0A(X)_\bullet \vert$ has a certain connectivity when $X$, $(Q,L,b)$ and $\tau$ are as in the statement of the theorem. For the remainder of the proof, we will refer to definitions and results in Appendix \ref{app:CxArcs}, which the reader will need to consult. In cases (i) and (ii), there is a map
$$\vert \pi_0A(X)_\bullet \vert \lra B_0(X)$$
to the simplicial complex $B_0(X)$ defined in Section \ref{sec:CxArcsOrientable}, given by taking a simplex $(e_0, \ldots, e_p)$ to the collection of arcs $e_i\vert_{\bI \times \{0\}}$, then isotoping their endpoints in the intervals $b(\{\pm 1\} \times \bR)$ so that their endpoints lie at the centres $b(\{\pm 1\} \times \{0\})$. A path component of $A(X)_p$ is determined by the isotopy classes of the arcs $e_i\vert_{\bI \times \{0\}}$, so this map is a homeomorphism. In Theorem \ref{thm:OrientableComplex} we show that $B_0(X)$ is $(g-2)$-connected when $X$ is connected, orientable, and of genus $g$. (In case (i), where the two marked intervals are on the same boundary component, Ivanov has already shown this connectivity result.)

In case (iii) the same formula as above defines a map
$$\vert \pi_0A(X)_\bullet \vert \lra C_0(X)$$
to the simplicial complex $C_0(X)$ defined in Section \ref{sec:CxArcsNonorientable}, and this map is again a homeomorphism. In Theorem \ref{thm:MobiusComplex}, we show that $C_0(X)$ is $(\lfloor \frac{g-1}{3} \rfloor -1)$-connected when $X$ is connected, non-orientable, and of genus $g$.
\end{proof}

There are two remaining cases we would like the above result for, the analogues of (i) and (ii) for non-orientable surfaces. Unfortunately we do not know how to show that the associated simplicial complexes are highly connected, but using an idea of Wahl we are able to show that they become contractible after stabilising by projective planes. Let us show how this idea may be used.

\begin{defn}\label{defn:StabCob}
Let $L$ be an outer boundary condition, and $b : \{ \pm 1\} \times \bR \hookrightarrow L$ be given. A cobordism of outer boundary conditions $K : (-1,L) \leadsto (0,L')$ is called a \emph{stabilising cobordism for $(L,b)$} if
\begin{enumerate}[(i)]
\item $K$ contains $[-1,0] \times b(\{\pm 1\} \times \bR)$ as a $\theta$-submanifold,

\item $K$ is diffeomorphic relative to its boundary to the manifold formed from $[-1,0] \times L$ by taking the connected sum with $\bR\bP^2$ inside a component which touches the image of the map $b$.
\end{enumerate}
\end{defn}

Given $L$ and $b$, stabilising cobordisms for this data exist: form the ambient connect-sum of $[-1,0] \times L$  with a disjoint embedded copy of $\bR\bP^2$, where the connect-sum is formed disjointly from $[-1,0] \times b(\{\pm 1\} \times \bR) \subset [-1,0] \times L$ and is formed in a component of $[-1,0] \times L$ which touches the image of $b$, and then choose a $\theta$-structure $\ell_K$ on $K$ which agrees with $\ell_L$ on $(\{-1\} \times L) \cup ([-1,0] \times b(\{\pm 1\} \times \bR))$; this induces a new $\theta$-structure $\ell'_L$ on $\{0\} \times L \subset K$, and we call the new outer boundary condition so obtained $L'$. (Note that it is possible to choose such a $\ell_K$: the map $\theta_* : \pi_1(B) \to \pi_1(BO(2))$ must be surjective---or else $\theta^*\gamma_2$ is orientable and so no non-orientable surface admits a $\theta$-structure, making this discussion unnecessary---and so $\theta$-structures can always be extended over a 1-handle with nullhomotopic attaching map, by elementary obstruction theory.)

If we write $L_0 = L$, $L_1 = L'$, and $K_0 = K$, there is a map
\begin{align*}
{K^*_0}: \mathcal{N}^\theta_{L_0}(t,Q) &\lra \mathcal{N}^\theta_{L_1}(t-1,Q)\\
X &\longmapsto (X -e_0) \cup K_0
\end{align*}
which lifts to a map of augmented semi-simplicial spaces
$$(K^*_0)_\bullet : \mathcal{N}^\theta_{L_0}(t,Q;b,\tau)_\bullet \lra \mathcal{N}^\theta_{L_1}(t-1,Q;b,\tau)_\bullet,$$
by extending arcs $a : \bI \times \bI \hookrightarrow X$ cylindrically by $[0,1] \times a(\{\pm 1\} \times \bI)$, and then reparametrising. By iterating this construction (that is, choosing a stabilising cobordism $K_1 : L_1 \leadsto L_2$ for the data $(L_1, b)$, and so on) we obtain a direct system of augmented semi-simplicial spaces, and we can consider
$$\epsilon' : \underset{n \to \infty} \hocolim \mathcal{N}^\theta_{L_{n}}(t-n,Q;b,\tau)_\bullet \lra \underset{n \to \infty} \hocolim \mathcal{N}^\theta_{L_{n}}(t-n,Q),$$
the augmented semi-simplicial space obtained by taking the levelwise homotopy colimits.

\begin{thm}\label{thm:ResConnectivity2}
The homotopy fibre of $\vert\epsilon'\vert$ over a point $X \in \mathcal{N}^\theta_L(t,Q)$ is contractible if either
\begin{enumerate}[(i)]

\item $X$ is connected and non-orientable, the data $(Q,L,b)$ is as in Figure \ref{fig:HandleRes}, and $\tau=+$, or

\item $X$ is connected and non-orientable, the data $(Q,L,b)$ is as in Figure \ref{fig:BoundaryRes}, and $\tau=+$.
\end{enumerate}
\end{thm}
\begin{proof}
The first part of the proof of Theorem \ref{thm:ResConnectivity1} still applies. In case (i), if we let
$$b_- := b(\{-1\} \times \{0\}) \quad\quad\quad b_+ := b(\{1\} \times \{0\})$$
then the usual map gives a homeomorphism
$$\vert \pi_0A(X)_\bullet \vert \lra D_0(X, b_-, b_+)$$
to a simplicial complex which we define in Section \ref{sec:CxArcsNonorientable}. The stabilisation map $(K^*)_\bullet$ described above induces a map
$$D_0(X, b_-, b_+) \lra D_0(Y, b_-, b_+)$$
where $Y \cong X \# \bR\bP^2$. By Theorem \ref{thm:NonorientableHandleComplex} the homotopy colimit of countably many iterations of this construction is contractible, which establishes the theorem in this case. In case (ii) the argument is the same, using the simplicial complex $E_0(X, b_-, b_+)$ and Theorem \ref{thm:NonorientableBoundaryComplex}.
\end{proof}

Recall that we have defined subspaces
$$\mathcal{M}_L^\theta(g,\pm; t,Q)  \subset  \mathcal{N}_L^\theta(t,Q)$$
consisting of those surfaces which are connected and have fixed orientation type and genus. Pulling back $\epsilon : \mathcal{N}^\theta_L(t,Q;b,\tau)_\bullet \to \mathcal{N}^\theta_L(t,Q)$ to these subspaces defines certain augmented semi-simplicial spaces which we shall give their own names and notation, as they will be the principal objects we consider in the remainder of the paper.

\begin{defn}\mbox{}
\begin{enumerate}[(i)]
\item If $(Q, L, b)$ are as in Figure \ref{fig:BoundaryRes} and $\tau=+$, then we write
$$\mathcal{B}^\theta_L(g,\pm; t,{Q};b)_\bullet \lra \mathcal{M}^\theta_L(g,\pm; t,Q)$$
for the pulled back augmented semi-simplicial space, and call it the \emph{boundary resolution}.

\item If $(Q, L, b)$ are as in Figure \ref{fig:HandleRes} and $\tau=+$, then we write
$$\mathcal{H}^\theta_L(g,\pm; t,{Q};b)_\bullet \lra \mathcal{M}_L^\theta(g,\pm; t,Q)$$
for the pulled back augmented semi-simplicial space, and call it the \emph{handle resolution}.

\item If $(Q, L, b)$ are as in Figure \ref{fig:ProjRes} and $\tau=-$, then we write 
$$\mathcal{P}^\theta_L(g,-; t,{Q};b)_\bullet \lra \mathcal{M}^\theta_L(g,-; t,Q)$$
for the pulled back augmented semi-simplicial space, and call it the \emph{projective plane resolution}.
\end{enumerate}
\end{defn}

\begin{rem}
Note that we define (iii) only for non-orientable surfaces: the pullback to $\mathcal{M}^\theta_{{L}}(g,+; t,{Q})$ in this case is empty, as if there is an arc in a surface with endpoints on the same boundary whose endpoints are coherently oriented, then the surface must contain a M{\"o}bius band.
\end{rem}

An important feature of these semi-simplicial resolutions is that elementary stabilisation maps ${W} : (0,{Q}) \leadsto (1,{Q}')$ mix them. The following proposition records the effect of starting with data $(Q, L, b)$ as in Figures \ref{fig:BoundaryRes}, \ref{fig:HandleRes} or \ref{fig:ProjRes} and gluing on an elementary stabilisation map ${W} : (0,{Q}) \leadsto (1,{Q}')$. The proof of this proposition is immediate, by checking the combinatorics in each case.

\begin{prop}\label{prop:StabMapsOnRes}
In the case of orientable surfaces:
\begin{enumerate}[(i)]
\item If $(Q,L,b)$ is as in Figure \ref{fig:BoundaryRes}, then $W$ induces a map of type $\alpha$ which is covered by a map of resolutions $\mathcal{B}^\theta_{{L}}(g,+; 0,{Q};b)_\bullet \to \mathcal{H}^\theta_{{L}}(g+1,+; 1,{Q}';b)_\bullet$.

\item If $(Q,L,b)$ is as in Figure \ref{fig:HandleRes}, then $W$ induces a map of type $\beta$ which is covered by a map of resolutions $\mathcal{H}^\theta_{{L}}(g,+; 0,{Q};b)_\bullet \to \mathcal{B}^\theta_{{L}}(g,+; 1,{Q}';b)_\bullet$.
\end{enumerate}
In the case of non-orientable surfaces:
\begin{enumerate}[(i)]
\setcounter{enumi}{2}
\item If $(Q,L,b)$ is as in Figure \ref{fig:BoundaryRes}, then $W$ induces a map of type $\alpha$ which is covered by a map of resolutions $\mathcal{B}^\theta_{{L}}(g,-; 0,{Q};b)_\bullet \to \mathcal{H}^\theta_{{L}}(g+2,-; 1,{Q}';b)_\bullet$.

\item If $(Q,L,b)$ is as in Figure \ref{fig:HandleRes}, then $W$ induces a map of type $\beta$ which is covered by a map of resolutions $\mathcal{H}^\theta_{{L}}(g,-; 0,{Q};b)_\bullet \to \mathcal{B}^\theta_{{L}}(g,-; 1,{Q}';b)_\bullet$.

\item If $(Q,L,b)$ is as in Figure \ref{fig:ProjRes}, then $W$ induces a map of type $\mu$ which is covered by a map of resolutions $\mathcal{P}^\theta_{{L}}(g,-; 0,{Q};b)_\bullet \to \mathcal{P}^\theta_{{L}}(g+1,-; 1,{Q}';b)_\bullet$.
\end{enumerate}
\end{prop}

\subsection{The layers of the resolutions}\label{sec:Layers}

It will be important for us to understand the homotopy types of $\mathcal{H}^\theta_L(g,\pm; t,Q;b)_p$, $\mathcal{B}^\theta_L(g,\pm; t,Q;b)_p$, and $\mathcal{P}^\theta_L(g,-; t,Q;b)_p$, the spaces in the resolutions that we have just defined, in terms of moduli spaces of surfaces with $\theta$-structure.

\begin{defn}
For an embedding $b : \{\pm1\} \times \bR \hookrightarrow L$, let us write
$$\ell_b : \epsilon^1 \oplus T(\{\pm 1\} \times \bR) \overset{\epsilon^1 \oplus Db}\lra \epsilon^1 \oplus TL \overset{\ell_L}\lra \theta^*\gamma_2.$$
Given in addition a choice of sign $\tau$, let $A_p(t;b,\ell_b, \tau)$ be the space of tuples $(a_0, \ldots, a_p;\ell_0, \ldots, \ell_p)$ of $(p+1)$ embeddings
$$a_i : \bI \times \bI \lra \mathcal{U}_{t} := (-\infty,0] \times \bA) \cup ((-\infty,t] \times \bI^\infty),$$
and $(p+1)$ bundle maps $\ell_i : T(\bI \times \bI) \to \theta^*\gamma_2$, such that
\begin{enumerate}[(i)]

\item $a_i(\{-1\} \times \bI) \subset b(\{-1\} \times \bR)$ and $a_i(\{1\} \times \bI) \subset b(\{1\} \times \bR)$, both preserving orientation,

\item the embeddings $a_i$ are collared near $\{\pm 1\} \times \bI$, i.e.\ we have
\begin{align*}
a_i(1-s, t) &= a_i(1,t)-s\\
a_i(-1+s, t) &= a_i(-1,t) -s
\end{align*}
for all small enough $s \geq 0$,

\item the embeddings $a_i$ have disjoint images,

\item the end points of the arcs $a_i(\bI \times \{0\})$ are ordered as
$$a_0(-1,0) < a_1(-1,0) < \cdots < a_p(-1,0)$$
with respect to the standard order on $b(\{-1\} \times \bR)$, and as
$$\begin{cases}
a_0(1,0) < a_1(1,0) < \cdots < a_p(1,0) & \text{ if $\tau=+$},\\
a_0(1,0) > a_1(1,0) > \cdots > a_p(1,0) & \text{ if $\tau=-$},
\end{cases}$$
with respect to the standard order on $b(\{1\} \times \bR)$,

\item $\ell_i\vert_{\{\pm 1\} \times \bI} = (a_i\vert_{\{\pm 1\} \times \bI})^*(\ell_b)$ for each $i$.
\end{enumerate}
\end{defn}

The collection $A_\bullet(t;b,\ell_b, \tau)$ may be given the structure of a semi-simplicial space, where the $i$th face map forgets the data $(a_i, \ell_i)$, but in fact this structure will not play a role in our use of the spaces $A_p(t;b,\ell_b, \tau)$. Instead, the similarity of the axioms defining $A_p(t;b,\ell_b, \tau)$ with the axioms defining $\mathcal{N}_L^\theta(t,Q;b, \tau)_p$ has been chosen so that there is a map
\begin{align*}
r_p : \mathcal{N}_L^\theta(t,Q;b, \tau)_p &\lra A_p(t;b,\ell_b, \tau)\\
(X;a_0, \ldots, a_p) &\longmapsto (a_0, \ldots, a_p;a_0^*(\ell_X), \ldots, (a_p)^*(\ell_X)).
\end{align*}
Note that if $t \leq t'$ then $A_p(t;b,\ell_b, \tau) \subset A_p(t';b,\ell_b, \tau)$, and the inclusion map is a weak homotopy equvalence. If $W : (t, Q) \leadsto (t', Q')$ in an inner cobordism, then the square
\begin{equation*}
\xymatrix{
\mathcal{N}_L^\theta(t,Q;b, \tau)_p \ar[r]^-{W_*} \ar[d]^-{r_p}& \mathcal{N}_L^\theta(t',Q';b, \tau)_p \ar[d]^-{r_p}\\
A_p(t;b,\ell_b, \tau) \ar@{^(->}[r]^-{\simeq}& A_p(t';b,\ell_b, \tau)
}
\end{equation*}
commutes.

\begin{lem}
The map $r_p : \mathcal{N}_L^\theta(t,Q;b, \tau)_p \to A_p(t;b,\ell_b, \tau)$ is a Serre fibration.
\end{lem}
\begin{proof}
Let us explain why this map has path lifting in the case $p=0$: the argument will clearly extend to the parametrised case and to $p > 0$, with the addition of some notation. 

Let $(X, \ell_X;a) \in \mathcal{N}_L^\theta(t,Q;b, \tau)_0$, and let $f_s = (a_s, \ell_s)$ be a path in $A_0(t;b,\ell_b, \tau)$ starting at $(a_0, \ell_0)=r_0(X, \ell_X;a) = (a, a^*(\ell_X))$. By the isotopy extension theorem, the isotopy $s \mapsto a_s$ of embeddings of $\bI \times \bI$ into $\mathcal{U}_t$ extends to an isotopy $\varphi_s : \mathcal{U}_t \to \mathcal{U}_t$ which is constantly the identity near $\partial \mathcal{U}_t$ and is compactly supported (in other words, its support is a compact subset of the interior of $\mathcal{U}_t$). Then $a_s = \varphi_s \circ a$, and we may define a path
$$g_s = (\varphi_s(X), (D\varphi_s)^{-1} \circ \ell_X; \varphi_s \circ a) \in \mathcal{N}_L^\theta(t,Q;b, \tau)_0.$$
This satisfies $r_0(g_s) = (a_s, (a_s)^*((D\varphi_s)^{-1} \circ \ell_X)) = (a_s, a^*(\ell_X))$, which agrees with $f_s$ in the first coordinate but not necessarily in the second. However, this may be easily fixed, using that the restriction map
$$\rho : \Bun_\partial(TX, \theta^*\gamma_2) \lra \Bun_\partial(T(\bI \times \bI), \theta^*\gamma_2)$$
given by $\rho(\ell) = a^*(\ell)$ is a Serre fibration (as $a$ is a cofibration). If $\hat{\ell}_s$ denotes a lift along $\rho$ of the path $\ell_s$ starting at $\ell_X$, then the path
$$g'_s = (\varphi_s(X), (D\varphi_s)^{-1} \circ \hat{\ell}_s; \varphi_s \circ a) \in \mathcal{N}_L^\theta(t,Q;b, \tau)_0$$
is a lift of $f_s$, as required.
\end{proof}

Let $x=(a_0, \ldots, a_p;\ell_0, \ldots, \ell_p) \in A_p(t;b,\ell_b, \tau)$ be such that all the $a_i$ have image inside $[-1,0] \times \bA$, and choose an outer cobordism \emph{without $\theta$-structure}
$$R \subset [-1,0] \times \bA$$
from $L'$ to $L$ in such a way that $R$
\begin{enumerate}[(i)]

\item contains the images of the $a_i$, and 

\item the inclusion
$$((1-\epsilon,1] \times L) \cup \bigcup_{i=0}^p a_i(\bI \times \bI) \hookrightarrow R$$
is an isotopy equivalence, for $\epsilon$ small enough.
\end{enumerate}
By (ii), there is a $\theta$-structure $\ell_{R_x}$ on $R$ extending that on $L$ and such that $a_i^*\ell_{R_x} = \ell_i$, and moreover $\ell_{R_x}$ is unique up to homotopy of $\theta$-structures having these properties. Choose such an $\ell_{R_x}$, and let $R_x :=(R, \ell_{R_x})$ and $L_x$ be the manifold $L'$ with the $\theta$-structure induced by $\ell_{R_x}$. There is then a map
\begin{equation}\label{eq:model}
\begin{aligned}
\iota : \mathcal{N}^\theta_{L_x}(t+1,Q) &\lra r_p^{-1}(x)\\
X &\longmapsto ((X - e_0) \cup R_x; a_0, \ldots, a_p)
\end{aligned}
\end{equation}
into the fibre over $x$, which is the inclusion of the subspace consisting of those manifolds which contain $R_x$. As any compact family of manifolds in this fibre contains $((1-\epsilon,1] \times L) \cup \bigcup_i a_i(\bI \times \bI)$ for some small $\epsilon$, and by assumption this is isotopy equivalent to $R$, it follows that \eqref{eq:model} is a weak homotopy equivalence. Hence the composition
$$\mathcal{N}^\theta_{L_x}(t+1,Q) \lra r_p^{-1}(x) \lra \mathrm{hofib}_{x}(r_p)$$
is a weak homotopy equivalence, and as every path component of $A_p(t;b,\ell_b, \tau)$ contains a point $x$ satisfying the assumptions above (i.e.\ that all $a_i$ lie inside $[-1,0]\times\bA$) we have identified the homotopy fibre of $r_p$ over each path component of $A_p(t;b,\ell_b, \tau)$.

\begin{prop}\label{prop:MapOnFibres}\mbox{}
\begin{enumerate}[(i)]
\item Each homotopy fibre of $r_p: \mathcal{H}_{{L}}^\theta(g,+;t,{Q};b)_p \to A_p(t;b,\ell_b,+)$ has the homotopy type of $\mathcal{M}^\theta_{{L}_x}(g-p-1,+;t+1,{Q})$,

\item Each homotopy fibre of $r_p:\mathcal{B}_{{L}}^\theta(g,+;t,{Q};b)_p \to A_p(t;b,\ell_b,+)$ has the homotopy type of $\mathcal{M}^\theta_{{L}_x}(g-p,+;t+1,{Q})$,

\item Each homotopy fibre of $r_p:\mathcal{P}_L^\theta(g,-;t,Q;b)_p \to A_p(t;b,\ell_b, -)$ has the homotopy type of $\mathcal{M}^\theta_{L_x}(g-p-1,-;t+1,Q)$,

\item Each homotopy fibre of $r_p:\mathcal{H}_L^\theta(g,-;t,Q;b)_p \to A_p(t;b,\ell_b, +)$ has the homotopy type of $\mathcal{M}^\theta_{L_x}(g-2(p+1),-;t+1,Q)$,

\item Each homotopy fibre of $r_p:\mathcal{B}_L^\theta(g,-;t,Q;b)_p \to A_p(t;b,\ell_b, +)$ has the homotopy type of $\mathcal{M}^\theta_{L_x}(g-2p,-;t+1,Q)$.
\end{enumerate}
\end{prop}

\begin{rem}
In the above, we remind the reader that genus 0 surfaces cannot be non-orientable, so $\mathcal{M}^\theta_{L_x}(0,-;t,Q) = \emptyset$.
\end{rem}

\begin{proof}
Each case follows by restricting the homotopy equivalence \eqref{eq:model} to the intersection of $r_p^{-1}(x)$ with one of the subspaces
$$\mathcal{H}_L^\theta(g,\pm;t,Q;b)_p, \mathcal{B}_L^\theta(g,\pm;t,Q;b)_p, \mathcal{P}_L^\theta(g,\pm;t,Q;b)_p \subset \mathcal{N}_L^\theta(t,Q;b, \pm)_p$$
then a) checking that the surfaces obtained are connected and have the orientability type claimed, and b) checking that the surfaces obtained have the genus claimed. Recall that these spaces are only defined when $Q$ consists of a pair of intervals as in Figure \ref{fig:EltStab} a), and the intervals $Q$ and $b(\{\pm 1\} \times \bR) \subset L_Q$ are compatible in a certain way.

Note that the surfaces obtained are precisely those $X \in \mathcal{N}^\theta_{L_x}(t+1,Q)$ such that
$$(X \cup R;a_0, \ldots, a_p) \in \mathcal{H}_L^\theta(g,\pm;t,Q;b)_p, \mathcal{B}_L^\theta(g,\pm;t,Q;b)_p \text{ or } \mathcal{P}_L^\theta(g,-;t,Q;b)_p,$$
so alternatively they are those obtained by subtracting a copy of $R$ from a connected surface of orientability type $\pm$ and genus $g$. Up to diffeomorphism, we may as well subtract $((1-\epsilon,1] \times L) \cup \bigcup_{i=0}^p a_i(\bI \times \bI)$ from a connected surface containing a copy of $L$ embedded in its boundary.

Part a) thus follows immediately from Definition \ref{defn:ArcCx} (\ref{it:ConnOrient}), where we assumed that subtracting the arcs $a_0, \ldots, a_p$ gave a connected surface of the same orientability type.

For part b), note that as the surfaces obtained are connected we may compute their genus by computing their Euler characteristic: subtracting $(p+1)$ arcs from a surface changes its Euler characteristic by $+(p+1)$, and the conditions imposed on the pair of intervals $b(\{\pm 1\} \times \bR) \subset L_Q$ in the three types of resolution determine how the number of boundary conditions change. In particular
\begin{enumerate}[(i)]
\item For $\mathcal{H}_L^\theta(g,\pm;t,Q;b)_p$ the intervals $b(\{\pm 1\} \times \bR) \subset L_Q$ lie in a single component and are oppositely oriented. By the ordering condition at each end of the arcs, it follows that cutting each arc out creates a new boundary component, and so the genus of the new surface is as claimed.

\item For $\mathcal{B}_L^\theta(g,\pm;t,Q;b)_p$ the intervals $b(\{\pm 1\} \times \bR) \subset L_Q$ lie in different components. Subtracting the first arc reduces the number of boundary components by 1, but by the ordering condition at each end of the arcs subtracting subsequent arcs increases the number of boundary components by 1, so the genus of the surface is as claimed.

\item For $\mathcal{P}_L^\theta(g,-;t,Q;b)_p$ the intervals $b(\{\pm 1\} \times \bR) \subset L_Q$ lie in a single component and are coherently oriented. Thus subtracting each arc preserves the number of boundary components, and so the genus of the new surface is as claimed.\qedhere
\end{enumerate}
\end{proof}

The above discussion in the case of Proposition \ref{prop:StabMapsOnRes} (i), where $W$ gives a map of type $\alpha$, gives a commutative diagram
\begin{equation}\label{eq:ExampleSquare}
\begin{gathered}
\xymatrix{
\mathcal{M}^\theta_{{L}_x}(g,+;0,{Q})  \ar@/_5pc/[ddd]_-{\substack{R^* \\ \text{type } \beta}} \ar[d]^-\simeq \ar[rr]^-{W_* \text{ type } \beta} && \mathcal{M}^\theta_{{L}_x}(g,+;1,{Q}') \ar[d]^-\simeq \ar@/^6pc/[ddd]^-{\substack{R^* \\ \text{type } \alpha}}\\
r_0^{-1}(x) \ar[d] \ar[rr] && r_0^{-1}(x) \ar[d]\\
\mathcal{B}^\theta_{{L}}(g,+; 0,{Q};b)_0 \ar[d] \ar[rr] && \mathcal{H}^\theta_{{L}}(g+1,+; 1,{Q}';b)_0 \ar[d]\\
\mathcal{M}^\theta_{{L}}(g,+;0,{Q}) \ar[rr]^-{W_* \text{ type } \alpha} & &\mathcal{M}^\theta_{{L}}(g+1,+;1,{Q}')
}
\end{gathered}
\end{equation}
where the map induced on fibres over $x \in A_0(0;b,\ell_b,+) \overset{\sim}\to A_0(1;b,\ell_b,+)$ is now a map of type $\beta$. A similar observation can be made in each case covered by Proposition \ref{prop:StabMapsOnRes}, and for simplices of all dimensions. In the following proposition we record what happens in each of the five cases covered by Proposition \ref{prop:StabMapsOnRes}. For simplicity of notation, we write $\iota$ for any of the natural maps $A_p(0;b,\ell_b, \tau) \overset{\sim}\to A_p(1;b,\ell_b, \tau)$; which of these maps we mean will be clear from the context.

\begin{prop}\label{prop:FibResOfPairs}
In the case of orientable surfaces
\begin{enumerate}[(i)]
\item\label{it:FibResOfPairs:ResAlpha} a map $\mathcal{B}^\theta_{{L}}(g,+; 0,{Q};b)_p \to \mathcal{H}^\theta_{{L}}(g+1,+; 1,{Q}';b)_p$ arising from resolving an elementary stabilisation map of type $\alpha$ has induced map on fibres over $\iota$ homotopy equivalent to an elementary stabilisation map
$$\mathcal{M}^\theta_{{L}_x}(g-p,+;0,{Q}) \lra \mathcal{M}^\theta_{{L}_x}(g-p,+;1,{Q}')$$
of type $\beta$,

\item\label{it:FibResOfPairs:ResBeta} a map $\mathcal{H}^\theta_{{L}}(g,+; 0,{Q};b)_p \to \mathcal{B}^\theta_{{L}}(g,+; 1,{Q}';b)_p$ arising from resolving an elementary stabilisation map of type $\beta$ has induced map on fibres over $\iota$ homotopy equivalent to an elementary stabilisation map
$$\mathcal{M}^\theta_{{L}_x}(g-p-1,+;0,{Q}) \lra \mathcal{M}^\theta_{{L}_x}(g-p,+;1,{Q}')$$
of type $\alpha$,
\end{enumerate}
and in the case of non-orientable surfaces
\begin{enumerate}[(i)]
\setcounter{enumi}{2}
\item\label{it:FibResOfPairs:ResMu} a map $\mathcal{P}^\theta_L(g,-; 0,Q;b)_p \to \mathcal{P}^\theta_L(g+1,-; 1,Q';b)_p$ arising from resolving an elementary stabilisation map of type $\mu$ has induced map on fibres over $\iota$ homotopy equivalent to an elementary stabilisation map
$$\mathcal{M}^\theta_{L_x}(g-p-1,-;0,Q) \lra \mathcal{M}^\theta_{L_x}(g-p,-;1,Q')$$
of type $\mu$,

\item a map $\mathcal{B}^\theta_L(g,-; 0,Q;b)_p \to \mathcal{H}^\theta_L(g+2,-; 1,Q';b)_p$ arising from resolving an elementary stabilisation map of type $\alpha$ has induced map on fibres over $\iota$ homotopy equivalent to an elementary stabilisation map
$$\mathcal{M}^\theta_{L_x}(g-2p,-;0,Q) \lra \mathcal{M}^\theta_{L_x}(g-2p,-;1,Q')$$
of type $\beta$,

\item a map $\mathcal{H}^\theta_L(g,-; 0,Q;b)_p \to \mathcal{B}^\theta_L(g,-; 1,Q';b)_p$ arising from resolving an elementary stabilisation map of type $\beta$ has induced map on fibres over $\iota$ homotopy equivalent to an elementary stabilisation map
$$\mathcal{M}^\theta_{L_x}(g-2(p+1),-;0,Q) \lra \mathcal{M}^\theta_{L_x}(g-2p,-;1,Q')$$
of type $\alpha$.
\end{enumerate}
\end{prop}

When we take $p=0$ in each of these cases there is a commutative diagram analogous to the outer square of \eqref{prop:FibResOfPairs} (indeed, \eqref{prop:FibResOfPairs} shows the case (\ref{it:FibResOfPairs:ResAlpha})). Considered as a map from the top pair of spaces to the bottom pair, we shall call each of the maps of pairs arising in this way an \emph{approximate augmentation map}.

\section{$k$-triviality, stabilisation of $\pi_0$, and stability ranges}\label{sec:kTriviality}

This is the first section of the paper in which we must directly confront properties of $\theta$-surfaces, and it is rather technical. To aid the reader we first briefly outline what we are trying to achieve, in the case of orientable surface (though we will also treat non-orientable surfaces).

In the course of the proof of homological stability, we would like to know that all approximate augmentation maps induce the zero map on homology in a range of degrees. Failing this, we would like to know that all sufficiently long compositions of approximate augmentation maps (which we defined at the end of Section \ref{sec:Resolutions}) induce the zero map on homology in a range of degrees. In this section we will define the notion of \emph{$k$-triviality} of a tangential structure $\theta$, for a natural number $k$. Later (in Section \ref{sec:ZeroInHomology}) we will show that if $\theta$ satisfies the property of $k$-triviality then certain compositions of $k$ approximate augmentation maps induce the zero map on homology in a certain range of degrees.

In order to begin the inductive proof of homological stability for the spaces $\mathcal{M}^\theta(g,+;{P})$ we will need to know that their zeroth homology, or equivalently their sets of path components, eventually stabilise. In Section \ref{sec:Pi0StabDefn} we define the notion of $\theta$ \emph{stabilising on $\pi_0$ at genus $h$}, which encodes at which genus (we know that) the path components of moduli spaces of orientable $\theta$-surfaces stabilise.

In all, we associate two invariants, $h \in [0,\infty]$ and $k \in [1,\infty]$, to a tangential structure $\theta$. In practice, the number $h$ is more readily computable, so in Section \ref{sec:FormalKTriviality} we show how to estimate the more complicated $k$ in terms of $h$. We then calculate these two invariants in the basic examples of interest: the trivial tangential structure, orientations, and either of these equipped with maps to a simply connected background space.

The stability theorem for orientable surfaces, that is, for the spaces $\mathcal{M}^\theta(g, +;{P})$, will be expressed in terms of certain functions $F, G : \bZ \to \bZ$ which will describe the stability range for $\alpha$ and $\beta$ type maps respectively. These functions depend on the parameters $h$ and $k$, and in Section \ref{sec:StabRangeOrientable} we describe a procedure for constructing them (as well as certain auxiliary functions) from these parameters.

\subsection{$k$-triviality}\label{sec:kTrivialityDefn}

Consider the subspace
$$\bB := \bI^\infty+2\cdot e_1 \subset \bA \subset \bR^\infty,$$
where our convention is that $e_1$ denotes the first basis vector in $\bR^\infty$. Let us say that a 1-dimensional $\theta$-manifold $P \subset \bR^\infty$ is \emph{standard on $\bB$} if it agrees near $\bB$ with the 1-dimensional $\theta$-manifold $\bR \times \{\pm \tfrac{1}{2}\} \times \{0\}^{\infty-2}$. Similarly, say a 2-dimensional $\theta$-cobordism $W \subset [t,t'] \times \bR^\infty$ is \emph{standard on $\bB$} if it agrees near $[t,t'] \times \bB$ with the  $\theta$-manifold $[t,t'] \times(\bR \times \{\pm \tfrac{1}{2}\} \times \{0\}^{\infty-2})$. We will continue to use these terms for manifolds which are only defined inside $\bA$. We also define the 1-dimensional $\theta$-manifold
$$T := \bB \cap (\bR \times \{\pm \tfrac{1}{2}\} \times \{0\}^{\infty-2}).$$

Let us write $\mathcal{C}_{\theta, T} \subset \mathcal{C}_\theta$ for the subcategory of the cobordism category with objects those $P$ such that $P$ is standard on $\bB$, and with morphisms those $(t,W) : P \leadsto P'$ which are standard on $\bB$. 

If $L \subset \bA$ is an outer boundary condition which is standard on $\bB$, and $Q \subset \bI^\infty$ is an inner boundary condition, then $L_Q$ is standard on $\bB$ and so $L_{Q} \in \mathcal{C}_{\theta, T}$.

\begin{defn}\label{defn:absorbs}
Let $W : (0, Q) \leadsto (1,Q')$ be an inner cobordism, and $U : (0,L') \leadsto (1,L)$ be an outer cobordism which is standard on $\bB$. There are then morphisms
$$M(W, L^{(\prime)}) :=  (1, W \cup ([0,1] \times L^{(\prime)})) : L^{(\prime)}_{Q} \leadsto L^{(\prime)}_{Q'}$$
and
$$M(Q^{(\prime)}, U) :=  (1,([0,1] \times Q^{(\prime)}) \cup U) : L_{Q^{(\prime)}} \leadsto L'_{Q^{(\prime)}}$$
in $\mathcal{C}_{\theta, T}$. Say that \emph{$U$ absorbs $W$} if there is a morphism
$$(1,Z) : L'_{Q'} \leadsto L_{Q} \in \mathcal{C}_{\theta, T}$$
such that
\begin{enumerate}[(i)]
\item there is a path from $(1,Z) \circ M(W, L')$ to $M(Q,U) \circ (1,[0,1] \times L'_{Q})$ in the space $\mathcal{C}_{\theta, T}(L'_{Q},L_{Q})$,

\item there is a path from $M(W, L) \circ (1,Z)$ to $(1,[0,1] \times L_{Q'}) \circ M(Q',U)$ in the space $\mathcal{C}_{\theta, T}(L'_{Q'},L_{Q'})$.
\end{enumerate}
\end{defn}

At first sight this may seem like a difficult condition to check, but notice that the spaces of morphisms $\mathcal{C}_{\theta, T}(P, P')$ have the homotopy type of $\mathcal{N}^\theta(R)$, where $R$ is the 1-manifold 
$$(\{0\} \times (P \setminus T)) \cup ([0,1] \times \{\pm 1\} \times \{\pm \tfrac{1}{2}\}) \cup (\{1\} \times (P' \setminus T))$$
(with its corners unbent) equipped with its induced $\theta$-structure. Hence questions about the set of path components $\pi_0\mathcal{C}_{\theta, T}(P, P')$ are just questions about the isomorphism classification of $\theta$-surfaces.

\begin{defn}\label{defn:kTriv}
We call a pair of an elementary stabilisation map ${W} : (0,{Q}_{\sticks}) \leadsto (1,{Q}'_{\cross})$ and an outer cobordism $U : (0,L') \leadsto (1,L)$ which is standard on $\bB$ an \emph{orientable test pair of height $k$} if $U$ has $k$ 1-handles relative to $L$, attached via the restrictions of disjoint embeddings $b_1, \ldots, b_k : \{\pm 1\} \times \bR \hookrightarrow L$ to $\{ \pm1\} \times \bI$, and this data has one of the two combinatorial forms shown in Figure \ref{fig:kTRivOr}. ($L$ may contain several components, but the $b_i$ should only map to those which intersect $\bB$, as shown.)

\begin{figure}[h]
\centering
\includegraphics[bb = 0 0 327 114]{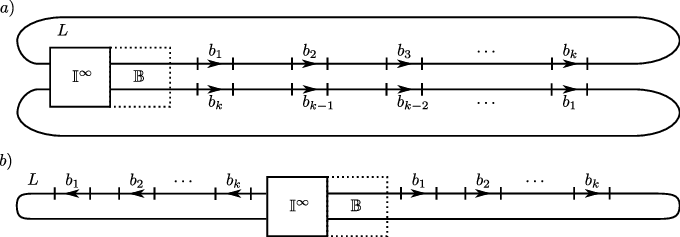}
\caption{}\label{fig:kTRivOr}
\end{figure}

Similarly, we say $(W,U)$ is a \emph{non-orientable test pair of height $k$} if if $U$ has $k$ 1-handles relative to $L$, attached via restrictions of disjoint embeddings $b_1, \ldots, b_k : \{\pm 1\} \times \bR \hookrightarrow L$ to $\{\pm1 \} \times \bI$, and this data has the combinatorial form shown in Figure \ref{fig:kTRivNO}.

\begin{figure}[h]
\centering
\includegraphics[bb = 0 0 319 67]{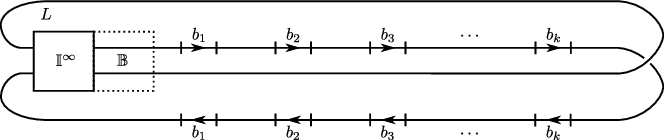}
\caption{}\label{fig:kTRivNO}
\end{figure}

We say that a tangential structure $\theta$ is \emph{$k$-trivial} if for every orientable test pair $(W, U)$ of height $k$, $U$ absorbs $W$. Similarly, we say that a tangential structure $\theta$ is \emph{$k'$-trivial for projective planes} if for every non-orientable test pair $(W, U)$ of height $k'$, $U$ absorbs $W$.
\end{defn}

The combinatorial forms we have singled out in this definition have the following motivation: attaching 1-handles to $L$ along these intervals gives the orientable (in Figure \ref{fig:kTRivOr}) or non-orientable (in Figure \ref{fig:kTRivNO}) surface with the largest possible genus. In Figure \ref{fig:kTRivOr} a) or b) the surface obtained by attaching such 1-handles is connected and orientable, and has 2 boundary components and genus $\tfrac{k-2}{2}$ if $k$ is even, and 1 boundary component and genus $\tfrac{k-1}{2}$ if $k$ is odd. In Figure \ref{fig:kTRivNO} the surface obtained by attaching such 1-handles consists of the disjoint union of a disc and a non-orientable surface with 1 boundary component and genus $k$.

\subsection{Stabilisation of $\pi_0$}\label{sec:Pi0StabDefn}

A necessary condition for a family of spaces to exhibit homological stability is that their sets of path components (or equivalently, their zeroth homology) stabilises. This will be one of the two requirements of our stability theorem: the other is that the tangential structure should be $k$-trivial for some $k$. In order to not have to distinguish cases, we introduce the following piece of \emph{ad hoc} notation: if ${P}$ is a 1-manifold with $\theta$-structure consisting of a pair of circles, we let
$$\mathcal{M}^\theta(-1,+;{P}) \subset \mathcal{N}^\theta(P)$$
be the subspace of those $\theta$-surfaces which are diffeomorphic to a pair of discs. Then if ${W} : (t,{P}) \leadsto (t',{P}')$ is a $\theta$-cobordism which has a single 1-handle relative to $P$ which joins the two components, there is an induced map
$$W_* : \mathcal{M}^\theta(-1,+;{P}) \lra \mathcal{M}^\theta(0,+;{P}')$$
which we shall consider as being of type $\alpha$.

\begin{defn}\label{defn:StabilisingO}
For an integer $h \geq 0$, say that a tangential structure $\theta$ \textit{stabilises for orientable surfaces at genus $h$} if all stabilisation maps
$$\mathcal{M}^\theta(g,+;P) \lra \mathcal{M}^\theta(g+1,+;P')$$
of type $\alpha$, and all stabilisation maps
$$\mathcal{M}^\theta(g,+;P) \lra \mathcal{M}^\theta(g,+;P')$$
of type $\beta$, are bijections for all $g \geq h$ and surjections for all $g \geq h-1$.
\end{defn}

In the non-orientable case, we must also introduce a piece of \emph{ad hoc} notation. If, and only if, $P$ consists of a single circle, let us write $\mathcal{M}^\theta(0,-;P) := \mathcal{M}^\theta(0,+;P)$ for the moduli space of discs with boundary $P$ (despite the fact that a disc is not non-orientable).

\begin{defn}\label{defn:StabilisingNO}
For an integer $h' \geq 2$, say a tangential structure $\theta$ \textit{stabilises at genus $h'$ for projective planes} if all stabilisation maps of type $\mu$
\begin{equation}\label{eq:MuStab}
\mathcal{M}^\theta(g,-;P) \lra \mathcal{M}^\theta(g+1,-;P')
\end{equation}
are bijections for all $g \geq h'$ and surjections for all $g \geq h'-1$. Say $\theta$ \textit{stabilises at genus $1$ for projective planes} if all stabilisation maps of type $\mu$
$$\mathcal{M}^\theta(0,-;P) \lra \mathcal{M}^\theta(1,-;P')$$
are surjective (where, by our convention above, $P$ is a single circle), and all stabilisation maps of type $\mu$ as in \eqref{eq:MuStab} are bijective for $g \geq 1$ (in this case for arbitrary $P$).

Finally, for an integer $h \geq 0$, say that a tangential structure $\theta$ \textit{stabilises for non-orientable surfaces at genus $h$} if all stabilisation maps
$$\mathcal{M}^\theta(g,-;P) \lra \mathcal{M}^\theta(g+2,-;P')$$
of type $\alpha$, and all stabilisation maps
$$\mathcal{M}^\theta(g,-;P) \lra \mathcal{M}^\theta(g,-;P')$$
of type $\beta$, are surjections for all $g \geq h$. 
\end{defn}

\subsection{Formal $k$-triviality}\label{sec:FormalKTriviality}

In this section we shall show that for a tangential structure $\theta$, exhibiting stability for $\pi_0$ is enough to ensure that it is $k$-trivial for some $k$. This method often delivers a non-optimal $k$, and as the slope of the stability range we will produce depends on $k$, this then gives a non-optimal stability range. However, for some purposes it is enough to know merely the existence of a stability range --- for example, to apply the methods of \cite{GMTW} to identify the stable homology of $\mathcal{M}^\theta(F; \ell_{\partial F})$.

\begin{prop}\label{prop:FormalkTriv}
If $\theta$ stabilises for orientable surfaces at genus $h \geq 0$ then it is $(2h+1)$-trivial.
\end{prop}

\begin{proof}
Let $({W}, U)$ be an orientable test pair of height $(2h+1)$. We have elements
$$M(Q,U) \circ (1,[0,1] \times L'_{Q})    \in \mathcal{C}_{\theta,T}(L'_{Q}, L_{Q})$$
and
$$(1,[0,1] \times L_{Q'})\circ M(Q',U)  \in \mathcal{C}_{\theta,T}(L'_{Q'}, L_{Q'}),$$
and we are searching for a morphism $(1,Z) \in \mathcal{C}_{\theta,T}(L'_{Q'}, L_{Q})$ such that
$$[(1,Z) \circ M(W,L')] = [M(Q,U) \circ (1,[0,1] \times L'_{Q})] \in \pi_0\mathcal{C}_{\theta,T}(L'_{Q}, L_{Q})$$
and
$$[M(W, L) \circ (1,Z)] = [(1,[0,1] \times L_{Q'})\circ M(Q',U)] \in \pi_0\mathcal{C}_{\theta,T}(L'_{Q'}, L_{Q'}).$$
There is a commutative square
\begin{equation}\label{eq:FormalKTrivSq}
\begin{gathered}
\xymatrix{
\mathcal{C}_{\theta,T}(L'_{Q'}, L_{Q}) \ar[d]^-{M(W, L) \circ -} \ar[rr]^-{- \circ M(W,L')} & & \mathcal{C}_{\theta,T}(L'_{Q}, L_{Q}) \ar[d]\ar[d]^-{M(W, L) \circ -}\\
\mathcal{C}_{\theta,T}(L'_{Q'}, L_{Q'}) \ar[rr]^-{- \circ M(W,L')} & & \mathcal{C}_{\theta,T}(L'_{Q}, L_{Q'}),
}
\end{gathered}
\end{equation}
and we are searching for a path component of the top left-hand corner that maps to certain path components under the two maps out of this space.

To address this problem, let us note that in order to consider surfaces which are standard on $\bB$, we may as well cut out the interior of $[0,t] \times T$ and consider surfaces with corners and prescribed boundary conditions for the $\theta$-structure. We may then unbend the corners by gluing on a suitable cobordism.

From this point of view, the commutative square \eqref{eq:FormalKTrivSq} may be replaced (when restricted to the path-components consisting of those surfaces of the correct topological type) by a homotopy-commutative square consisting of moduli spaces $\mathcal{M}^\theta(g,+;P)$ for certain genera $g$ and boundary conditions $P$. The pattern of handle attachments in Figure \ref{fig:kTRivOr} a) or b) shows that the manifolds $M(Q,U)$ and $M(Q',U)$ are both abstractly the disjoint union of a genus $h$ surface with 3 boundary components and a collection of cylinders. The cylinders play no role in this discussion, and when we remove the interior of $[0,1] \times T$ from the genus $h$ component we obtain a surface of genus $h$ with a single boundary component. Hence those path components of the spaces in the commutative square \eqref{eq:FormalKTrivSq} which are relevant for this problem give a commutative square homotopy equivalent to the following,
\begin{equation*}
\xymatrix{
\mathcal{M}^\theta(h-1,+;\bigcirc \bigcirc) \ar[d]^-{\text{type } \alpha} \ar[r]^-{\text{type } \alpha}& \mathcal{M}^\theta(h,+;\bigcirc) \ar[d]\ar[d]^-{\text{type } \beta}\\
\mathcal{M}^\theta(h,+; \bigcirc) \ar[r]^-{\text{type } \beta}& \mathcal{M}^\theta(h,+;\bigcirc \bigcirc),
}
\end{equation*}
where we have just indicated the number of boundary components of each surface (the precise $\theta$-structure on them does not matter for what follows, though it is fixed), and the type of elementary stabilisation map that each of the four maps gives. (If $h=0$ then the top left corner must be interpreted using the convention introduced in Section \ref{sec:Pi0StabDefn}.)

As we have supposed that $\theta$ stabilises for orientable surfaces at genus $h$, the map on path components induced by the top map
$$\pi_0(\text{type } \alpha) : \pi_0\mathcal{M}^\theta(h-1,+;\bigcirc \bigcirc) \lra \pi_0\mathcal{M}^\theta(h,+;\bigcirc)$$
is surjective, and the map on path components induced by the bottom map
$$\pi_0(\text{type } \beta) : \pi_0\mathcal{M}^\theta(h,+;\bigcirc) \lra \pi_0\mathcal{M}^\theta(h,+;\bigcirc \bigcirc)$$
is injective. Hence the top and bottom maps in \eqref{eq:FormalKTrivSq} also have these properties.

Thus we may choose $[(1,Z)] \in \pi_0\mathcal{C}_{\theta,T}(L'_{Q'}, L_{Q})$ so that it maps to $[M(Q,U) \circ (1,[0,1] \times L'_{Q})]$ under $- \circ M(W,L')$. Then using a path from
$$M(W,L) \circ M(Q,U) \circ (1, [0,1] \times L'_Q) \text{ to } (1, [0,1] \times L_{Q'}) \circ M(Q', U) \circ M(W, L')$$
by stretching, we have that
$$[M(W,L) \circ (1,Z) \circ M(W,L')] = [(1, [0,1] \times L_{Q'}) \circ M(Q', U) \circ M(W, L')]$$
but by injectivity of $-\circ M(W,L')$ in genus $h$ it follows that
$$[M(W,L) \circ (1,Z)] = [(1,[0,1] \times L_{Q'})\circ M(Q',U)] \in \pi_0\mathcal{C}_{\theta,T}(L'_{Q'}, L_{Q'})$$
as required.
\end{proof}

There is an analogous statement for stabilisation by projective planes, proved in a similar way.

\begin{prop}\label{prop:FormalkTrivNO}\mbox{}
Suppose $\theta$ stabilises for projective planes at genus $h' \geq 1$. Then it is $h'$-trivial for projective planes.
\end{prop}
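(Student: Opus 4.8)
The proof is a transcription of that of the previous proposition, with the $\alpha$- and $\beta$-stabilisations there replaced by $\mu$-stabilisations. Assume $h>0$, the case $h=0$ (where $k=2$) being handled by the same argument with the obvious simplifications, and set $k:=\max(2,h)$, so that $k\geq h$. Fix $\theta$-arcs $L_0$, $L_1$ and a diagram of the shape occurring in the definition of $k$-triviality for projective planes: $\theta$-surfaces $\Sigma_L$, $\Sigma_R$, each a copy of $S_{k,1+1}$ realised as a $k$-fold composite of copies of the M\"{o}bius-band-with-a-disc-removed $S_{1,1+1}$; $\theta$-surfaces $\Sigma_T$, $\Sigma_B$, each a copy of $S_{1,1+1}$; all carrying embeddings of the $L_i$; and an isomorphism of $\theta$-surfaces identifying the two composites around the square and restricting to the identity on the boundary and on the $L_i$. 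As before, for a $\theta$-surface $\Sigma$, resp.\ a boundary condition $\delta$, containing the $L_i$, write $\Sigma\setminus L_i$, resp.\ $\delta\setminus L_i$, for the result of cutting along both $L_0$ and $L_1$.

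First I would reformulate the existence of the required diagonal. A $\theta$-structure on $S_{k-1,1+1}$ carrying embeddings of the $L_i$ --- that is, the kind of element of $\pi_0(\mathcal{M}^\theta(S_{k-1,1+1};\delta))$ asked for in the definition --- is the same datum as an element of $\pi_0(\mathcal{M}^\theta(S_{k-1,1+1}\setminus L_i;\delta\setminus L_i))$. The topological types of the cut surfaces $S_{k-1,1+1}\setminus L_i$, $\Sigma_L\setminus L_i$, $\Sigma_R\setminus L_i$, $\Sigma_T\setminus L_i$, $\Sigma_B\setminus L_i$ are read off from the figure exactly as in the orientable case; the point is that, each vertical surface being a single cross-cap piece and each horizontal surface a composite of such pieces, cutting along the $L_i$ turns ``compose with $\Sigma_T$'' and ``compose with $\Sigma_B$'' into genuine $\mu$-stabilisation maps of the kind appearing in Definition~\ref{defn:Stabilising}. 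One thus obtains a commutative square of sets of path components, with upper-left corner $\pi_0(\mathcal{M}^\theta(S_{k-1,1+1}\setminus L_i;\delta\setminus L_i))$, with the two maps out of this corner induced by the cuts of $\Sigma_B$ and $\Sigma_T$, and with lower-right corner receiving both $[\Sigma_R\setminus L_i]$ and $[\Sigma_L\setminus L_i]$; the chosen isomorphism, restricted to the cut surfaces, exhibits these two classes as the same element of the lower-right corner.

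The element of the upper-left corner we need is exactly one mapping to $[\Sigma_R\setminus L_i]$ along the top edge and to $[\Sigma_L\setminus L_i]$ along the left edge, so it suffices to produce any such element. Here one invokes that $\theta$ stabilises for projective planes at genus $h$: since $k\geq h$, the non-orientable genera occurring at the four corners of the square are all $\geq h-1$, so all four maps are at least surjective, and the bottom and right maps --- whose sources have genus $\geq h$ --- are bijective. Hence any preimage of $[\Sigma_R\setminus L_i]$ under the surjective top map automatically maps to $[\Sigma_L\setminus L_i]$ under the left map, the bottom map being injective; reassembled along the $L_i$ this is the desired diagonal $S_{k-1,1+1}$, and the required isomorphisms $H$ and $G$ come from the edges of the square and the original isomorphism. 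The duals, in which the cross-cap piece is attached along the opposite boundary, are treated symmetrically. The only step needing genuine attention --- and the only place where the value $k=\max(2,h)$ is used rather than a larger one --- is the bookkeeping of the non-orientable genus and the number of boundary components at the four corners of the cut square: one must check that a single $\mu$-move already accounts for one full unit of non-orientable genus, so that the corners land in the ranges where Definition~\ref{defn:Stabilising} supplies surjectivity and bijectivity.
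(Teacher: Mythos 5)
Your proof is correct and takes essentially the same approach as the paper: the paper's own proof of this proposition consists of the single remark that it is ``proved in the same way'' as the preceding orientable-surface proposition, and your transcription --- replacing the square of $\alpha/\beta$-stabilisations of $\pi_0$-sets by the corresponding square of $\mu$-stabilisations, then choosing any preimage of $[\Sigma_R\setminus L_i]$ under the surjective top edge and using injectivity of the bottom edge --- is exactly that argument. If anything, your write-up is more explicit than the paper's.
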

\begin{proof}
The proof is largely the same as the last case. If we are trying to show $h'$-triviality for $h' \geq 2$ the relevant part of the diagram \eqref{eq:FormalKTrivSq} is now homotopy equivalent to
\begin{equation*}
\xymatrix{
\mathcal{M}^\theta(h'-1,-;\bigcirc) \ar[d]^-{\text{type } \mu} \ar[r]^-{\text{type } \mu}& \mathcal{M}^\theta(h',-;\bigcirc) \ar[d]\ar[d]^-{\text{type } \mu}\\
\mathcal{M}^\theta(h',-; \bigcirc) \ar[r]^-{\text{type } \mu}& \mathcal{M}^\theta(h'+1,-;\bigcirc ).
}
\end{equation*}
(In fact, after removing $[0,1] \times T$ the surface $Z$ we are trying to find decomposes as a disc and a non-orientable surface of genus $(h'-1)$ with one boundary; the disc plays no role, and the above commutative square is used to determine the isomorphism type of the remaining $\theta$-surface.)

For the argument to go through, we require the top map to induce a surjection on $\pi_0$, and the bottom map to induce an injection on $\pi_0$, but this follows immediately from stabilisation for projective planes at genus $h'$. 

In the case $h'=1$ we repeat the argument, but the top right-hand corner is now $\mathcal{M}^\theta(0,-;\bigcirc)=\mathcal{M}^\theta(0,+;\bigcirc)$ as defined using the convention introduced just before Definition \ref{defn:StabilisingNO}, and the surjectivity of the top map on $\pi_0$ follows from the definition of stabilising at genus $1$ for projective planes.
\end{proof}

\subsection{Examples}\label{sec:kTrivialityExamples}

The two main examples we will discuss in this paper are oriented surfaces and non-orientable surfaces, as well as these with maps to a simply connected background space.

\begin{prop}\label{prop:OrientedUnoriented1Triviality}
The tangential structures given by the maps $BO(2) \to BO(2)$ and $BSO(2) \to BO(2)$ are both 1-trivial, and they both stabilise at genus 0 for both orientable and non-orientable surfaces. Furthermore, $BO(2) \to BO(2)$ is 1-trivial for projective planes and stabilises for projective planes at genus $1$.
\end{prop}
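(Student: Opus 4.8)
The plan is to verify the three assertions directly from the definitions. For the stabilisation-at-genus-$0$ claims, the key point is that $\pi_0(\mathcal{M}^\theta(F;\delta))$ classifies $\theta$-structures on $F$ up to the action of $\Gamma(F)$ together with change of boundary condition, and for $\theta = \mathrm{Id}\colon BO(2)\to BO(2)$ every bundle map $TF \to \gamma_2$ exists and is unique up to homotopy rel boundary once the underlying homotopy class is fixed — so $\mathcal{M}^\theta(F;\delta) \simeq B\Diff_\partial(F)$ and $\pi_0$ is a single point for every nonempty boundary condition; similarly for $\theta\colon BSO(2)\to BO(2)$, a $\theta$-structure is an orientation of $F$, and with a boundary condition pinning down the orientation near $\partial F$ the set $\pi_0(\mathcal{M}^+(F;\delta))$ is again a point (for $F$ orientable; the space is empty otherwise, compatibly with the maps). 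Hence $\pi_0(\alpha_{g,b})$, $\pi_0(\beta_{g,b})$, and $\pi_0(\mu_{n,b})$ are bijections of one-point sets for all $g,n \geq 0$, which is exactly stabilisation at genus $0$ (and at genus $0$ for projective planes). This part is essentially bookkeeping.

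For $1$-triviality I would invoke Remark \ref{rem:1Triviality}: when $k=1$ the required $\Sigma_\Delta$ is not a surface but merely an isomorphism of boundary conditions $F \cong C$ which is the identity at the endpoints of the arcs $L_0, L_1$, together with the two identifications $H, G$. So the whole statement reduces to: the space $\Bun_\partial$ of $\theta$-structures on the relevant boundary circles, rel the prescribed endpoints on the $L_i$, is path-connected (and that $\Sigma_B\circ\Sigma_\Delta \cong \Sigma_R$, $\Sigma_L \cong \Sigma_\Delta\circ\Sigma_T$ follow once such a path is chosen, by gluing the path-of-structures onto the cylinder as in the definition of $\varphi(\delta_t)$). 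For $\theta=\mathrm{Id}$ the space of bundle maps $T(S^1\times[0,1])|_{S^1} \to \gamma_2$ rel two points is a union of components of a mapping space into $O(2)$, each of which is connected (being a torsor-like set of homotopies that can be interpolated since $\pi_0$ of the relevant structure space is determined by the discrete endpoint data), and for $\theta\colon BSO(2)\to BO(2)$ one replaces $O(2)$ by $SO(2)$ and runs the same argument — here connectivity is even more transparent since $SO(2)$ is connected. The $1$-triviality-for-projective-planes statement for $BO(2)\to BO(2)$ is identical: the horizontal and vertical surfaces are now $S_{1,1+1}$ and the diagonal "$S_{0,1+1}$" is again just an isomorphism of boundary conditions, and connectivity of the structure space on a circle (rel endpoints) for the identity tangential structure is unchanged by non-orientability.

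The main obstacle — really the only subtle point — is carefully matching the combinatorial/diagrammatic setup of Figure \ref{fig:kTriv} b) with the claim, i.e.\ checking that when $k=1$ the indices in "$\Sigma_\Delta \in \mathcal{M}^\theta(\Sigma_{K,1+1}, F+C)$" with $K=0$ really do degenerate to the "isomorphism of boundary conditions" of Remark \ref{rem:1Triviality}, and that the two isomorphisms $H$ and $G$ demanded in the definition of triviality can be produced from a single choice of path $\delta_t$ of boundary structures without any genuine surface-topological input. Once that translation is made, everything rests on the connectivity of $\Bun(\,\cdot\,, \theta^*\gamma_2)$ over a boundary circle rel finitely many pinned points, which for $\theta = \mathrm{Id}_{BO(2)}$ and $\theta\colon BSO(2)\to BO(2)$ is elementary. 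I would organise the write-up as: (1) compute $\pi_0(\mathcal{M}^\theta(F;\delta))$ in both cases and deduce stabilisation at genus $0$; (2) unwind the $k=1$ case of the triviality definition via Remark \ref{rem:1Triviality}; (3) verify the needed connectivity statement and assemble $H$, $G$; (4) note that the projective-plane case is formally identical.
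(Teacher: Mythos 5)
Your proposal is correct and follows essentially the same route as the paper, whose proof consists precisely of the observation that for these two structures a boundary condition on one boundary component determines the structure on the remaining boundary and on the interior uniquely up to isomorphism, so that Remark \ref{rem:1Triviality} gives 1-triviality (also for projective planes in the $BO(2)$ case), while connectedness of the moduli spaces gives stabilisation at genus $0$. The only point to tighten is that producing the isomorphisms $H$ and $G$ uses this uniqueness of the structure on the \emph{interior} of the glued pairs of pants (rel boundary and the arcs $L_i$), not merely path-connectivity of structures on the boundary circle rel the pinned endpoints --- but that is exactly the contractibility/uniqueness statement you already record in your step (1), so no new input is needed.
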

\begin{proof}
If we show that both structures stabilise at genus 0, then the results of the previous section show that they are also 1-trivial. If we show that $BO(2) \to BO(2)$ stabilises for projective planes at genus $1$, then the results of the previous section show that it is also 1-trivial for projective planes.

To do this, we show that the spaces $\mathcal{M}^\theta(g,+;{P})$ for $g \geq 0$ and $\mathcal{M}^\theta(g,-;{P})$ for $g \geq 1$ have a single path component, using the description of Section \ref{sec:PathComponents}. We choose a surface $F$ of topological type $(g,\pm)$ with boundary $P$ (such that the orientation of ${P}$ extends to $F$, in the orientable case) so that
$$\mathcal{M}^\theta(g,\pm;P) \simeq \Bun_\partial(TF, \theta^*\gamma_2; \ell_P) \hcoker \Diff_\partial(F).$$
For both choices of $\theta$ the space $\Bun_\partial(TF, \theta^*\gamma_2;\ell_P)$ is contractible, so the Borel construction of any group acting on it is path connected. For $BO(2) \to BO(2)$ it is contractible as this is the universal property of the universal bundle $\gamma_2 \to BO(2)$. For $BSO(2) \to BO(2)$ we use that a connected surface with boundary has a unique orientation compatible with a given one on the boundary.
\end{proof}

Given a tangential structure $\theta : \X \to BO(2)$, we may consider the new tangential structure $\theta \times Y := \theta \circ \pi_B : \X \times Y \to BO(2)$ for a space $Y$. 

\begin{prop}\label{prop:AddingBackgroundSpace}
Let $\theta : \X \to BO(2)$ be a tangential structure which is $k$-trivial and stabilises at genus $h$. Let $Y$ be a simply connected space. Then $\theta \times Y  : \X \times Y \to BO(2)$ is $k$-trivial and stabilises at genus $h$.

Similarly, suppose $\theta$ is $k'$-trivial for projective planes and stabilises at genus $h'$ for projective planes. Then $\theta \times Y$ is $k'$-trivial and stabilises at genus $h'$ for projective planes.
\end{prop}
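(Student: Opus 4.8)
The plan is to exploit the fact that adding the background space $Y$ splits off cleanly at the level of bundle data, and then to track the single extra invariant it contributes. Since $(\theta\times Y)^*\gamma_2=\pi_\X^*(\theta^*\gamma_2)$, its total space is $E(\theta^*\gamma_2)\times Y$, so there is a natural $\Diff_\partial(F)$-equivariant homeomorphism $\Bun_\partial(TF,(\theta\times Y)^*\gamma_2;(\delta,\delta_2))\cong \Bun_\partial(TF,\theta^*\gamma_2;\delta)\times\map_\partial(F,Y;\delta_2)$, compatible with the gluing and stabilisation maps of \S\ref{sec:ModuliSurfaces}. Taking homotopy quotients gives a fibration sequence $\map_\partial(F,Y;\delta_2)\to\mathcal{M}^{\theta\times Y}(F;\delta)\to\mathcal{M}^\theta(F;\delta')$, natural in gluings, and a stabilisation map induces on fibres the operation of extending a map to $Y$ by the constant map over the glued-in piece. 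Because $Y$ is simply connected one may, up to isomorphism of boundary conditions, take $\delta_2$ constant, so $\map_\partial(F,Y;\delta_2)\simeq\map_*(F/\partial F,Y)$; and $F/\partial F\simeq \bar F\vee\bigvee^{b-1}S^1$, where $\bar F$ is the closed surface obtained by capping off all $b$ boundary circles of $F$. Using $\pi_1Y=0$ one gets $\pi_0\map_\partial(F,Y;\delta_2)=[\bar F,Y]_*$, and from the cofibre sequence $\bigvee S^1\to\bar F\to S^2$ --- whose connecting map suspends to $0$ in the orientable case and to multiplication by $2$ in the non-orientable case --- this is $\pi_2 Y$, respectively $\pi_2Y/2\pi_2Y$, \emph{independently of the genus and of $b$}. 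I will call this invariant of a $(\theta\times Y)$-surface its \emph{degree} $d(\Sigma)$. A Mayer--Vietoris computation shows that degree is additive under gluing along boundary circles, $d(\Sigma'\circ\Sigma'')=d(\Sigma')+d(\Sigma'')$, and a degree-one collapse $\bar F'\to\bar F$ induces the identity on $\pi_2$, so every stabilisation map acts as the identity on the degree.

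For stabilisation of $\pi_0$: in the long exact sequence of the fibration above the base $\pi_0$ stabilises at genus $h$ by hypothesis, the fibre $\pi_0$ is the genus-independent group of degrees, and every stabilisation map acts on it as the identity; feeding this in --- using that the monodromy actions are compatible with the identification of the fibre $\pi_0$'s, via the action on $H_2$ --- shows that $\pi_0(\mathcal{M}^{\theta\times Y}(-))$ stabilises at genus $h$, and likewise at $h'$ for projective planes. This is precisely where simple-connectivity of $Y$ enters: it collapses the fibre contribution to a constant system under stabilisation.

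For $k$-triviality I would first observe that $k$-triviality of $\theta$ implies $(k+1)$-triviality --- a $(k+1)$-fold composition of elementary stabilisations factors through a $k$-fold one, and the required diagonal surface is obtained by gluing one more pair of pants onto the one supplied for $k$ --- so it suffices to treat the case $k\ge 2$ and show that $\theta\times Y$ is then $k$-trivial; applied to a $1$-trivial $\theta$ this yields $2$-triviality of $\theta\times Y$, which is the assertion. So suppose $k\ge 2$ and we are given the data $\Sigma_L,\Sigma_R,\Sigma_B,\Sigma_T$ and $\varphi:\Sigma_R\circ\Sigma_T\cong\Sigma_B\circ\Sigma_L$ of the definition of $k$-triviality for $\theta\times Y$. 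Forgetting the maps to $Y$ and applying $k$-triviality of $\theta$ produces a $\theta$-surface $\Sigma_\Delta^\theta$ on $\Sigma_{K-1,2+1}$ (or $\Sigma_{K,1+1}$ when $k$ is odd) together with isomorphisms $H^\theta,G^\theta$. It then remains to equip $\Sigma_\Delta^\theta$ with a map to $Y$ --- that is, to prescribe its degree $d_\Delta$ --- so that $H^\theta,G^\theta$ promote to isomorphisms of $(\theta\times Y)$-surfaces; since degree is a complete invariant of the $Y$-map rel boundary and is additive, this is exactly the requirement $d_B+d_\Delta=d_R$ and $d_\Delta+d_T=d_L$, for which $d_\Delta:=d_R-d_B$ is a solution, the needed compatibility $d_R+d_T=d_B+d_L$ being supplied by $\varphi$. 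Here $k\ge 2$ is used so that $\Sigma_{K-1,2+1}$ (resp.\ $\Sigma_{K,1+1}$) is a genuine surface with boundary, carrying maps to $Y$ of arbitrary degree compatible with the (constant) boundary condition, rather than a mere isomorphism of boundary conditions. Finally one checks that the embeddings of the arcs $L_i$ and all the isomorphisms can be arranged to respect the $L_i$, which is routine as each $L_i$ is contractible. The cases of type $\beta$, the dual cobordisms, and the $\mu$-flavoured $k$-triviality for projective planes go through verbatim.

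The main obstacle is the bookkeeping of the first two paragraphs: correctly identifying $\pi_0$ of the mapping spaces $\map_\partial(F,Y;\delta_2)$, establishing additivity of the degree under gluing, and checking that these identifications are compatible with the prescribed boundary conditions and with the arcs $L_i$ appearing in the definition of $k$-triviality. Once these facts are in place, everything else is formal manipulation of the fibration and of the definition of $k$-triviality.
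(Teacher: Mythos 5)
Your proof is correct and follows essentially the same route as the paper: the fibration $\map(F,\partial F;Y,*)\to\mathcal{M}^{\theta\times Y}(F)\to\mathcal{M}^\theta(F)$ with $\pi_0$ of the fibre identified as $H_2(Y;\bZ)=\pi_2Y$ (orientable) or $H_2(Y;\bF_2)=\pi_2Y/2\pi_2Y$ (non-orientable), giving the exact sequence of sets for $\pi_0$-stabilisation, and then the additivity of this ``degree'' under gluing to define the diagonal surface's class as $[\Sigma_L]-[\Sigma_T]=[\Sigma_R]-[\Sigma_B]$ for $k\geq 2$-triviality. Your extra details — the cofibre-sequence computation of $[\bar F,Y]_*$, the explicit reduction of $k=1$ to $k=2$, and the monodromy check — are refinements the paper leaves implicit rather than a different argument.
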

\begin{proof}
Note that for a surface $X$, a $\theta \times Y$-structure on $X$ is a pair of a $\theta$-structure $\ell_X$ on $X$ and a continuous map $f_X : X \to Y$. Let us agree to write $\theta \times Y$-structures as pairs $(\ell_X, f_X)$. As a simplifying preliminary, note that if $P$ is a 1-manifold then as $Y$ is simply-connected any $(\ell_P, f_P)$ can be changed by a quasi-isomorphism in $\mathcal{C}_{\theta \times Y}$ so that $f_P$ is the constant map $c_{y_0}$ to a basepoint $y_0 \in Y$. Hence it is enough to only consider boundary conditions of this form.

Let us first show that $\theta \times Y$ stabilises on $\pi_0$ at genus $h$. There is a fibre sequence
\begin{equation}\label{eq:AddingBGSpaceSequence}
\map(X, \partial X ; Y, *) \lra \mathcal{M}^{\theta \times Y}(g, \pm;(Q,\ell_Q \times c_{y_0})) \lra \mathcal{M}^{\theta}(g, \pm;(Q, \ell_Q))
\end{equation}
where $X$ is a connected surface of orientation type $\pm$ and genus $g$. This fibration has a section, by giving each surface the constant map to the basepoint $y_0$. Thus the preimage of $[\xi]$ under the surjection
$$\pi_0(\mathcal{M}^{\theta \times Y}(g, \pm;(Q,\ell_Q \times c_{y_0})) \lra \pi_0(\mathcal{M}^{\theta}(g, \pm;(Q, \ell_Q))$$
is in natural bijection with $\pi_0(\map(X, \partial X ; Y, *))$.

In the non-orientable case there is a natural bijection $\pi_0(\map(X, \partial X ; Y, *)) \cong H_2(Y, \{y_0\} ; \bF_2)$, given by sending a map to the image of its $\bF_2$ fundamental class. This extends to a map
$$\pi_0(\mathcal{M}^{\theta \times Y}(g, \pm;(Q,\ell_Q \times c_{y_0})) \lra H_2(Y, \{y_0\} ; \bF_2),$$
which together with the surjection above gives a bijection
$$\pi_0(\mathcal{M}^{\theta \times Y}(g, -;(Q,\ell_Q \times c_{y_0})) \lra \pi_0(\mathcal{M}^{\theta}(g, -;(Q, \ell_Q)) \times H_2(Y, \{y_0\} ; \bF_2).$$
This bijection is natural for stabilisation maps, so if $\theta$ stabilises at genus $h$, then so does $\theta \times Y$.

Similarly, in the orientable case there is a natural bijection $\pi_0(\map(X, \partial X ; Y, *)) \cong H_2(Y, \{y_0\} ; \bZ)$, given by sending a map to the image of its $\bZ$ fundamental class, which gives a bijection
$$\pi_0(\mathcal{M}^{\theta \times Y}(g, +;(Q,\ell_Q \times c_{y_0})) \lra \pi_0(\mathcal{M}^{\theta}(g, +;(Q, \ell_Q)) \times H_2(Y, \{y_0\} ; \bZ).$$
This bijection is natural for stabilisation maps, so if $\theta$ stabilises at genus $h$, then so does $\theta \times Y$.

There is one special case which must be treated carefully: if $h'=1$ then we must consider stabilising a genus zero surface by attaching a M\"{o}bius band, which corresponds to a map $H_2(Y;\bZ) \to H_2(Y;\bF_2)$ given by reduction modulo 2 followed by addition of some element. This will not be an isomorphism, but is an epimorphism, which is all that is required in this case.

\vspace{2ex}

Let us now show that if $\theta$ is $k$-trivial then so is $\theta \times Y$. Let us be given an orientable test pair $({W},U)$ of height $k$, all equipped with $\theta\times Y$-structures, which we call $\ell_W \times f_W$ and $\ell_U \times f_U$. As $\theta$ is $k$-trivial, we can find a $\theta$-cobordism $(1,(Z, \ell_Z)) \in \mathcal{C}_{\theta,T}(L'_{Q'}, L_Q)$ which exhibits $(U, \ell_U)$ as absorbing $(W,\ell_W)$. Let $f_Z : (Z, ([0,1] \times T) \cup \partial Z) \to (Y, y_0)$ be a continuous map, which is given by a collection of classes $(z_1, \ldots, z_i) \in H_2(Y, \{y_0\};\bZ)$, one for each component of $Z \setminus ([0,1] \times T)$. To check whether $(Z, \ell_Z \times f_Z)$ exhibits $(U, \ell_U \times f_U)$ as absorbing $(W, \ell_W \times f_W)$, knowing that it does so as a $\theta$-manifold, then by the calculation of $\pi_0(\mathcal{M}^{\theta \times Y}(g, +;(Q,\ell_Q \times c_{y_0}))$ above we just have to check that an equation among homology classes are satisfied, namely
$$z_1 + \cdots + z_i + (f_W)_*([W]) = (f_U)_*([U]).$$
If this does not hold, we may simply re-choose the homology class $z_1$ to ensure that it does (this corresponds to changing the map $f_Z$ on one path component by adding on a class in $\pi_2(Y, y_0)$).

A similar argument with $\bF_2$-homology shows that if $\theta$ is $k'$-trivial for projective planes, so is $\theta \times Y$.
\end{proof}

\subsection{A non-example}

The following non-example was described by Galatius and the author in \cite[\S 5.2]{GR-W}. Let $\theta : BSO(2) \to BO(2)$ be the tangential structiure corresponding to orientations, and consider $\theta \times B\bZ/2 : BSO(2) \times B\bZ/2 \to BO(2)$, that is, the tangential structure given by an orientation and a map to $B\bZ/2$. Using the Borel construction model, and taking boundary condition $\ell_{\partial \Sigma_{g,1}} \times c_*$ having constant map to the basepoint $* \in B\bZ/2$, we identify
$$\pi_0(\mathcal{M}^\theta(\Sigma_{g,1};\ell_{\partial \Sigma_{g,1}} \times c_*)) = H^1(\Sigma_{g,1}, \partial \Sigma_{g,1};\bZ/2)/\Gamma_{g,1},$$
which by Poincar{\'e} duality may be identified with $H_1(\Sigma_{g,1};\bZ/2)/\Gamma_{g,1}$. Using the formulas of \cite[Lemma 5.1]{GR-W} it follows that this orbit set has precisely two elements as long as $g \geq 1$: the orbit consisting of just $0 \in H_1(\Sigma_{g,1};\bZ/2)$, and an orbit consisting of all non-zero elements.

However, even though this moduli space always has two components (for $g \geq 1$), this tangential structure never stabilises on $\pi_0$. It follows from the formulas in \cite[Lemma 5.1]{GR-W} that there exists an $x \in \pi_0(\mathcal{M}^\theta(\Sigma_{1,2};\ell_{\partial \Sigma_{1,2}} \times c_*))$ such that the map
$$\pi_0(\mathcal{M}^\theta(\Sigma_{g,1};\ell_{\partial \Sigma_{g,1}} \times c_*)) \lra \pi_0(\mathcal{M}^\theta(\Sigma_{g+1,1};\ell_{\partial \Sigma_{g+1,1}} \times c_*)),$$
given by gluing with $x$ lands entirely in the orbit of non-zero elements: in particular, this stabilisation map is not surjective. Factorising $x$ into a map of type $\beta$ followed by a map of type $\alpha$, it follows that this tangential structure never stabilises on $\pi_0$.

Despite this faliure of stability, in \cite[\S 5.2]{GR-W} Galatius and the author still compute the ``stable homology" of these moduli spaces, suitably interpreted.

\subsection{Stability range for orientable surfaces}\label{sec:StabRangeOrientable}

Let $k \geq 1$ and $h \geq 0$ be integers. In this section we will describe four functions $F, G, X, Y : \bZ \to \bZ$ which depend on the integers $h$ and $k$. When $\theta$ is a tangential structure which is $k$-trivial and stabilises on $\pi_0$ at genus $h$, the functions $F$ and $G$ described below will occur in the statement of the stability theorem for $\theta$ (Theorem \ref{thm:StabOrientableSurfaces}), and the functions $X$ and $Y$ will occur in the proof of that theorem.

\begin{defn}\label{defn:Orientable:Range}
Let $F, G, X, Y : \bZ \to \bZ$ be defined to be
$$F(g)=G(g)=X(g)=Y(g)=-1 \text{ for } g \leq h-2$$
and satisfy
$$F(h-1)=G(h-1)=X(h-1)=Y(h-1)=0.$$
For values $g \geq h$ we define these functions recursively as follows (where we use the notation $n \vee 0 = \max(n,0)$).
\begin{enumerate}[(i)]
\item\label{it:Orientable:XY} Let
$$
%X(g) \leq \min(Y(g-1)+1,F(g-1)+1, G(g)+1),\\
%Y(g) \leq \min(X(g-2)+1, F(g-1)+1, G(g-1)+1).\\
X(g) = \min \begin{cases}
F(g-1)+1\\
G(g)+1\\
X(g-1)+1\\
Y(g-1)+1\\
0 \text{ if } g \leq 0
\end{cases}\quad\quad
Y(g) = \min \begin{cases}
F(g-1)+1\\
G(g-1)+1\\
X(g-2)+1\\
Y(g-1)+1\\
0 \text{ if } g \leq 1.
\end{cases}
$$

\item\label{it:Orientable:kIs1} If $k=1$ let
\begin{eqnarray*}
F(g) = \min \begin{cases}
F(g-1)+1\\
X(g)
\end{cases}\quad\quad
G(g) = \min \begin{cases}
G(g-1)+1\\
Y(g).
\end{cases}
\end{eqnarray*}

\item\label{it:Orientable:kIsEven} If $k = 2l$ with $l > 0$ let
\begin{eqnarray*}
F(g) = \min \begin{cases}
F(g -1)+1 \\
G(g - l)+1\\
X(g+1-l) \vee 0\\
Y(g+1-l) \vee 0\\
\end{cases}\quad
G(g) = \min \begin{cases}
F(g - l-1)+1\\
G(g -1)+1 \\
X(g - l) \vee 0\\
Y(g+1-l) \vee 0.
\end{cases}
\end{eqnarray*}

\item\label{it:Orientable:kIsOdd} If $k = 2l+1$ with $l > 0$ let
\begin{eqnarray*}
F(g) = \min \begin{cases}
F(g - l-1)+1\\
X(g-l) \vee 0\\
Y(g+1-l) \vee 0\\
\end{cases}\quad
G(g) = \min \begin{cases}
G(g - l-1)+1 \\
X(g - l) \vee 0\\
Y(g-l) \vee 0.
\end{cases}
\end{eqnarray*}
\end{enumerate}
\end{defn}

By changing the equalities to $\leq$ in the above definition, and for each of the functions $F$, $G$, $X$, and $Y$ trying the ansatz $\lfloor\tfrac{a \cdot g + b}{c}\rfloor$ with $a,b,c \in \bZ$, we obtain the following lower bounds, assuming that $h > 0$. (If $h=0$ these must be slightly modified, to account for the bottom condition in the definition of $X(g)$ and $Y(g)$: we leave such a modification to the reader.) In fact, it is easy but laborious to check that these lower bounds are in fact all equalities, by considering a minimal $g$ for which one of them is not, and deriving a contradiction.

\begin{enumerate}[(i)]
\item If $k=1$ and $h > 0$, then 
$$F(g),  X(g) \geq \left\lfloor \tfrac{2g-2h+3}{3}\right\rfloor \quad\quad G(g), Y(g) \geq \left\lfloor \tfrac{2g-2h+2}{3}\right\rfloor.$$

\item If $k=2l$ with $l>0$, and $h > 0$, then
\begin{align*}
F(g) &\geq \left\lfloor \tfrac{2g-2h+2}{2l+1}\right\rfloor \quad & X(g) &\geq \left\lfloor \tfrac{2g-2h+2l+1}{2l+1}\right\rfloor\\
G(g) &\geq \left\lfloor \tfrac{2g-2h+1}{2l+1}\right\rfloor \quad & Y(g) &\geq \left\lfloor \tfrac{2g-2h+2l}{2l+1}\right\rfloor.
\end{align*}

\item If $k=2l+1$ with $l>0$, and $h > 0$, then
\begin{align*}
F(g) &\geq \left\lfloor \tfrac{g-h+1}{l+1}\right\rfloor \quad & X(g) &\geq \left\lfloor \tfrac{g-h+l+1}{l+1}\right\rfloor\\
G(g) &\geq \left\lfloor \tfrac{g-h}{l+1}\right\rfloor \quad & Y(g) &\geq \left\lfloor \tfrac{g-h+l}{l+1}\right\rfloor.
\end{align*}
\end{enumerate}

\subsection{Stability range for non-orientable surfaces}\label{sec:StabRangeNonorientable}

Similarly to the last section, let $k' \geq 1$ and $h' \geq 1$ be integers, and let us define functions $H', Z' : \bZ \to \bZ$.

\begin{defn}\label{defn:Nonorientable:Range}
Let $H', Z' : \bZ \to \bZ$ be defined to be
$$H'(g)=Z'(g)=-1 \text{ for }  g \leq h'-2$$
and satisfy
$$H'(h'-1)=Z'(h'-1)=0.$$
For values $g \geq h'$ we define these functions as follows.
\begin{eqnarray*}
Z'(g) = \min \begin{cases}
H'(g-2)+1\\
\lfloor\tfrac{g}{3}\rfloor
\end{cases}
\quad\quad\quad
H'(g) = \min \begin{cases}
Z'(g-k'+1) \vee 0\\
H'(g-k'-1)+1.
\end{cases}
\end{eqnarray*}
\end{defn}

By changing the equalities to $\leq$ in the above definition, and for each of the functions $H'$ and $Z'$ trying the ansatz $\lfloor\tfrac{a \cdot g + b}{c}\rfloor$ with $a,b,c \in \bZ$, we obtain the following lower bounds.

\begin{enumerate}[(i)]
\item If $k'=1$, then
$$H'(g), Z'(g) \geq \left\lfloor \tfrac{g-h'+1}{3}\right\rfloor.$$
This is an equality if $h'=1$, but for $h' > 1$ better lower bounds can be obtained if we do not insist on (floors of) linear functions: for example, one may show that $H'(g),Z'(g)  \geq \lfloor \tfrac{g-h'+1}{2} \rfloor$ for $h' \leq g \leq 3h'$ and $H'(g),Z'(g) \geq \lfloor \tfrac{g}{3} \rfloor$ for $3h' \leq g$, which for $h'$ large is better than the range given above.

\item If $k' > 1$ and $h' > 1$, then
$$H'(g) \geq \left\lfloor \tfrac{g-h'+1}{k'+1}\right\rfloor \quad\quad Z'(g) \geq \left\lfloor \tfrac{g-h'+k'}{k'+1}\right\rfloor.$$
Again, a laborious check will show that these lower bounds are in fact equalities.
\end{enumerate}

\section{The stability theorems}\label{sec:StabilityTheorems}

We will now give the full statements of the quantitative homological stability theorems for moduli spaces of surfaces with $\theta$-structure. As always, there are slight differences between the orientable and non-orientable cases, so we have two statements.

\begin{thm}\label{thm:StabOrientableSurfaces}
Let $\theta : \X \to BO(2)$ be a tangential structure such that $\theta^*\gamma_2$ is orientable, which stabilises on $\pi_0$ for orientable surfaces at genus $h$ and is $k$-trivial. If $F$ and $G$ are the functions given in Definition \ref{defn:Orientable:Range}, then:
\begin{enumerate}[(i)]
	\item\label{it:OStabAlpha} Any stabilisation map $W_* : \mathcal{M}^\theta(g,+;{P}) \to \mathcal{M}^\theta(g+1,+;{P}')$ of type $\alpha$ induces an epimorphism in homology in degrees $* \leq F(g)$ and an isomorphism in homology in degrees $* \leq F(g)-1$.
	\item\label{it:OStabBeta} Any stabilisation map $W_* : \mathcal{M}^\theta(g,+;{P}) \to \mathcal{M}^\theta(g,+;{P}')$ of type $\beta$ induces an epimorphism in homology in degrees $* \leq G(g)$ and an isomorphism in homology in degrees $* \leq G(g)-1$.
	\item\label{it:OStabGamma} Any stabilisation map $W_* : \mathcal{M}^\theta(g,+;{P}) \to \mathcal{M}^\theta(g,+;{P}')$ of type $\gamma$ induces an isomorphism in homology in degrees $* \leq G(g)$. It always induces an epimorphism in homology in degrees $* \leq G(g)+1$, and induces an epimorphism in homology in all degrees as long as $P' \neq \emptyset$.
\end{enumerate}
\end{thm}

\begin{thm}\label{thm:StabNonorientableSurfaces}
Let $\theta : \X \to BO(2)$ be a tangential structure which stabilises on $\pi_0$ at genus $h'$ for projective planes and is $k'$-trivial for projective planes. If $H'$ is the function given in Definition \ref{defn:Nonorientable:Range}, then:
\begin{enumerate}[(i)]
\item\label{it:NOStabMu} Any stabilisation map $W_* : \mathcal{M}^\theta(g,-;{P}) \to \mathcal{M}^\theta(g+1,-;{P}')$ of type $\mu$ induces an epimorphism in homology in degrees $* \leq H'(g)$ and an isomorphism in homology in degrees $* \leq H'(g)-1$.

\end{enumerate}
Suppose in addition that $\theta$ stabilises for non-orientable surfaces on $\pi_0$ at some genus. Then:
\begin{enumerate}[(i)]
\setcounter{enumi}{1}
	\item\label{it:NOStabAlpha} Any stabilisation map $W_* : \mathcal{M}^\theta(g,-;{P}) \to \mathcal{M}^\theta(g+2,-;{P}')$ of type $\alpha$ induces an isomorphism in homology in degrees $* \leq H'(g)-1$.

	\item\label{it:NOStabBeta} Any stabilisation map $W_* : \mathcal{M}^\theta(g,-;{P}) \to \mathcal{M}^\theta(g,-;{P}')$ of type $\beta$ induces an isomorphism in homology in degrees $* \leq H'(g)-1$.

	\item\label{it:NOStabGamma} Any stabilisation map $W_* : \mathcal{M}^\theta(g,-;{P}) \to \mathcal{M}^\theta(g,-;{P}')$ of type $\gamma$ induces an isomorphism in homology in degrees $* \leq H'(g)-1$. It always induces an epimorphism in homology in degrees $* \leq H'(g)$, and induces an epimorphism in homology in all degrees as long as $P' \neq \emptyset$.

\end{enumerate}
\end{thm}

\begin{rem}\label{rem:BetaGamma}
If a stabilisation map of type $\beta$ creates a new boundary component whose boundary condition bounds a disc, then it has a right inverse by gluing in that disc, and hence is injective in all degrees on homology. In the case of orientable surfaces this increases the range in which it is an isomorphism by 1. For tangential structures such as $BO(2) \to BO(2)$, $BSO(2) \to BO(2)$, and these along with maps to a simply-connected background space, all boundary conditions bound a disc, and so all maps of type $\beta$ are injective in homology in all degrees. 

Similarly, whenever a stabilisation map of type $\gamma$ is not closing the last boundary component, there is a map of type $\beta$ which is a left inverse to it (this uses in an essential way our convention that the total space $\X$ defining a tangential structure $\theta$ is path connected). Thus, such a map is surjective in all degrees on homology, and is an isomorphism in the same range that the corresponding map of type $\beta$ is. This observation shows that the stability ranges for maps of type $\gamma$ and $P' \neq \emptyset$ in Theorem \ref{thm:StabOrientableSurfaces} or \ref{thm:StabNonorientableSurfaces} follow from the corresponding stability range for maps of type $\beta$. Stability for maps of type $\gamma$ when $P' = \emptyset$ requires a different argument, which we give in Section \ref{sec:LastBoundary}.
\end{rem}

\subsection{Proof of the qualitative stability theorem (Theorem \ref{thm:QualStabThm})}

For simplicity let us consider orientable surfaces. If $H_0(\mathcal{M}^\theta)$ stabilises then so must $\pi_0(\mathcal{M}^\theta)$, so $\theta$ stabilises on $\pi_0$ for orientable surfaces at some genus $h$. By Proposition \ref{prop:FormalkTriv} it is then $k$-trivial for some $k$, so by Theorem \ref{thm:StabOrientableSurfaces} it satisfies homology stability for orientable surfaces in a range of degrees given by the functions $F$ and $G$ of Definition \ref{defn:Orientable:Range}. By the estimates at the end of Section \ref{sec:StabRangeOrientable}, these functions diverge.

\subsection{Oriented surfaces}\label{sec:OrientedSurfaces}
We consider the tangential structure $\theta :BSO(2) \to BO(2)$. By Proposition \ref{prop:OrientedUnoriented1Triviality}, this tangential structure is 1-trivial and stabilises at genus 0.

It is easy to verify that in this case Definition \ref{defn:Orientable:Range} gives $F(g) = X(g) = \lfloor\frac{2g+1}{3}\rfloor$ and $G(g) = Y(g) = \lfloor\frac{2g}{3}\rfloor$. Furthermore, all boundary conditions on orientable surfaces bound a disc, so all maps of type $\beta$ are injective on homology in all degrees. In terms of the notation used in Section \ref{sec:QuantOrientable}: $\alpha(g)_*$ is an epimorphism for $3* \leq 2g+1$ and an isomorphism for $3* \leq 2g-2$; $\beta(g)_*$ is an isomorphism for $3* \leq 2g$ and always a monomorphism; $\gamma(g)_*$ is an isomorphism for $3* \leq 2g$ and an epimorphism in all degrees as long as one is not closing the last boundary component, or in degrees $3* \leq 2g+3$ if closing the last boundary component. 

This stability range coincides with the range recently obtained by Boldsen \cite{Boldsen} for surfaces with boundary, and improves it slightly for closing the last boundary.

\subsection{Non-orientable surfaces}\label{sec:DerivingNonorientableSurfacesRange}

We consider the tangential structure $\theta: BO(2) \to BO(2)$. By Proposition \ref{prop:OrientedUnoriented1Triviality}, this tangential structure is both 1-trivial and 1-trivial for projective planes, stabilises for projective planes at genus 1, and stabilises for non-orientable surfaces at genus 0.

It is easy to see that Definition \ref{defn:Nonorientable:Range} gives $H'(g) = Z'(g) = \lfloor\tfrac{g}{3}\rfloor$ in this case. Furthermore, all boundary conditions for this tangential structure bound a disc so maps of type $\beta$ are injective on homology in all degrees. In terms of the notation used in Section \ref{sec:QuantNonorientable}: $\mu(g)_*$ is an epimorphism for $3* \leq g$ and an isomorphism for $3* \leq g-3$; $\alpha(g)_*$ is an isomorphism for $3* \leq g-3$; $\beta(g)_*$ is an isomorphism for $3* \leq g-3$ and a monomorphism in all degrees; $\gamma(g)_*$ is an isomorphism for $3* \leq g-3$ and an epimorphism in all degrees as long as one is not closing the last boundary component, or in degrees $3* \leq g$ if closing the last boundary component.  

This stability range improves on the previously best known range, due to Wahl \cite{Wahl}, which is of slope $1/4$.

\subsection{Surfaces with maps to a background space}\label{sec:DerivingBGSpaceSurfacesRange}

Let $Y$ be a simply connected space, and consider the tangential structure $\theta : BSO(2)\times Y \to BO(2)$. If we consider the boundary condition on $\Sigma_{g,b}$ where all the boundary is sent to a basepoint in $Y$,  the moduli space we obtain is equivalent to the space $\mathcal{S}_{g,b}(Y)$ introduced by Cohen and Madsen \cite{CM}.

By Propositions \ref{prop:OrientedUnoriented1Triviality} and \ref{prop:AddingBackgroundSpace}, this tangential structure is 1-trivial and stabilises at genus 0, so it has the same stability range as oriented surfaces: $\alpha(g)_*$ is an epimorphism for $3* \leq 2g+1$ and an isomorphism for $3* \leq 2g-2$; $\beta(g)_*$ is an isomorphism for $3* \leq 2g$ and always a monomorphism; $\gamma(g)_*$ is an isomorphism for $3* \leq 2g$ and is an epimorphism in all degrees as long as one is not closing the last boundary component, or in degrees $3* \leq 2g+3$ if closing the last boundary component. 

This stability range slightly improves the range recently obtained by Boldsen \cite{Boldsen} for surfaces with boundary, but crucially also works for closing the last boundary.

\vspace{2ex}

Similarly, we can consider the tangential structure $\theta : BO(2) \times Y \to BO(2)$ for non-orientable surfaces. By Propositions \ref{prop:OrientedUnoriented1Triviality} and \ref{prop:AddingBackgroundSpace} we see that this has the same stability range as Section \ref{sec:DerivingNonorientableSurfacesRange}: $H'(g) = Z'(g) = \lfloor\tfrac{g}{3}\rfloor$.

\section{The consequence of absorption}\label{sec:ZeroInHomology}

We now take the first step in the proof of Theorems \ref{thm:StabOrientableSurfaces} and \ref{thm:StabNonorientableSurfaces}. As promised in Section \ref{sec:kTriviality}, the purpose of the notion of $k$-triviality is to ensure that certain compositions of approximate augmentation maps of length $k$ induce the zero map in homology in a certain range of degrees. This will in fact be a more general property associated to a cobordism $U : L' \leadsto L$ of outer boundary conditions which contains $T$ absorbing an inner cobordism $W : (0, Q) \leadsto (1,Q')$. in this section we shall prove a general proposition in this direction, and later (in Corollaries \ref{cor:kTrivMeansZero} and \ref{cor:kTrivMeansZero2}) explain its consequences for the notion of $k$-triviality.

\begin{prop}\label{prop:AbsorptionProp}
Let ${W} : (0, {Q}_{\sticks}) \leadsto (1,{Q}'_{\cross})$ be an inner cobordism and $U : L' \leadsto L$ be an outer cobordism which is standard on $\bB$, such that $U$ absorbs $W$ (in the sense of Definition \ref{defn:absorbs}). Then the map induced on relative homology by the commutative square
\begin{equation}\label{eq:SquareNeedsDiag}
\begin{gathered}
\xymatrix{
\mathcal{N}^\theta_{L'}(1,Q) \ar[r]^-{(W+e_0)_*} \ar[d]^-{U^*}& \mathcal{N}^\theta_{L'}(2,Q') \ar[d]^-{U^*} \\
\mathcal{N}^\theta_L(0,Q) \ar[r]^-{W_*}& \mathcal{N}^\theta_L(1,Q')
}
\end{gathered}
\end{equation}
factors as
\begin{equation}\label{eq:factorisation}
\begin{gathered}
\xymatrix{
H_*(\mathcal{N}^\theta_{L'}(2,Q'), \mathcal{N}^\theta_{L'}(1,Q)) \ar[d]^-{(U^*)_*} \ar[r]^-\partial & H_{*-1}(\mathcal{N}^\theta_{L'}(1,Q)) \ar[d]^-\Delta\\
H_*(\mathcal{N}^\theta_L(1,Q'), \mathcal{N}^\theta_L(0,Q)) & H_*(\mathcal{N}^\theta_L(1,Q')) \ar[l]
}
\end{gathered}
\end{equation}
for a certain map $\Delta$. Furthermore, there is a cobordism of inner boundary conditions ${Y} : (0,{Q}_{\cross}'') \leadsto (1,{Q}_{\sticks})$ having a single relative 1-handle, such that the composition
\begin{align*}
H_{*-1}(\mathcal{N}^\theta_{L'}(0,Q'')) &\overset{Y_*}\lra H_{*-1}(\mathcal{N}^\theta_{L'}(1,Q)) \\
\overset{\Delta}\lra H_*(\mathcal{N}^\theta_L(1,Q')) &\lra H_*(\mathcal{N}^\theta_L(1,Q'), \mathcal{N}^\theta_L(0,Q))
\end{align*}
is zero. In particular, the map induced by $U^*$ on relative homology is zero on the subgroup
$$\partial^{-1}\mathrm{Im}\left(Y_* : H_{*-1}(\mathcal{N}^\theta_{L'}(0,Q'')) \to H_{*-1}(\mathcal{N}^\theta_{L'}(1,Q))\right).$$
\end{prop}

Before we begin with the proof of this proposition, we require a technical lemma concerning commutative squares which admit a diagonal map up to homotopy. In earlier drafts of this paper the use of this lemma was rather implicit, and it has been made explicit, at the level of spaces rather than homology, by several authors \cite[Lemma 6.2]{MartinPalmer} \cite[Lemma 6.2]{CR-W}. We give another proof, only at the level of homology.

\begin{lem}\label{lem:Factorisation}
Suppose we are given a commutative square
\begin{equation*}
\xymatrix{
A \ar[r]^f \ar[d]_h& B \ar@{-->}[ld]_d\ar[d]^i\\
C \ar[r]_g& D,
}
\end{equation*}
a map $d : B \to C$, and homotopies $F : d \circ f \simeq h$ and $G : g \circ d \simeq i$. We obtain homotopies
$$g \circ F : g \circ d \circ f \simeq g \circ h \quad\quad G \circ f : g \circ d \circ f \simeq i \circ f$$
and as $g \circ h = i \circ f$ we have a pair of homotopies between the same two maps. These glue to a map $\delta : ([0,2] / \partial[0,2]) \times A \to D$ which starts at $g \circ d \circ f$, then does the homotopy $G \circ f$, then does the reverse of the homotopy $g \circ F$. Let $\sigma \in H_1([0,2] / \partial[0,2])$ be the fundamental class, and define $\Delta = \Delta(d, F, G) : H_{*-1}(A) \to H_{*}(D)$ to be the map on homology $\delta_*(\sigma \otimes -)$.

Then there is  a factorisation
$$(i, h)_* : H_*(B,A) \overset{\partial}\lra H_{*-1}(A) \overset{\Delta}\lra H_*(D) \lra H_*(D,C)$$
of the induced map of relative homology.
\end{lem}

\begin{proof}
Without loss of generality, we may assume that $f$ is the inclusion of a closed subspace (by replacing $B$ with the mapping cylinder of $f$). Consider the space
$$X = ([0,1] \times B)\cup ([1,2] \times A) \subset [0,2] \times B$$
and define a map $\varphi : X \to D$ by $\varphi\vert_{[0,1] \times B} = G$ and $\varphi\vert_{[1,2] \times A}(t, a) = g \circ F(2-t, a)$. Similarly, define $\phi : \{0\} \times B \cup [1,2] \times A \to C$ by $\phi\vert_{\{0\} \times B} = d$ and $\phi\vert_{[1,2] \times A}(t, a) = F(2-t, a)$. We identify the pair $(B,A)$ as $(\{1\} \times B, \{1\} \times A)$, and have the following commutative diagram
\begin{equation*}
\xymatrix{
(B,A) \ar[d]^-\simeq \ar@/_4pc/[dd]_{(i,h)}\\
 (X, [1,2] \times A) \ar[r] \ar[d]^{(\varphi, \phi\vert_{[1,2] \times A})} & (X, \{0\} \times B \cup [1,2] \times A) \ar[ld]^{(\varphi, \phi)} & ([0,2] \times A, \{0, 2\} \times A) \ar[l]\\
(D,C).
}
\end{equation*}
The factorisation claim now follows, as
$$H_*(B, A) \lra H_*(X, \{0\} \times B \cup [1,2] \times A) \overset{\cong}\lla H_*([0,2] \times A, \{0,2\} \times A) \cong H_{*-1}(A)$$
is the usual boundary map, and
$$H_{*-1}(A) \cong H_*([0,2] \times A, \{0,2\} \times A) \overset{(\varphi, \phi)\vert_{[0,2] \times A}}\lra H_*(D,C)$$
is the map $\Delta$ we have constructed, followed by $H_*(D) \to H_*(D,C)$.
\end{proof}

\begin{proof}[Proof of Proposition \ref{prop:AbsorptionProp}]
By definition of $U$ absorbing $W$, there is a cobordism $(1,Z) : L'_{Q'} \leadsto L_{Q} \in \mathcal{C}_{\theta, T}$ such that
\begin{enumerate}[(i)]
\item\label{it:Homotopy1} there is a path from $(1,Z) \circ M(W, L')$ to $M(Q,U) \circ (1,[0,1] \times L'_{Q})$ in the space $\mathcal{C}_{\theta, T}(L'_{Q},L_{Q})$, and

\item\label{it:Homotopy2} there is a path from $M(W, L) \circ (1,Z)$ to $(1,[0,1] \times L_{Q'}) \circ M(Q',U)$ in the space $\mathcal{C}_{\theta, T}(L'_{Q'},L_{Q'})$.
\end{enumerate}
Let us define a manifold
$$\bar{Z} := ([0,2] \times L') \cup (Z + 2\cdot e_0) \subset ([0,2] \times \bA) \cup ([2,3] \times \bR^\infty)$$
and so a map
\begin{align*}
d: \mathcal{N}^\theta_{L'}(2,Q') & \lra \mathcal{N}^\theta_L(0,Q)\\
X &\longmapsto X \cup \bar{Z} - 3\cdot e_0.
\end{align*}
This gives a diagonal map for the square \eqref{eq:SquareNeedsDiag}, and we will show that it makes both triangles commute up to homotopy. In order to explain the homotopies making the triangles commute, it is convenient to use the graphical calculus shown in Figure \ref{fig:Absorption}.

\begin{figure}[h]
\centering
\includegraphics[bb = 0 0 383 71]{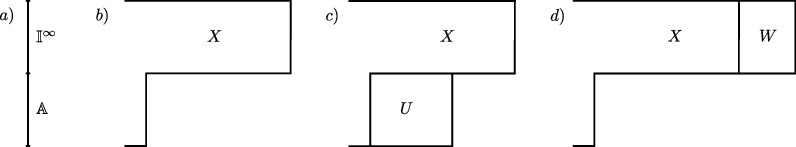}
\caption{a) We represent manifolds in $[0,t] \times \bR^\infty$ by pictures in $[0,t] \times \bI$, by putting the part inside $\bI^\infty$ in $[0,t] \times [0,1]$ and the part inside $\bA$ in $[0,t] \times [-1,0]$; b) A manifold $X \in \mathcal{N}^\theta_{L}(t, Q)$; c) Gluing on an outer cobordism $U : L \leadsto L'$; d) Gluing on an inner cobordism $W : Q \leadsto Q'$.}\label{fig:Absorption}
\end{figure}

Firstly, in the top triangle we have the homotopy given in Figure \ref{fig:Absorption2}, from $U^*$ to $d \circ (W + e_0)_*$.

\begin{figure}[h]
\centering
\includegraphics[bb = 0 0 383 71]{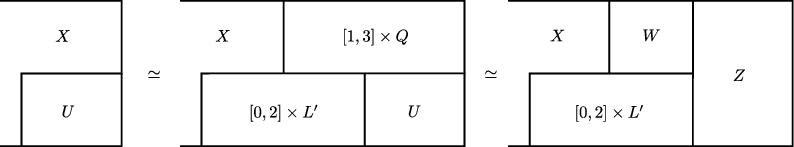}
\caption{}\label{fig:Absorption2}
\end{figure}

The homotopy of Figure \ref{fig:Absorption2} starts from the map $X \mapsto U^*(X) = X \cup (U+e_0) - e_0$, changes it by a homotopy to the map
$$X \longmapsto (X \cup ([0,2]\times L') \cup ([1,3]\times Q) \cup (U+3e_0))-3e_0$$
by stretching, and then uses the path from 
$$([1,3]\times Q) \cup ([1,2] \times L') \cup (U+3e_0) = M(Q,U) \circ ([0,1] \times L') + e_0$$
to
$$(W+e_0) \cup ([1,2] \times L') \cup (Z + 2\cdot e_0) = Z \circ M(W, L') + e_0$$
given by item (\ref{it:Homotopy1}) above.

Secondly, in the bottom triangle we have the homotopy from $U^*$ to $W_* \circ d$ given by postcomposing the homotopy shown in Figure \ref{fig:Absorption3} with a homotopy which shrinks down $[3,4] \times L'$ and $[4,5] \times Q'$.

\begin{figure}[h]
\centering
\includegraphics[bb = 0 0 313 71]{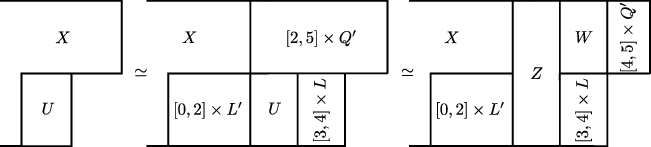}
\caption{}\label{fig:Absorption3}
\end{figure}

The homotopy of Figure \ref{fig:Absorption3} starts from the map $X \mapsto U^*(X) = X \cup (U+e_0) - e_0$, and changes it by a homotopy to the map
$$X \longmapsto (X \cup([0,2] \times L') \cup ([2,5] \times Q') \cup (U+2e_0) \cup ([3,4] \times L)) -4e_0$$
by stretching. Then it uses the path from
$$([2,4] \times Q') \cup (U+2e_0) \cup ([3,4] \times L) = ([0,1] \times L_{Q'}) \circ M(Q',U) + 2e_0$$
to
$$(Z + 2e_0) \cup (W + 3e_0) \cup ([3,4] \times L) = M(W,L) \circ Z + 2e_0$$
given by item (\ref{it:Homotopy2}) above.

To this diagonal map $d$ and these homotopies we may apply Lemma \ref{lem:Factorisation}, which gives the factorisation \eqref{eq:factorisation} where the map $\Delta$ arises from the self-homotopy of the map
$$(- \cup U \cup (W+e_0))-e_0 : \mathcal{N}^\theta_{L'}(1,Q) \lra \mathcal{N}^\theta_{L}(1,Q')$$
that we have constructed. This proves the first part of the proposition.

Considering the morphism $V:=(1,W \cup (U+e_0)) : L'_Q \leadsto L_{Q'} \in \mathcal{C}_{\theta,T}$, an equivalent model for this above map is
\begin{align*}
V_* : \mathcal{N}^\theta(L'_Q) &\lra \mathcal{N}^\theta(L_{Q'})\\
X &\longrightarrow (X \cup V) -e_0,
\end{align*}
and in this model $\Delta$ is induced by a loop $\gamma : S^1 \to \mathcal{C}_{\theta, T}(L'_Q, L_{Q'})$ based at the point $V$. This loop is recalled graphically in Figure \ref{fig:Absorption4}.
\begin{figure}[h]
\centering
\includegraphics[bb = 0 0 312 127]{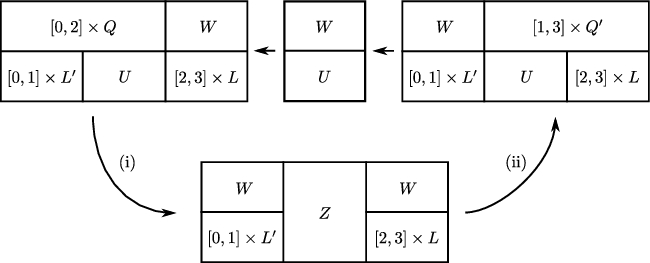}
\caption{}\label{fig:Absorption4}
\end{figure}
We wish to find a cobordism of inner boundary conditions $Y : Q''_{\cross} \leadsto Q_{\sticks}$ having a single relative 1-handle, and a loop $\gamma'$ in $\mathcal{C}_{\theta, T}(L'_{Q''}, L_{Q})$ based at some $V'$, so that for the associated map $\delta' : S^1 \times \mathcal{N}^\theta(L'_{Q''}) \to \mathcal{N}^\theta(L_{Q})$ the square
\begin{equation}\label{eq:NecessarySquare}
\begin{gathered}
\xymatrix{
S^1 \times \mathcal{N}^\theta(L'_{Q''}) \ar[d]^-{\mathrm{Id} \times Y_*}\ar[r]^-{\delta'}& \mathcal{N}^\theta(L_{Q}) \ar[d]^-{W_*}\\
S^1 \times \mathcal{N}^\theta(L'_Q) \ar[r]^-\delta & \mathcal{N}^\theta(L_{Q'})
}
\end{gathered}
\end{equation}
commutes up to homotopy. This will imply that $\mathrm{Im}(\Delta \circ Y_* : H_{*-1}(\mathcal{N}^\theta(L'_{Q''})) \to H_*(\mathcal{N}^\theta(L_{Q'})))$ is contained inside $\mathrm{Im}(W_* : H_*(\mathcal{N}^\theta(L_Q)) \to H_*(\mathcal{N}^\theta(L_{Q'}))$, and passing back to the original model this proves the second part of the proposition.

So far we have not used that all the cobordisms and paths of cobordisms that we have constructed are standard inside $\bB$ (and so that the loop $\gamma$ is one of cobordisms standard inside $\bB$), but we shall now do so. Consider a cobordism of inner boundary conditions ${Y} : {Q}''_{\cross} \leadsto {Q}_{\sticks}$ having a single relative 1-handle, so there are maps
\begin{equation}\label{eq:NeedDashedMap}
\begin{gathered}
\xymatrix{
 & \mathcal{C}_{\theta,T}(L'_{Q''}, L_Q) \ar[d]^-{W \circ -}\\
\mathcal{C}_{\theta,T}(L'_{Q}, L_{Q'}) \ar@{-->}[ur] \ar[r]^-{-\circ Y}& \mathcal{C}_{\theta,T}(L'_{Q''}, L_{Q'}).
}
\end{gathered}
\end{equation}
We claim that the $\theta$-structure on $Y$ (and $Q''$) can be chosen so that there is a dashed map making this triangle commute up to homotopy. To see this, note that after identifying each of these morphism spaces in $\mathcal{C}_{\theta,T}$ with a space of nullbordisms the diagram becomes
\begin{equation*}
\xymatrix{
 & \mathcal{N}^\theta(P'') \ar[d]^-{- \cup B}\\
\mathcal{N}^\theta(P') \ar@{-->}[ur] \ar[r]^-{- \cup A}& \mathcal{N}^\theta(P),
}
\end{equation*}
where
$$P \cong (Q''_\cross \cup (L' \setminus \mathrm{int}(T))) \cup_{\partial T} (Q'_\cross \cup (L \setminus \mathrm{int}(T)))$$
and the cobordisms $A : P' \leadsto P$ and $B: P'' \leadsto P$ are both obtained relative to $P$ by attaching a single 1-handle, along $Q''_\cross$ and $Q'_\cross$ respectively. The pair of oriented intervals ${Q}''_\cross$ in $P$ are isotopic to the pair of intervals ${Q}'_\cross$ with the opposite orientation, and reversing the orientation does not change the diffeomorphism type of surface obtained. Thus we may choose the $\theta$-structure on $Y$ so that $P'$ and $P''$ are isomorphic in $\mathcal{C}_\theta$, and a choice of such an isomorphism gives the required dotted arrow.

For this $Y$, applying the dashed map in \eqref{eq:NeedDashedMap} to the loop $\gamma$ in $\mathcal{C}_{\theta,T}(L'_{Q}, L_{Q'})$ gives a loop $\gamma'$ in $\mathcal{C}_{\theta,T}(L'_{Q''}, L_Q)$, and by constructing this choice makes the square \eqref{eq:NecessarySquare} commute up to homotopy, as required.
\end{proof}

\section{Proof of Theorem \ref{thm:StabOrientableSurfaces}}\label{sec:PfOrientable}

The statement of Theorem \ref{thm:StabOrientableSurfaces} (\ref{it:OStabAlpha}) concerns the effect on homology of a stabilisation map $$W_* : \mathcal{M}^\theta(g,+;{P}) \lra \mathcal{M}^\theta(g+1,+;{P}')$$ of type $\alpha$, saying that it is surjective in degrees $* \leq F(g)$ and injective in degrees $* \leq F(g)-1$. We may equivalently phrase this as saying that the relative homology groups $H_*(\mathcal{M}^\theta(g+1,+;{P}'), \mathcal{M}^\theta(g,+;{P}))$ defined by the map $W_*$ vanish in degrees $* \leq F(g)$. We will prove this latter statement, simultaneously with the corresponding statement for Theorem \ref{thm:StabOrientableSurfaces} (\ref{it:OStabBeta}), by induction on $g$. The induction is based on the map of resolutions described in Proposition \ref{prop:StabMapsOnRes}, and the description of the $p$-simplices of these resolutions given in Section \ref{sec:Layers}. In fact, these ingredients already prove vanishing on the above relative homology group in degrees $* < X(g)$ (cf.\ Figure \ref{fig:sseqNew1}). To increase the vanishing range to $* \leq F(g)$ we use Proposition \ref{prop:AbsorptionProp}, and a somewhat technical spectral sequence comparison argument.

Before beginning the proof of Theorem \ref{thm:StabOrientableSurfaces} in earnest, we introduce some convenient notation, and record the consequence of Proposition \ref{prop:AbsorptionProp} which we shall use.

\begin{defn}\label{defn:CpctNotation}
Let ${W} : (0,{Q}) \leadsto (1,{Q}')$ be an inner cobordism, and ${L}$ be an outer boundary condition. If $W \cup ([0,1] \times L) : {L}_{{Q}} \leadsto {L}_{{Q}'}$ is a stabilisation map of type $\alpha$ then we write $\alpha_{{L}}(g,+;W)$ for the pair of spaces given by the map
$$W_* : \mathcal{M}^\theta_{{L}}(g,+;0,Q) \lra \mathcal{M}^\theta_{{L}}(g+1,+;1,Q').$$
Similarly, if $W \cup ([0,1] \times L)$ is a stabilisation map of type $\beta$ then we write $\beta_{{L}}(g,+;W)$ for the pair of spaces given by the map
$$W_* : \mathcal{M}^\theta_{{L}}(g,+;0,Q) \lra \mathcal{M}^\theta_{{L}}(g,+;1,Q').$$
\end{defn}

If ${W} : (0,{Q}_{\sticks}) \leadsto (1,{Q}'_{\cross})$ is an elementary stabilisation map and $U : L' \leadsto L$ is an outer cobordism such that $({W},U)$ is an orientable test pair of height $k$, then restricting the commutative square \eqref{eq:SquareNeedsDiag} to connected orientable surfaces of particular genera gives a commutative square
\begin{equation*}
\begin{gathered}
\xymatrix{
\mathcal{M}^\theta_{L'}(h,+;1,Q) \ar[r]^-{(W+e_0)_*} \ar[d]^-{U^*}& \mathcal{M}^\theta_{L'}(h',+;2,Q') \ar[d]^-{U^*} \\
\mathcal{M}^\theta_L(g,+;0,Q) \ar[r]^-{W_*}& \mathcal{M}^\theta_L(g',+;1,Q')
}
\end{gathered}
\end{equation*}
for certain $g$, $g'$, $h$ and $h'$, and if $\theta$ is $k$-trivial, so $U$ absorbs $W$, then Proposition \ref{prop:AbsorptionProp} has a consequence for the induced map
\begin{equation*}
\xymatrix{
H_*(\mathcal{M}^\theta_{L'}(h',+;2,Q'),\mathcal{M}^\theta_{L'}(h,+;1,Q)) \ar[d]^-{(U^*)_*}\\
H_*(\mathcal{M}^\theta_L(g',+;1,Q'), \mathcal{M}^\theta_L(g,+;0,Q))
}
\end{equation*}
on relative homology. The following corollary records these consequences in the various cases that we shall require, using the notation introduced in Definition \ref{defn:CpctNotation}.

\begin{cor}\label{cor:kTrivMeansZero}
Let $({W}, U)$ be an orientable test pair of height $k$.

\begin{enumerate}[(i)]
\item\label{it:cor:kTrivMeansZero:1} If $k=2K$ and the stabilisation map $W \cup ([0,1] \times L)$ is of type $\alpha$, then the induced map on relative homology is of the form
\begin{align*}
(U^*)_* : H_*(\alpha_{{L}'}(g-K,+;W)) \lra H_*(\alpha_{{L}}(g,+;W))
\end{align*}
and is zero in those homological degrees $*$ where all stabilisation maps
$$Y_*: H_{*-1}(\mathcal{M}^\theta_{{L}'}(g-K,+;0,Q'')) \lra H_{*-1}(\mathcal{M}^\theta_{{L}'}(g-K,+;1,Q))$$
of type $\beta$ are surjective.

\item If $k=2K+1$ and the stabilisation map $W \cup ([0,1] \times L)$ is of type $\alpha$, then the induced map on relative homology is of the form
\begin{align*}
(U^*)_* : H_*(\beta_{{L}}(g-K,+;W)) \lra H_*(\alpha_{{L}}(g,+;W))
\end{align*}
and is zero in those homological degrees $*$ where all stabilisation maps
$$Y_*: H_{*-1}(\mathcal{M}^\theta_{{L}'}(g-K-1,+;0,Q'')) \lra H_{*-1}(\mathcal{M}^\theta_{{L}'}(g-K,+;1,Q))$$
of type $\alpha$ are surjective.

\item If $k=2K$ and the stabilisation map $W \cup ([0,1] \times L)$ is of type $\beta$, then the induced map on relative homology is of the form
\begin{align*}
(U^*)_* : H_*(\beta_{{L}}(g-K,+;W)) \lra H_*(\beta_{{L}}(g,+;W))
\end{align*}
and is zero in those homological degrees $*$ where all stabilisation maps
$$Y_*: H_{*-1}(\mathcal{M}^\theta_{{L}'}(g-K-1,+;0,Q'')) \lra H_{*-1}(\mathcal{M}^\theta_{{L}'}(g-K,+;1,Q))$$
of type $\alpha$ are surjective.

\item If $k=2K+1$ and the stabilisation map $W \cup ([0,1] \times L)$ is of type $\beta$, then the induced map on relative homology is of the form
\begin{align*}
(U^*)_* : H_*(\alpha_{{L}}(g-K-1,+;W)) \lra H_*(\beta_{{L}}(g,+;W))
\end{align*}
and is zero in those homological degrees $*$ where all stabilisation maps
$$Y_*: H_{*-1}(\mathcal{M}^\theta_{{L}'}(g-K-1,+;0,Q'')) \lra H_{*-1}(\mathcal{M}^\theta_{{L}'}(g-K-1,+;1,Q))$$
of type $\beta$ are surjective.
\end{enumerate}
\end{cor}

Now, suppose that $\theta$ is a tangential structure that stabilises on $\pi_0$ for orientable surfaces at genus $h$ and is $k$-trivial. Let $F, G, X, Y : \bZ \to \bZ$ be the functions given in Definition \ref{defn:Orientable:Range}, using which we may express the following statements:
\begin{equation}\tag{F$_y$}
\text{For all $g \leq y$ and all $W$ and $L$, }  H_*(\alpha_{{L}}(g,+;W)) = 0 \text{ in degrees } * \leq F(g).
\end{equation}
\begin{equation}\tag{G$_y$}
\text{For all $g \leq y$ and all $W$ and $L$, } H_*(\beta_{{L}}(g,+;W)) = 0 \text{ in degrees } * \leq G(g).
\end{equation}

If we have proved these statements for all $y$, then we have proved parts (\ref{it:OStabAlpha}) and (\ref{it:OStabBeta}) of Theorem \ref{thm:StabOrientableSurfaces} (using Lemma \ref{lem:EltStab}, which shows that is is enough to consider elementary stabilisation maps); by Remark \ref{rem:BetaGamma} part (\ref{it:OStabGamma}) follows from part (\ref{it:OStabBeta}), and so we have proved Theorem \ref{thm:StabOrientableSurfaces}.

In proving these two statements, we must in parallel prove two more technical statements, via an induction which combines all four statements. Let us explain these two more technical statements. By Proposition \ref{prop:FibResOfPairs} (\ref{it:FibResOfPairs:ResAlpha}), when an elementary stabilisation map $W_* : \mathcal{M}^\theta_{{L}}(g,+;0,Q) \to \mathcal{M}^\theta_{{L}}(g+1,+;1,Q')$ of type $\alpha$ is resolved (using resolution data $b : \{\pm1\} \times \bR \hookrightarrow L$), it gives a map
$$\mathcal{B}^\theta_{{L}}(g,+; 0,{Q};b)_0 \lra \mathcal{H}^\theta_{{L}}(g+1,+; 1,{Q}';b)_0$$
on $0$-simplices, which is a map over $A_0(0;b,\ell_b,+) \overset{\sim}\hookrightarrow A_0(1;b,\ell_b,+)$. On fibres over $x \in A_0(0;b,\ell_b,+)$, the map is homotopy equivalent to the elementary stabilisation map of type $\beta$
$$W_*: \mathcal{M}^\theta_{{L}_x}(g,+;0,{Q}) \lra \mathcal{M}^\theta_{{L}_x}(g,+;1,{Q}'),$$
where the new outer boundary condition ${L}_x$ depends on the point $x \in A_0(0;b,\ell_b,+)$ we are working over. Thus we obtain a map of pairs
$$\epsilon_x : \beta_{{L}_x}(g,+;W) \lra \alpha_{{L}}(g,+;W).$$
By choosing one point $x$ in each path component of $A_0(0;b,\ell_b)$, and summing together the maps $\epsilon_x$ on homology, we obtain a map
\begin{equation}\label{eq:AugmentationAlpha}
\epsilon_* : \bigoplus_{\mathclap{[x] \in \pi_0(A_0(0;b,\ell_b,+))}} H_*(\beta_{{L}_x}(g,+;W)) \lra H_*(\alpha_{{L}}(g,+;W)).
\end{equation}

Similarly, for $W_* : \mathcal{M}^\theta_{{L}}(g,+;0,Q) \to \mathcal{M}^\theta_{{L}}(g,+;1,Q')$ an elementary stabilisation map of type $\beta$ we obtain a map
\begin{equation}\label{eq:AugmentationBeta}
\epsilon_* : \bigoplus_{\mathclap{[x] \in \pi_0(A_0(0;b,\ell_b,+))}} H_*(\alpha_{{L}_x}(g-1,+;W)) \lra H_*(\beta_{{L}}(g,+;W)).
\end{equation}

We can now give the additional two statements that we will simultaneously prove:
\begin{equation}\tag{X$_y$}
\text{For all $g \leq y$ and all $W$, $L$, and $b$,  \eqref{eq:AugmentationAlpha} is epi in degrees} \,\, * \leq X(g).
\end{equation}
\begin{equation}\tag{Y$_y$}
\text{For all $g \leq y$ and all $W$, $L$, and $b$, \eqref{eq:AugmentationBeta} is epi in degrees} \,\, * \leq Y(g).
\end{equation}

By assumption, $\theta$ stabilises on $\pi_0$ at genus $h$, so $H_0(\alpha_{{L}}(g,+;W))=0$ and $H_0(\beta_{{L}}(g,+;W))=0$ for all $g \geq h-1$ and all $W$ and $L$, so certainly the statements $F_{h-1}$, $G_{h-1}$, $X_{h-1}$ and $Y_{h-1}$ hold, as each of the functions $F$, $G$, $X$ and $Y$ take the value 0 at $h-1$ and $-1$ below $h-1$. This starts our induction.

The inductive step is provided by the following technical theorem; we leave it to the sceptical reader to convince themselves that these implications do indeed supply the inductive step.

\begin{thm}\label{thm:InductiveStepOrientable}
Suppose the hypotheses of Theorem \ref{thm:StabOrientableSurfaces} hold, then there are implications
\begin{enumerate}[(i)]
	\item\label{it:thm:InductiveStepOrientable:1} $F_{g-1}$, $G_g$ and $Y_{g-1}$ imply $X_g$.
	\item\label{it:thm:InductiveStepOrientable:2} $F_{g-1}$, $G_{g-1}$ and $X_{g-2}$ imply $Y_g$.
	\item\label{it:thm:InductiveStepOrientable:3} If $k=1$, $X_g$ and $F_{g-1}$ imply $F_g$. 

\noindent If $k > 1$, $X_g$, $Y_{g}$ and $\begin{cases}G_{g-K} & \text{if $k=2K$}\\F_{g-K-1}& \text{if $k=2K+1$}\end{cases}$ imply $F_g$.
	\item\label{it:thm:InductiveStepOrientable:4} If $k=1$, $Y_g$ and $G_{g-1}$ imply $G_g$. 

\noindent If $k > 1$, $Y_{g}$, $X_{g-1}$ and $\begin{cases}F_{g-K-1} & \text{if $k=2K$}\\G_{g-K-1}& \text{if $k=2K+1$}\end{cases}$ imply $G_g$.
\end{enumerate}
\end{thm}

\begin{rem}
The argument below can be simplified if we are willing to strengthen Definition \ref{defn:Orientable:Range} (\ref{it:Orientable:XY}) so that the functions defining the stability range are
$$
%X(g) \leq \min(Y(g-1)+1,F(g-1)+1, G(g)+1),\\
%Y(g) \leq \min(X(g-2)+1, F(g-1)+1, G(g-1)+1).\\
X(g) = \min \begin{cases}
F(g-1)+1\\
G(g-1)+1\\
X(g-1)+1\\
Y(g-1)+1\\
0 \text{ if } g \leq 0
\end{cases}\quad\quad
Y(g) = \min \begin{cases}
F(g-2)+1\\
G(g-1)+1\\
X(g-2)+1\\
Y(g-1)+1\\
0 \text{ if } g \leq 1.
\end{cases}$$
In this case, Theorem \ref{thm:InductiveStepOrientable} (\ref{it:thm:InductiveStepOrientable:1}) and (\ref{it:thm:InductiveStepOrientable:2}) may be replaced by the statements that $G_g$ implies $X_g$, and $F_{g-1}$ implies $Y_g$. In the proof below, the difficult Step 3 becomes unnecessary, as the grey dot in Figure \ref{fig:sseqNew1} is zero.

In particular, in the case $k=1$ the reader can check that the functions given by Definition \ref{defn:Orientable:Range} already have the stronger property described above, that is, they satisfy
$$X(g) \leq G(g-1)+1 \quad\quad Y(g) \leq F(g-2)+1.$$
Thus in this case Step 3 of the argument below may be omitted.
\end{rem}

\begin{proof}[Proof of Theorem \ref{thm:InductiveStepOrientable} (\ref{it:thm:InductiveStepOrientable:1}) and (\ref{it:thm:InductiveStepOrientable:2})]

For concreteness, let us prove statement (\ref{it:thm:InductiveStepOrientable:1}); (\ref{it:thm:InductiveStepOrientable:2}) is completely analogous.

Given an elementary stabilisation map $W_* : \mathcal{M}^\theta_{{L}}(g,+;0,Q) \to \mathcal{M}^\theta_{{L}}(g+1,+;1,Q')$ of type $\alpha$ and a certain embedding $b : \{ \pm1\} \times \bR \hookrightarrow L$, in Proposition \ref{prop:StabMapsOnRes} we have explained how it may be covered by a map of resolutions
\begin{equation}\label{eq:InductiveStepOrientable:ResMap}
\begin{gathered}
\xymatrix{
\mathcal{B}^\theta_{{L}}(g,+; 0,{Q};b)_\bullet \ar[d]\ar[r] & \mathcal{H}^\theta_{{L}}(g+1,+; 1,{Q}';b)_\bullet \ar[d]\\
\mathcal{M}^\theta_{{L}}(g,+;0,Q) \ar[r]^-{W_*} & \mathcal{M}^\theta_{{L}}(g+1,+;1,Q').
}
\end{gathered}
\end{equation}

\noindent \textbf{Step 1}. We have shown how the map between spaces of $p$-simplices of these resolutions may be understood using the map of Serre fibrations
\begin{equation}\label{eq:InductiveStepOrientable:Fibnmap}
\begin{gathered}
\xymatrix{
\mathcal{B}^\theta_{{L}}(g,+; 0,{Q};b)_p \ar[d]^-{r_p}\ar[r] & \mathcal{H}^\theta_{{L}}(g+1,+; 1,{Q}';b)_p \ar[d]^-{r_p}\\
A_p(0;b, \ell_b,+) \ar[r]^-\simeq & A_p(1;b, \ell_b,+),
}
\end{gathered}
\end{equation}
and in Proposition \ref{prop:MapOnFibres} we have shown that on each fibre over $x \in A_p(0;b, \ell_b,+)$ this may be modelled as a stabilisation map of type $\beta$,
$$W_* : \mathcal{M}^\theta_{{L}_x}(g-p,+; 0,{Q}) \lra \mathcal{M}^\theta_{{L}_x}(g-p,+; 1,{Q}'),$$
for some outer boundary condition ${L}_x$ which depends on $x$. If we pull back the right-hand fibration to $A_p(0;b, \ell_b,+)$, then we have a map of Serre fibrations over the same base space, and so a relative Serre spectral sequence
\begin{align*}
E^2_{s,t} &= H_s(A_p(0;b, \ell_b,+) ; \underline{H_t}(\beta_{{L}_x}(g-p,+;W))) \\
&\Rightarrow H_{s+t}(\mathcal{H}^\theta_{{L}}(g+1,+; 1,{Q}')_p, \mathcal{B}^\theta_{{L}}(g,+; 0,{Q})_p),
\end{align*}
where $\underline{H_t}(\beta_{{L}_x}(g-p,+;W))$ denotes the system of local coefficients on $A_p(0;b, \ell_b,+)$ having fibre $H_t(\beta_{{L}_x}(g-p,+;W))$ over $x$.
As we have assumed that $G_g$ holds, it follows that $E^2_{s,t}=0$ for $t \leq G(g-p)$ and so
\begin{equation}\label{eq:InductiveStepOrientable:Vanishing}
H_{*}(\mathcal{H}^\theta_{{L}}(g+1,+; 1,{Q}')_p, \mathcal{B}^\theta_{{L}}(g,+; 0,{Q})_p)=0 \quad \text{ for } * \leq G(g-p).
\end{equation}
We can extract slightly more information: there is a surjection
$$\bigoplus_{\mathclap{[x] \in \pi_0(A_p(0;b,\ell_b,+))}} H_t(\beta_{{L}_x}(g-p,+;W)) \lra H_0(A_p(0;b, \ell_b,+) ; \underline{H_t}(\beta_{{L}'}(g-p,+;W))),$$
and in total degree $s+t = G(g-p)+1$ only the group $E^2_{0, G(g-p)+1}$ is non-zero, so composing with the edge homomorphism we find that the natural map
$$\bigoplus_{\mathclap{[x] \in \pi_0(A_p(0;b,\ell_b,+))}} H_t(\beta_{{L}_x}(g-p,+;W)) \lra H_t(\mathcal{H}^\theta_{{L}}(g+1,+; 1,{Q}')_p, \mathcal{B}^\theta_{{L}}(g,+; 0,{Q})_p)$$
is surjective for $t \leq G(g-p)+1$.

\vspace{2ex}

\noindent \textbf{Step 2}. We now study the spectral sequence (\ref{SSRelativeAugmentedRestrictedSimplicialSpace}) from Section \ref{sec:AugSSSpaces} for the map of augmented semi-simplicial spaces \eqref{eq:InductiveStepOrientable:ResMap}, which takes the form
$$E^1_{p,q} = H_q(\mathcal{H}^\theta_{{L}}(g+1,+; 1,{Q}')_p, \mathcal{B}^\theta_{{L}}(g,+; 0,{Q})_p) \quad p \geq -1,\, q \geq 0.$$
It follows from Theorem \ref{thm:ResConnectivity1} that after geometric realisation the homotopy fibres of the vertical maps in \eqref{eq:InductiveStepOrientable:ResMap} are $(g-2)$- and $(g-1)$-connected respectively, and so this spectral sequence converges to zero in degrees $p+q \leq g-1$. We wish to draw a conclusion about the groups $E^1_{-1,q}$ for $q \leq X(g)$, but $X(g) \leq g$ because $X(0) \leq 0$ and $X(g) \leq X(g-1)+1$ by Definition \ref{defn:Orientable:Range} (\ref{it:Orientable:XY}). Thus these groups are in the range where the spectral sequence converges to zero.

By \eqref{eq:InductiveStepOrientable:Vanishing}, $E^1_{p,q}=0$ for $p \geq 0$ and $q \leq G(g-p)$, and there is a surjection
$$\bigoplus_{\mathclap{[x] \in \pi_0(A_p(0;b,\ell_b,+))}} H_{q}(\beta_{{L}_x}(g-p,+;W)) \lra E^1_{p, q}$$
for $q \leq G(g-p)+1$. As $G(g) \geq X(g)-1$ (by Definition \ref{defn:Orientable:Range} (\ref{it:Orientable:XY})) and $G(g-2) \geq X(g)-2$ (by the inequalities $X(g) \leq Y(g-1)+1 \leq G(g-2)+2$ of Definition \ref{defn:Orientable:Range} (\ref{it:Orientable:XY})), a chart of the $E^1$-page of this spectral sequence is as shown in Figure \ref{fig:sseqNew1}.

\begin{figure}[h]
\centering
\includegraphics[bb = 0 0 163 120]{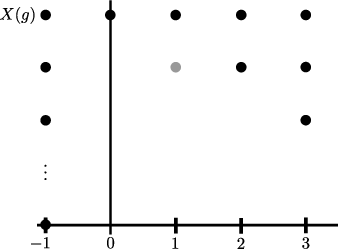}
\caption{The black or grey dots represent unknown groups, and the abscence of a dot represents the trivial group.}\label{fig:sseqNew1}
\end{figure}

The map \eqref{eq:AugmentationAlpha} is the composition
$$\bigoplus_{\mathclap{[x] \in \pi_0(A_0(0;b,\ell_b,+))}} H_{t}(\beta_{{L}_x}(g,+;W)) \lra E^1_{0, t} \overset{d^1}\lra E^1_{-1, t},$$
and we want to show it is surjective for $t \leq X(g)$. As the first map is surjective for $t \leq G(g)+1$, and $X(g) \leq G(g)+1$ by Definition \ref{defn:Orientable:Range} (\ref{it:Orientable:XY}), it will be enough to show that $d^1 : E^1_{0, t} \to E^1_{-1, t}$ is surjective for $t \leq X(g)$. As $E^\infty_{-1,t}=0$ for $t \leq X(g)$, it will thus be enough to show that all other differentials arriving at $E^r_{-1,t}$ are zero for $t \leq X(g)$. 

\vspace{2ex}

\noindent \textbf{Step 3}. From the chart in Figure \ref{fig:sseqNew1}, we see that there is a single possible additional differential, the differential $d^2 : E^2_{1, X(g)-1} \to E^2_{-1, X(g)}$ starting from the dot marked in grey. Because $E^1_{0, X(g)-1}=0$, the group $E^2_{1, X(g)-1}$ is a quotient of $E^1_{1, X(g)-1}$, and so there is a surjection
$$\bigoplus_{\mathclap{[x] \in \pi_0(A_1(0;b,\ell_b,+))}} H_{X(g)-1}(\beta_{{L}_x}(g-1,+;W)) \lra E^1_{1, X(g)-1} \lra E^2_{1, X(g)-1}.$$
It is enough to show that the composition of this surjection with $d^2$ is zero, and to do this, it is enough to show that for each $x \in A_1(0;b,\ell_b,+)$ the map
\begin{equation}\label{eq:NeedsToVanish}
H_{X(g)-1}(\beta_{{L}_x}(g-1,+;W)) \lra E^2_{1, X(g)-1} \overset{d^2}\lra E^2_{-1, X(g)}
\end{equation}
is zero. 

To do this, we shall first take a small detour. Let ${V} : {L}' \leadsto {L}$ be an outer cobordism which contains $[-1,0] \times b(\{\pm1\} \times \bR)$, and as an abstract manifold has a single 1-handle relative to $L$ attached along $b'\vert_{\{\pm 1\} \times \bI}$ for an embedding  $b' : \{ \pm 1\} \times \bR \hookrightarrow L$ which is disjoint from $b$ but isotopic to it, so there is a commutative square
\begin{equation*}
\xymatrix{
\mathcal{M}^\theta_{{L}'}(g,+;1,{Q}) \ar[d]^-{V^*}\ar[r]^-{(W + e_1)_*} & \mathcal{M}^\theta_{{L}'}(g,+;2,{Q}') \ar[d]^-{V^*}\\
\mathcal{M}^\theta_{{L}}(g,+;0,{Q}) \ar[r]^-{W_*} & \mathcal{M}^\theta_{{L}}(g+1,+;1,{Q}'),
}
\end{equation*} 
where the top map is a stabilisation map of type $\beta$. As the outer cobordism $V$ contains $[-1,0] \times b(\{\pm 1\} \times \bR)$, the map $V^*$ lifts to a map between arc resolutions (by extending arcs in $[-1,0] \times b(\{\pm1\} \times \bR)$ using the product structure, and then reparameterising). Thus the above square may be covered by a commutative square of semi-simplicial spaces
\begin{equation*}
\xymatrix{
\mathcal{H}^\theta_{{L}'}(g,+;1,{Q};b)_\bullet \ar[d]^-{V^*}\ar[r]^-{(W + e_1)_*} & \mathcal{B}^\theta_{{L}'}(g,+;2,{Q}';b)_\bullet \ar[d]^-{V^*}\\
\mathcal{B}^\theta_{{L}}(g,+; 0,{Q};b)_\bullet \ar[r]^-{W_*} & \mathcal{H}^\theta_{{L}}(g+1,+; 1,{Q}';b)_\bullet .
}
\end{equation*}
Thus we obtain a map of spectral sequences from that of the top map, which we call $\tilde{E}^r_{p,q}(V)$, to that of the lower map, which is the spectral sequence ${E}^r_{p,q}$ we have been working with above.

Repeating the calculation of Step 1 for the resolution of the top map, and using that $F_{g-1}$ holds, we find that $\tilde{E}^1_{p,q}(V)=0$ for $p \geq 0$ and $q \leq F(g-1-p)$. As $F(g-1-p) \leq X(g-p)-1 \leq X(g)-p-1$, it follows that $\tilde{E}^1_{p,q}(V)=0$ for $p \geq 0$ and $p+q \leq X(g)-1$. Furthermore, there is a surjection
$$\bigoplus_{\mathclap{[x] \in \pi_0(A_1(0;b,\ell_b,+))}} H_{X(g)-1}(\alpha_{{L}'_x}(g-2,+;W)) \lra \tilde{E}^1_{1, X(g)-1}(V) \lra \tilde{E}^2_{1, X(g)-1}(V),$$
because $X(g)-1 \leq F(g-2)+1$ and $\tilde{E}^1_{0, X(g)-1}(V)=0$. 

When choosing the cobordism $R_x : L_x \leadsto L$, we may suppose that it contains the strip $[-1,0] \times b'(\{ \pm 1\} \times \bR)$, and so that $b'$ defines an embedding into $L_x$ too. We now have cobordisms $V \circ R'_x : L'_x \leadsto L' \leadsto L$ and $R_x : L_x \leadsto L$ both ending at $L$. As $V \circ R'_x$ contains a handle relative to $L$ attached inside $b(\{ \pm 1 \} \times \bR)$ and with $\theta$-structure given by $x$, there is an embedding $R_x \hookrightarrow V \circ R'_x$ relative to $L$. Let $V_x : L'_x \leadsto L_x$ be the complementary outer cobordism, which has a single handle relative to $L_x$ attached via $b'\vert_{\{ \pm 1\} \times \bI}$. With this choice, there is a commutative diagram
\begin{equation*}
\xymatrix{
H_{X(g)-1}(\alpha_{{L}'_x}(g-2,+;W)) \ar[r]\ar[d]^-{V^*_x} & \tilde{E}^2_{1, X(g)-1}(V) \ar[d] \ar[r]^-{d^2} & \tilde{E}^2_{-1, X(g)}(V) \ar[d]\\
H_{X(g)-1}(\beta_{{L}_x}(g-1,+;W)) \ar[r]& {E}^2_{1, X(g)-1} \ar[r]^-{d^2} & {E}^2_{-1, X(g)}.
}
\end{equation*}

We claim that the right-hand vertical map is zero: the image of the map
$$H_q(\beta_{{L}'}(g,+;W)) = \tilde{E}^1_{-1, q}(V) \lra {E}^1_{-1, q} = H_q(\alpha_{{L}}(g,+;W))$$
is contained in the image of the differential $d^1 : {E}^1_{0, q} \to {E}^1_{-1, q}$, because the pair $\beta_{{L}'}(g,+;W)$ may be obtained as a particular fibre of the map \eqref{eq:InductiveStepOrientable:Fibnmap}: by construction, $V$ contains a single handle relative to $L$, which may be taken to be attached along the map $b\vert_{\{\pm 1\} \times \bI}$. Thus the map $\tilde{E}^2_{-1, q} \to {E}^2_{-1, q}$ is trivial.

Hence the map \eqref{eq:NeedsToVanish} is trivial on the image of 
$$V_x^* : H_{X(g)-1}(\alpha_{{L}'_x}(g-2,+;W)) \lra H_{X(g)-1}(\beta_{{L}_x}(g-1,+;W)).$$
However, this discussion holds for any choice of $V$, and we may choose $V$ so that $V_x$ has a single handle relative to $L_x$ attached via $b'\vert_{\{\pm 1\} \times \bI}$ but having any $\theta$-structure we like. Thus, summing over all possible $V$'s, we find that the map \eqref{eq:NeedsToVanish} is trivial on the image of the map
$$\bigoplus_{\mathclap{[y] \in \pi_0(A_0(0;b',\ell_{b'},+))}} H_{X(g)-1}(\alpha_{{L}_{x,y}}(g-2,+;W)) \lra H_{X(g)-1}(\beta_{{L}_x}(g-1,+;W)).$$
But as we have assumed that $Y_{g-1}$ holds, and $X(g)-1 \leq Y(g-1)$ by Definition \ref{defn:Orientable:Range} (\ref{it:Orientable:XY}), this map is surjective, and hence \eqref{eq:NeedsToVanish} is trivial as required.
\end{proof}

If we omit Step 3 in the proof above, and consult Figure \ref{fig:sseqNew1}, we see that we have proved the vanishing of $H_*(\alpha_L(g,+;W))$ in degrees $* \leq X(g)-1$. This is not sufficient for an induction to proceed. Our introduction of the statements ($X_y$) and ($Y_y$), of the functions $X$ and $Y$, and especially of the notion of $k$-triviality, is to allow the following argument instead.

\begin{proof}[Proof of Theorem \ref{thm:InductiveStepOrientable} (\ref{it:thm:InductiveStepOrientable:3}) and (\ref{it:thm:InductiveStepOrientable:4})]
Suppose that $k=2l$ and let us prove statement (\ref{it:thm:InductiveStepOrientable:3}): that $X_g$, $Y_g$, and $G_{g-l}$ imply $F_g$; the remaining cases are completely analogous. 

We are considering an elementary stabilisation map ${W} : {Q}_{\sticks} \leadsto {Q}'_{\cross}$ which induces a stabilisation map 
$$W_* : \mathcal{M}^\theta_{{L}}(g,+;0,{Q}) \lra \mathcal{M}^\theta_{{L}}(g+1,+;1,{Q})$$
of type $\alpha$, and we may suppose that $L$ is standard on $\bB$ (by changing it by an isomorphism of outer boundary conditions). Choose embeddings $b_i : \{ \pm1\} \times \bR \hookrightarrow L$ for $1 \leq i \leq k$ disjoint from $\bB$ arranged as in Figure \ref{fig:kTRivOr} b) on page \pageref{fig:kTRivOr}. Choose a sequence of outer cobordisms \emph{without $\theta$-structure}
$$L_{2l} \overset{R_{2l}}\leadsto L_{2l-1} \leadsto \cdots \leadsto L_2 \overset{R_2}\leadsto L_1 \overset{R_1}\leadsto L_0 := L$$
which are standard on $\bB$, so that $R_j$ contains $[-1,0] \times b_i(\{ \pm1\} \times \bR)$ for all $i > j$, and is obtained from $L_{j-1}$ by attaching a 1-handle along
$$b_j\vert_{\{ \pm 1\} \times \bI} : \{ \pm 1\} \times \bI \lra L_{j-1}.$$

If we resolve the map $W^*$ using the embedding $b_1$, then the corresponding map \eqref{eq:AugmentationAlpha} is
$$\bigoplus_{\mathclap{[x_1] \in \pi_0(A_0(0;b_1,\ell_{b_1},+))}} H_*(\beta_{{L}_{x_1}}(g,+;W)) \lra H_*(\alpha_{{L}}(g,+;W)).$$
Moreover, we may take the outer cobordism $R_{x_1} : L_{x_1} \leadsto L$ to have underlying manifold $R_1$, as the only requirement on the underlying manifold is that it should be obtained by attaching a handle along $b_1\vert_{\{\pm\} \times \bI}$, which $R_1$ is. As $R_1$ contains $[-1,0] \times b_i(\{\pm1\} \times \bR)$ for all $i \geq 2$, the maps $b_i$ for $i \geq 2$ have image inside each $L_{x_1}$, so we may use $b_2$ to resolve each $\beta_{{L}_{x_1}}(g,+;W)$. Continuing in this way, we obtain a long composition
\begin{align*}
\hspace*{1.2cm}
\bigoplus_{\mathclap{\substack{[x_1] \in \pi_0(A_0(0;b_1,\ell_{b_1},+)) \\ \vdots \\ [x_{2l}] \in \pi_0(A_0(0;b_{2l},\ell_{b_{2l}},+))}}}  H_*(\alpha_{{L}_{x_1, \ldots,x_{2l}}}(g-l,+;W)) \lra \cdots \lra \bigoplus_{\mathclap{\substack{[x_1] \in \pi_0(A_0(0;b_1,\ell_{b_1},+)) \\ [x_2] \in \pi_0(A_0(0;b_2,\ell_{b_2},+))}}}  H_*(\alpha_{{L}_{x_1, x_2}}(g-1,+;W)) \\
\lra \bigoplus_{\mathclap{[x_1] \in \pi_0(A_0(0;b_1,\ell_{b_1},+))}} H_*(\beta_{{L}_{x_1}}(g,+;W)) \lra H_*(\alpha_{{L}}(g,+;W)).
\end{align*}
Each map in this composition is a direct sum of maps of type \eqref{eq:AugmentationAlpha} or \eqref{eq:AugmentationBeta}: the leftmost is of type \eqref{eq:AugmentationBeta} and as we have assumed $Y_g$ holds is surjective in degrees $* \leq Y(g-l+1)$; the next is of type \eqref{eq:AugmentationAlpha} and as we have assumed $X_g$ holds is surjective in degrees $* \leq X(g-l+1)$; as we move to the right, the genus increases and the maps become surjective in an increasing range of degrees. Thus the composition is surjective in degrees $* \leq \min(Y(g-l+1),X(g-l+1))$, and so by Definition \ref{defn:Orientable:Range} (\ref{it:Orientable:kIsEven}) it is surjective in degrees $* \leq F(g)$.

On the other hand, on each summand of the source the map
\begin{equation}\label{eq:MapToVanish}
H_*(\alpha_{{L}_{x_1, \ldots,x_{2l}}}(g-l,+;W)) \lra H_*(\alpha_{{L}}(g,+;W))
\end{equation}
is induced by gluing on the cobordism $U := R_1 \circ R_2 \circ \cdots \circ R_{2l}$ equipped with some $\theta$-structure $\ell_U$ (which depends on the $x_i$). By the pattern in which we chose the intervals $b_i$ (which was that shown in Figure \ref{fig:kTRivOr} b)), the pair $({W}, U)$ is an orientable test pair of height $2l$. Thus, by Corollary \ref{cor:kTrivMeansZero} (\ref{it:cor:kTrivMeansZero:1}) the map \eqref{eq:MapToVanish} is zero in those degrees $*$ in which all stabilisation maps of type $\beta$
$$Y_*: H_{*-1}(\mathcal{M}^\theta_{{L}'}(g-l,+;0,Q'')) \lra H_{*-1}(\mathcal{M}^\theta_{{L}'}(g-l,+;1,Q))$$
are surjective. As we have assumed that $G_{g-l}$ holds, all such maps are surjective for $*-1 \leq G(g-l)$, and so by Definition \ref{defn:Orientable:Range} (\ref{it:Orientable:kIsEven}) for $*-1 \leq F(g)-1$. But then the long composition is both zero and surjective in degrees $* \leq F(g)$, and so $H_*(\alpha_{{L}}(g,+;W))=0$ in this range, as required.
\end{proof}

\section{Proof of Theorem \ref{thm:StabNonorientableSurfaces}}\label{sec:PfNonorientable}

To prove Theorem \ref{thm:StabNonorientableSurfaces} we take a somewhat different approach to the last section. Firstly we shall prove Theorem \ref{thm:StabNonorientableSurfaces} (\ref{it:NOStabMu}), that is, homological stability for forming the boundary connect-sum with  projective planes. The proof of this is analogous to the proof given in the case of orientable surfaces, but enjoys two (related) simplifications: firstly, as described in Proposition \ref{prop:FibResOfPairs} (\ref{it:FibResOfPairs:ResMu}), when we resolve an elementary stabilisation map of type $\mu$ with the projective plane resolution, the map at the level of $p$-simplices can again be expressed in terms of elementary stabilisation maps of type $\mu$; secondly, the analogue of Step 3 in the proof of Theorem \ref{thm:InductiveStepOrientable} (\ref{it:thm:InductiveStepOrientable:1}) and (\ref{it:thm:InductiveStepOrientable:2}) does not arise.

Once we have proved Theorem \ref{thm:StabNonorientableSurfaces} (\ref{it:NOStabMu}), we employ an idea of Wahl: in order to prove Theorem \ref{thm:StabNonorientableSurfaces} (\ref{it:NOStabAlpha}) and (\ref{it:NOStabBeta}) we may as well stabilise by gluing on countably-many projective planes, which then only requires us to show homological stability in all degrees for maps of type $\alpha$ and $\beta$ defined for ``infinite genus surfaces". We prove this by induction on homological degree (rather than genus), using similar techniques as in the previous section.

We first introduce some useful notation, and record a corollary of Proposition \ref{prop:AbsorptionProp}.

\begin{defn}
Let ${W} : (0,{Q}_{\sticks}) \leadsto (1,{Q}'_{\cross})$ be an elementary stabilisation map, and ${L}$ be an outer boundary condition. If $W \cup ([0,1] \times L) : {L}_{{Q}} \leadsto {L}_{{Q}'}$ is a stabilisation map of type $\mu$ then we write $\mu_{{L}}(g,-;W)$ for the pair of spaces given by the map
$$W_* : \mathcal{M}^\theta_{{L}}(g,-;0,Q) \lra \mathcal{M}^\theta_{{L}}(g+1,-;1,Q').$$

We similarly write $\alpha_{{L}}(g,-;W)$ or $\beta_{{L}}(g,-;W)$ if $W \cup ([0,1] \times L) : {L}_{{Q}} \leadsto {L}_{{Q}'}$ is a stabilisation map of type $\alpha$ or $\beta$.
\end{defn}

The following corollary is deduced from Proposition \ref{prop:AbsorptionProp}, using the fact that if $({W}, U)$ be a non-orientable test pair of height $k'$ and $\theta$ is $k'$-trivial for projective planes, then $U$ absorbs $W$.

\begin{cor}\label{cor:kTrivMeansZero2}
Let $({W}, U)$ be a non-orientable test pair of height $k'$. If the stabilisation map $W \cup ([0,1] \times L) : {L}_{{Q}} \leadsto {L}_{{Q}'}$ is of type $\mu$, then the induced map on relative homology is of the form
\begin{align*}
(U^*)_* : H_*(\mu_{{L}'}(g-k',-;W)) \lra H_*(\mu_{{L}}(g,-;W))
\end{align*}
and is zero in those homological degrees $*$ where all stabilisation maps
$$Y_*: H_{*-1}(\mathcal{M}^\theta_{{L}'}(g-k'-1,-;0,Q'')) \lra H_{*-1}(\mathcal{M}^\theta_{{L}'}(g-k',-;1,Q))$$
of type $\mu$ are surjective.
\end{cor}

\subsection{Proof of Theorem \ref{thm:StabNonorientableSurfaces} (\ref{it:NOStabMu})}

Let $H', Z' : \bZ \to \bZ$ be the functions given in Definition \ref{defn:Nonorientable:Range}, and consider the statement
\begin{equation}\tag{H$'_y$}
\text{For $g \leq y$, and all $W$ and $L$, } H_*(\mu_{L}(g,-;W)) = 0 \text{ in degrees } * \leq H'(g).
\end{equation}

This statement implies part (\ref{it:NOStabMu}) of Theorem \ref{thm:StabNonorientableSurfaces} (using Lemma \ref{lem:EltStab}). As in the last section, in order to prove this statement we must simultaneously prove another more technical statement. It has the same form as the more technical sattements of the previous section, but we give its definition in full. By Proposition \ref{prop:FibResOfPairs} (\ref{it:FibResOfPairs:ResMu}), when an elementary stabilisation map $W_* : \mathcal{M}^\theta_{{L}}(g,-;0,Q) \to \mathcal{M}^\theta_{{L}}(g+1,-;1,Q')$ of type $\mu$ is resolved (using resolution data $b : \{\pm1\} \times \bR \hookrightarrow L$), it gives a map
$$\mathcal{P}^\theta_{{L}}(g,-; 0,{Q};b)_0 \lra \mathcal{P}^\theta_{{L}}(g+1,-; 1,{Q}';b)_0$$
on $0$-simplices, which is a map over $A_0(0;b,\ell_b,-) \overset{\sim}\hookrightarrow A_0(1;b,\ell_b,-)$. On fibres over $x \in A_0(0;b,\ell_b,-)$, the map is homotopy equivalent to the elementary stabilisation map of type $\mu$
$$W_*: \mathcal{M}^\theta_{{L}_x}(g-1,-;0,{Q}) \lra \mathcal{M}^\theta_{{L}_x}(g,-;1,{Q}'),$$
where the new outer boundary condition ${L}_x$ depends on the point $x \in A_0(0;b,\ell_b)$ we are working over. Thus we obtain a map of pairs
$$\epsilon_x : \mu_{{L}_x}(g-1,-;W) \lra \mu_{{L}}(g,-;W).$$
By choosing one point $x$ in each path component of $A_0(0;b,\ell_b,-)$, and summing together the maps $\epsilon_x$ on homology, we obtain a map
\begin{equation}\label{eq:AugmentationMu}
\epsilon_* : \bigoplus_{\mathclap{[x] \in \pi_0(A_0(0;b,\ell_b,-))}} H_*(\mu_{{L}_x}(g-1,-;W)) \lra H_*(\mu_{{L}}(g,-;W)).
\end{equation}
The additional statement that we will simultaneously prove is
\begin{equation}\tag{Z$'_y$}
\text{For all $g \leq y$ and all $W$, $L$, and $b$,  \eqref{eq:AugmentationMu} is epi in degrees} \,\, * \leq Z'(g).
\end{equation}

By assumption, $\theta$ stabilises on $\pi_0$ for projective planes at genus $h'$, so we have $H_0(\mu_{{L}}(g,-;W))=0$ for all $g \geq h'-1$ and all $W$ and $L$, so certainly the statements $H'_{h'-1}$ and $Z'_{h'-1}$ hold, as the functions $H'$ and $Z'$ take the value 0 at $h'-1$ and $-1$ below $h'-1$. This starts our induction, and the inductive step is provided by the following technical theorem.

\begin{thm}\label{thm:InductiveStepMu}
Suppose the hypotheses of Theorem \ref{thm:StabNonorientableSurfaces} (\ref{it:NOStabMu}) hold, then there are implications
\begin{enumerate}[(i)]
	\item\label{it:thm:InductiveStepMu:1} $H'_{g-1}$ implies $Z'_g$,
	\item\label{it:thm:InductiveStepMu:2} $Z'_{g}$ and $H'_{g-k'-1}$ imply $H'_g$.
\end{enumerate}
\end{thm}
\begin{proof}[Proof of Theorem \ref{thm:InductiveStepMu} (\ref{it:thm:InductiveStepMu:1})]
We consider the semi-simplicial resolution of an elementary stabilisation map of type $\mu$,
\begin{equation*}
\xymatrix{
\mathcal{P}^\theta_{{L}}(g,-; 0,{Q};b)_\bullet \ar[r] \ar[d] & \mathcal{P}^\theta_{{L}}(g+1,-; 1,{Q}';b)_\bullet \ar[d]\\
\mathcal{M}^\theta_{{L}}(g,-;0,Q) \ar[r]^-{W_*}& \mathcal{M}^\theta_{{L}}(g+1,-;1,Q').
}
\end{equation*}

\vspace{2ex}

\noindent \textbf{Step 1}. We may study the map of $p$-simplices of these resolutions as in the proof of Theorem \ref{thm:InductiveStepOrientable} (\ref{it:thm:InductiveStepOrientable:1}) and (\ref{it:thm:InductiveStepOrientable:2}), where there is a relative Serre spectral sequence
\begin{align*}
E^2_{s,t} &= H_s(A_p(0;b, \ell_b,-) ; \underline{H_t}(\mu_{{L}_x}(g-p-1,-;W))) \\
&\Rightarrow H_{s+t}(\mathcal{P}^\theta_{{L}}(g+1,-; 1,{Q}';b)_p, \mathcal{P}^\theta_{{L}}(g,-; 0,{Q};b)_p).
\end{align*}
As we have supposed that $H'_{g-1}$ holds, it follows that $E^2_{s,t}=0$ in degrees $t \leq H'(g-p-1)$, and so
$$H_{t}(\mathcal{P}^\theta_{{L}}(g+1,-; 1,{Q}';b)_p, \mathcal{P}^\theta_{{L}}(g,-; 0,{Q};b)_p)=0 \quad \text{ for } t \leq H'(g-p-1).$$
In addition, for $p=0$ we find that the natural map
$$\bigoplus_{\mathclap{[x] \in \pi_0(A_0(0;b,\ell_b,-))}} H_t(\mu_{{L}_x}(g-1,-;W)) \lra H_t(\mathcal{P}^\theta_{{L}}(g+1,-; 1,{Q}')_0, \mathcal{P}^\theta_{{L}}(g,-; 0,{Q})_0)$$
is surjective for $t \leq H'(g-1)+1$.

\vspace{2ex}

\noindent \textbf{Step 2}. We now study the spectral sequence (\ref{SSRelativeAugmentedRestrictedSimplicialSpace}) for the map of augmented semi-simplicial spaces above, which takes the form
$$E^1_{p,q} = H_q(\mathcal{P}^\theta_{{L}}(g+1,-; 1,{Q}')_p, \mathcal{P}^\theta_{{L}}(g,-; 0,{Q})_p) \quad p \geq -1,\, q \geq 0.$$
It follows from Theorem \ref{thm:ResConnectivity1} that after geometric realisation the homotopy fibres of the two vertical maps are $(\lfloor \tfrac{g-1}{3} \rfloor-1)$- and $(\lfloor \tfrac{g}{3} \rfloor-1)$-connected respectively, and so this spectral sequence converges to zero in degrees $p+q \leq \lfloor \tfrac{g}{3} \rfloor-1$. We wish to draw a conclusion about the groups $E^1_{-1,q}$ for $q \leq Z'(g)$, but $Z'(g) \leq \lfloor \tfrac{g}{3} \rfloor$ by Definition \ref{defn:Nonorientable:Range}. Thus the groups we wish to study are in the range where the spectral sequence converges to zero.

By the first part of Step 1, we find that $E^1_{p,q}=0$ for $p \geq 0$ and $q \leq H'(g-p-1)$. By Definition \ref{defn:Nonorientable:Range} there is an inequality $Z'(g) \leq H'(g-2)+1$, and so in particular (as the inequality $Z'(n) \leq Z'(n-1)+1$ also holds) we have $E^1_{p,q}=0$ when $p > 0$ and $p+q \leq Z'(g)$. As $Z'(g)-1 \leq H'(g-2) \leq H'(g-1)$, we also have $E^1_{0,q}=0$ for $q \leq Z'(g)-1$. A chart of the $E^1$-page of the spectral sequence is shown in Figure \ref{fig:sseqNew2}.

\begin{figure}[h]
\centering
\includegraphics[bb = 0 0 158 120]{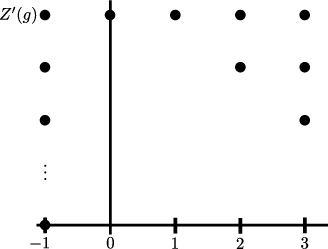}
\caption{}\label{fig:sseqNew2}
\end{figure}

Consequently, the only differentials which can land in $E^r_{-1,t}$ for $t \leq Z'(g)$ are $d^1 : E^1_{0,t} \to E^1_{-1,t}$, and because $E^\infty_{-1,t}=0$ for $t \leq Z'(g)$ it follows that these $d^1$-differentials must be surjective. Combining this with the last part of Step 1 shows that the composition
$$\bigoplus_{\mathclap{[x] \in \pi_0(A_0(0;b,\ell_b,-))}} H_{t}(\mu_{{L}_x}(g-1,-;W)) \lra E^1_{0, t} \overset{d^1}\lra E^1_{-1, t},$$
is surjective for $t \leq Z'(g)$, but this is precisely the map \eqref{eq:AugmentationMu}. This proves $Z'_g$.
\end{proof}

\begin{proof}[Proof of Theorem \ref{thm:InductiveStepMu} (\ref{it:thm:InductiveStepMu:2})]
Consider $W_* : \mathcal{M}^\theta_{{L}}(g,-;0,Q) \to \mathcal{M}^\theta_{{L}}(g+1,-;1,Q')$ an elementary stabilisation map of type $\mu$, and suppose that $L$ is standard on $\bB$. Choose embeddings $b_i : \{\pm1\} \times \bR \hookrightarrow L$ for $1 \leq i \leq k'$ disjoint from $\bB$, so that the data $(L, \{b_1, \ldots, b_k\})$ are arranged as shown in Figure \ref{fig:kTRivNO} on page \pageref{fig:kTRivNO}.

Proceeding precisely as in the proof of Theorem \ref{thm:InductiveStepOrientable} (\ref{it:thm:InductiveStepOrientable:3}) and (\ref{it:thm:InductiveStepOrientable:4}) using this data, we obtain a long composition
\begin{align*}
\bigoplus_{\mathclap{\substack{[x_1] \in \pi_0(A_0(0;b_1,\ell_{b_1},-)) \\ \vdots \\ [x_{k'}] \in \pi_0(A_0(0;b_{k'},\ell_{b_{k'}},-))}}}  H_*(\mu_{L_{x_1, \ldots,x_{k'}}}(g-k',-;W)) \lra \cdots \lra \bigoplus_{\mathclap{\substack{[x_1] \in \pi_0(A_0(0;b_1,\ell_{b_1},-)) \\ [x_2] \in \pi_0(A_0(0;b_2,\ell_{b_2},-))}}}  H_*(\mu_{L_{x_1, x_2}}(g-2,-;W)) \\
\lra \bigoplus_{\mathclap{[x_1] \in \pi_0(A_0(0;b_1,\ell_{b_1},-))}} H_*(\mu_{L_{x_1}}(g-1,-;W)) \lra H_*(\mu_{L}(g,-;W)).
\end{align*}
Each map in this composition is a direct sum of maps of type \eqref{eq:AugmentationMu}, so as we have assumed that $Z'_g$ holds it follows that all the maps are surjective in degrees $* \leq Z'(g-k'+1)$, and so by Definition \ref{defn:Nonorientable:Range} in degrees $* \leq H'(g)$.

As in the proof of Theorem \ref{thm:InductiveStepOrientable} (\ref{it:thm:InductiveStepOrientable:3}) and (\ref{it:thm:InductiveStepOrientable:4}), on each summand of the source this map is induced by gluing on an outer cobordism $U$, and by the pattern in which we chose the intervals $b_i$ (that of Figure \ref{fig:kTRivNO}), the pair $(W, U)$ is a non-orientable test pair of height $k'$. By Corollary \ref{cor:kTrivMeansZero2} it follows that the long composition is zero in those degrees $*$ such that all stabilisation maps of type $\mu$
$$Y_*: H_{*-1}(\mathcal{M}^\theta_{{L}'}(g-k'-1,-;0,Q'')) \lra H_{*-1}(\mathcal{M}^\theta_{{L}'}(g-k',-;1,Q))$$
are surjective. As we have assumed that $H'_{g-k'-1}$ holds, all such maps are surjective in degrees $*$ such that $*-1 \leq H'(g-k'-1)$, so in particular in degrees $* \leq H'(g)$ by Definition \ref{defn:Nonorientable:Range}. But then the long composition is both zero and surjective in degrees $* \leq H'(g)$, and so $H_*(\mu_{{L}}(g,-;W))=0$ in this range, as required.
\end{proof}

\subsection{Proof of Theorem \ref{thm:StabNonorientableSurfaces} (\ref{it:NOStabAlpha}) and (\ref{it:NOStabBeta})}

Consider an elementary stabilisation map of type $\alpha$, $W_* : \mathcal{M}_L^\theta(g,-;0,Q) \to \mathcal{M}_L^\theta(g+2,-;1,Q')$. Recall from the discussion preceeding Theorem \ref{thm:ResConnectivity2} that we may choose a sequence of composable stabilising cobordisms (in the sense of Definition \ref{defn:StabCob})
$$L = L_0 \overset{K_1}\leadsto L_1 \overset{K_1}\leadsto L_2 \leadsto \cdots,$$
where each $K_i$ contains $[-1,0] \times b(\{\pm 1\} \times \bR)$ as a $\theta$-submanifold, and is obtained up to diffeomorphism from $[-1,0] \times L_{i-1}$ by forming the connect-sum with $\bR\bP^2$ in a component which touches the image of the map $b$. As gluing on inner and outer cobordisms commute with each other, there is an induced ladder diagram
\begin{equation*}
\xymatrix{
\mathcal{M}_L^\theta(g,-;0,Q) \ar[r]^-{K_1^*} \ar[d]^-{W_*}& \mathcal{M}_{L_1}^\theta(g+1,-;-1,Q) \ar[r]^-{K_2^*}\ar[d]^-{W_*} & \mathcal{M}_{L_2}^\theta(g+2,-;-2,Q) \cdots\ar[d]^-{W_*}\\
\mathcal{M}_L^\theta(g+2,-;1,Q') \ar[r]^-{K_1^*}& \mathcal{M}_{L_1}^\theta(g+3,-;0,Q') \ar[r]^-{K_2^*} & \mathcal{M}_{L_2}^\theta(g+4,-;-1,Q') \cdots
}
\end{equation*}
and so a map on homotopy colimits, which fits into a commutative square
\begin{equation*}
\xymatrix{
\mathcal{M}_L^\theta(g,-;0,Q) \ar[r] \ar[d]^-{W_*}& \relax \underset{n \to \infty}\hocolim \mathcal{M}_{L_n}^\theta(g+n,-;-n,Q) \ar[d]^-{W_*} \\
\mathcal{M}_L^\theta(g+2,-;1,Q') \ar[r]& \relax \underset{n \to \infty}\hocolim \mathcal{M}_{L_n}^\theta(g+2+n,-;1-n,Q').
}
\end{equation*}
By Theorem \ref{thm:StabNonorientableSurfaces} (\ref{it:NOStabMu}) the horizontal maps are isomorphisms in homology in degrees $* \leq H'(g)-1$, so to show the left-hand vertical map is an isomorphism on homology in this range, it suffices to show that the right-hand vertical map is an isomorphism on homology (in principle only in degrees $* \leq H'(g)-1$, but we shall in fact show that it is an isomorphism in all degrees). The square above may be expressed as a map of pairs, from the left-hand pair to the right-hand one, as
$$\alpha_L(g,-;W) \lra \underset{n \to \infty}\hocolim \alpha_{L_n}(g+n,-;W),$$
so we consider the following statement
\begin{equation}\tag{F$'_y$}
\text{For all $t \leq y$, all $W$, and all sequences $K_i$, } \underset{n \to \infty} \colim H_t(\alpha_{L_n}(g+n,-;W)) = 0.
\end{equation}
Repeating the above discussion discussion for an elementary stabilisation map of type $\beta$, we also consider the statement
\begin{equation}\tag{G$'_y$}
\text{For all $t \leq y$, all $W$, and all sequences $K_i$, } \underset{n \to \infty} \colim H_t(\beta_{L_n}(g+n,-;W)) = 0.
\end{equation}
If the statements $F'_y$ and $G'_y$ hold for all $y$, then we have proved Theorem \ref{thm:StabNonorientableSurfaces} (\ref{it:NOStabAlpha}) and (\ref{it:NOStabBeta}); by Remark \ref{rem:BetaGamma}, part (\ref{it:NOStabGamma}) follows from part (\ref{it:NOStabBeta}), and so we have proved Theorem \ref{thm:StabNonorientableSurfaces}.

As usual, in order to prove the above statements we will consider a pair of auxiliary statements, which concern the maps
\begin{equation}\label{eq:AugmentationAlphaNO}
\epsilon_* : \underset{n \to \infty} \colim \bigoplus_{\mathclap{[x] \in \pi_0(A_0(0;b,\ell_b,+))}} H_*(\beta_{(L_n)_x}(g+n,-;W)) \lra \underset{n \to \infty} \colim H_*(\alpha_{L_n}(g+n,-;W))
\end{equation}
and
\begin{equation}\label{eq:AugmentationBetaNO}
\epsilon_* : \underset{n \to \infty} \colim \bigoplus_{\mathclap{[x] \in \pi_0(A_0(0;b,\ell_b,+))}} H_*(\alpha_{(L_n)_x}(g+n-2,-;W)) \lra \underset{n \to \infty} \colim H_*(\beta_{{L}_n}(g+n,-;W))
\end{equation}
obtained by taking the non-orientable versions of the maps \eqref{eq:AugmentationAlpha} and \eqref{eq:AugmentationBeta} and forming the limit over gluing on the outer cobordisms $K_i$. More precisely, for each $x \in A_0(0;b,\ell_b,+)$ there is a commutative diagram of solid maps
\begin{equation*}
\xymatrix{
\mathcal{M}_{(L_n)_x}^\theta(g+n,-;1-n,Q) \ar[d]^-\simeq \ar@{-->}[r]& \mathcal{M}_{(L_{n+1})_x}^\theta(g+n+1,-;-n,Q) \ar[d]^-\simeq\\
\pi_0^{-1}(x) \ar[d]  \ar[r]& \pi_0^{-1}(x) \ar[d]\\
\mathcal{B}_{L_n}^\theta(g+n,-;-n,Q;b)_0 \ar[d]^-\epsilon \ar[r]^-{(K_{n+1})^*}& \mathcal{B}_{L_{n+1}}^\theta(g+n+1,-;-n-1,Q;b)_0 \ar[d]^-\epsilon\\
\mathcal{M}_{L_n}^\theta(g+n,-;-n,Q) \ar[r]^-{(K_{n+1})^*}& \mathcal{M}_{L_{n+1}}^\theta(g+n+1,-;-n-1,Q)
}
\end{equation*}
and the colimit is formed from the relative version of this diagram, using the dashed map obtained by inverting the top right weak homotopy equivalence. The auxiliary statements are then as follows.
\begin{equation}\tag{X$'_y$}
\text{For all $* \leq y$, all $W$, and all sequences $K_i$, the maps \eqref{eq:AugmentationAlphaNO} are surjective.}
\end{equation}
\begin{equation}\tag{Y$'_y$}
\text{For all $* \leq y$, all $W$, and all sequences $K_i$, the maps \eqref{eq:AugmentationBetaNO} are surjective.}
\end{equation}

That the statements $F'_0$, $G'_0$, $X'_0$ and $Y'_0$ hold is immediate from stability of $\pi_0$ for non-orientable surfaces at genus $h$, as this implies that $H_0(\alpha_{L_n}(g+n,-;W))$ and $H_0(\beta_{L_n}(g+n,-;W))$ are both zero whenever $g+n \geq h$. 

\begin{prop}\label{prop:InductiveStepNonorientable}
Suppose the hypotheses of Theorem \ref{thm:StabNonorientableSurfaces} hold, then there are implications
\begin{enumerate}[(i)]
\item\label{it:prop:InductiveStepNonorientable:1} $G'_{y-1}$ implies $X'_{y}$,

\item\label{it:prop:InductiveStepNonorientable:2} $F'_{y-1}$ implies $Y'_{y}$,

\item\label{it:prop:InductiveStepNonorientable:3} $F'_{y-1}$ and $X'_y$ imply $F'_y$,

\item\label{it:prop:InductiveStepNonorientable:4} $G'_{y-1}$ and $Y'_y$ imply $G'_y$.
\end{enumerate}
\end{prop}

\begin{proof}[Proof of Proposition \ref{prop:InductiveStepNonorientable} (\ref{it:prop:InductiveStepNonorientable:1}) and (\ref{it:prop:InductiveStepNonorientable:2})]
Both statements will be proved identically, so let us consider the first for concreteness. Suppose we are given the data $W$, $b$, and $K_i$ necessary to express a map of the type \eqref{eq:AugmentationAlphaNO}. There is then an associated map of semi-simplicial resolutions
\begin{equation*}
\xymatrix{
\relax \underset{n \to \infty}\hocolim \mathcal{B}_{L_n}^\theta(g+n,-;-n,Q;b)_\bullet \ar[r] \ar[d] & \relax \underset{n \to \infty}\hocolim \mathcal{H}_{L_n}^\theta(g+n+2,-;1-n,Q';b)_\bullet \ar[d]\\
\relax \underset{n \to \infty}\hocolim \mathcal{M}_{L_n}^\theta(g+n,-;-n,Q) \ar[r] & \relax \underset{n \to \infty}\hocolim \mathcal{M}_{L_n}^\theta(g+n+2,-;1-n,Q').
}
\end{equation*}

\vspace{2ex}

\noindent \textbf{Step 1}. As in Step 1 of the proof of Theorem \ref{thm:InductiveStepOrientable}, we can understand the map of $p$-simplices of these resolutions using the map of Serre fibrations
\begin{equation*}
\xymatrix{
\mathcal{B}_{L_n}^\theta(g+n,-;-n,Q;b)_p \ar[d] \ar[r]& \mathcal{H}_{L_n}^\theta(g+n+2,-;1-n,Q';b)_p \ar[d]\\
A_p(-n;b,\ell_b,+) \ar[r]^-\simeq& A_p(1-n;b,\ell_b,+),
}
\end{equation*}
which on fibres is modelled by the maps
$$W_* : \mathcal{M}_{(L_n)_x}^\theta(g+n-2p,-;-n,Q) \lra \mathcal{M}_{(L_n)_x}^\theta(g+n+2-2(p+1),-;1-n,Q')$$
of type $\beta$. There is thus a relative Serre spectral sequence
\begin{align*}
E^2_{s,t}(n) &= H_s(A_p(-n;b, \ell_b,+) ; \underline{H_t}(\beta_{(L_n)_x}(g+n-2p,-;W))) \\
&\Rightarrow H_{s+t}(\mathcal{H}_{L_n}^\theta(g+n+2,-;1-n,Q';b)_p, \mathcal{B}_{L_n}^\theta(g+n,-;-n,Q;b)_p),
\end{align*}
and taking the colimit of these as $n \to \infty$ gives a spectral sequence
$$\underset{n \to \infty}\colim E^2_{s,t}(n) \Rightarrow \underset{n \to \infty}\colim H_{s+t}(\mathcal{H}_{L_n}^\theta(g+n+2,-;1-n,Q';b)_p, \mathcal{B}_{L_n}^\theta(g+n,-;-n,Q;b)_p).$$
As we have assumed that $G'_{y-1}$ holds, so $\underset{n \to \infty}\colim {H}_t(\beta_{(L_n)_x}(g+n-2p,-;W))=0$ for $t \leq y-1$, it follows that $\underset{n \to \infty}\colim E^2_{s,t}(n)=0$ for $t \leq y-1$, and so
\begin{equation*}
\underset{n \to \infty}\colim H_{*}(\mathcal{H}_{L_n}^\theta(g+n+2,-;1-n,Q';b)_p, \mathcal{B}_{L_n}^\theta(g+n,-;-n,Q;b)_p)=0 \quad \text{ for } * \leq y-1.
\end{equation*}
In the case $p=0$, we also deduce that the natural map
\begin{align*}
\underset{n \to \infty}\colim \bigoplus_{\mathclap{[x] \in \pi_0(A_0(0;b,\ell_b,+))}} & H_t(\beta_{(L_n)_x}(g+n,-;W))\\
& \lra \underset{n \to \infty}\colim H_{t}(\mathcal{H}_{L_n}^\theta(g+n+2,-;1-n,Q';b)_0, \mathcal{B}_{L_n}^\theta(g+n,-;-n,Q;b)_0)
\end{align*}
is surjective for $t \leq y$.

\vspace{2ex}

\noindent \textbf{Step 2}. Associated to the map of semi-simplicial resolutions above, we have a spectral sequence of type (\ref{SSRelativeAugmentedRestrictedSimplicialSpace}) which takes the form
$$E^1_{p,q} = \underset{n \to \infty}\colim H_{q}(\mathcal{H}_{L_n}^\theta(g+n+2,-;1-n,Q';b)_p, \mathcal{B}_{L_n}^\theta(g+n,-;-n,Q;b)_p)$$
for $p \geq -1$ and $q \geq 0$. By Theorem \ref{thm:ResConnectivity2} the two vertical maps have contractible homotopy fibres after geometric realisation, and so this spectral sequence converges to zero in all degrees, that is, $E^\infty_{p,q}=0$ for all $p$ and $q$. By the first part of Step 1, $E^1_{p,q}=0$ for $p \geq 0 $ and $q \leq y-1$, so the $E^1$-page of the spectral sequence is as shown in Figure \ref{fig:sseqNew3}.

\begin{figure}[h]
\centering
\includegraphics[bb = 0 0 149 120]{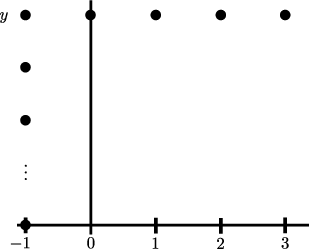}
\caption{}\label{fig:sseqNew3}
\end{figure}

From this we deduce that the differential $d^1 : E^1_{0,t} \to E^1_{-1,t}$ is surjective for $t \leq y$, and combining this with the second part of Step 1 shows that
$$\underset{n \to \infty}\colim \bigoplus_{\mathclap{[x] \in \pi_0(A_0(0;b,\ell_b,+))}}  H_t(\beta_{(L_n)_x}(g+n,-;W)) \lra \underset{n \to \infty}\colim H_t(\alpha_{L_n}(g+n,-;W))$$
is surjective for $t \leq y$, as required.
\end{proof}

For the second half of Proposition \ref{prop:InductiveStepNonorientable} we require a result analogous to Corollary \ref{cor:kTrivMeansZero} but in the non-orientable and stable setting.

\begin{lem}\label{lem:AugIsZero}
Suppose that $\theta$ stabilises on $\pi_0$ for non-orientable surfaces at genus $h$. Then
\begin{enumerate}[(i)]
\item if $F'_{y-1}$ holds, then the map \eqref{eq:AugmentationAlphaNO} is zero in degrees $* \leq y$,

\item if $G'_{y-1}$ holds, then the map \eqref{eq:AugmentationBetaNO} is zero in degrees $* \leq y$.
\end{enumerate}
\end{lem}
\begin{proof}
Let us consider the first case. As $F'_{y-1}$ is assumed to hold, and we have proved homological stability for stabilisation by projective planes, there is a $g' \geq 0$ such that every elementary stabilisation map of type $\alpha$,
$$Y_* : H_{t}(\mathcal{M}^\theta_{L_x}(g-2,-;0,Q'')) \lra H_{t}(\mathcal{M}^\theta_{L_x}(g,-;1,Q))$$
is surjective in degrees $t \leq y-1$ for every $g \geq g'$.

To show the map \eqref{eq:AugmentationAlphaNO} is zero in degrees $* \leq y$, it is enough to show that the composition
$$H_*(\beta_{L_x}(g,-;W)) \lra H_*(\alpha_{L}(g,-;W))\lra  H_*(\alpha_{L_{h+1}}(g+h+1,-;W))$$
is zero in degrees $* \leq y$ for every $g \geq g'$. Here the first map is induced by the outer cobordism $R_x : L_x \leadsto L$ associated to an element $[x] \in \pi_0(A_0(0;b,\ell_b,+))$ and the second map is induced by the composition of the outer cobordisms $K_i$ for $1 \leq i \leq h+1$. Let us write $U : L_x \leadsto L_{h+1}$ for the composition of these cobordisms.

We claim that $U$ absorbs $W$. We shall show this using the technique introduced in Section \ref{sec:FormalKTriviality}, which was used in the proof of Propositions \ref{prop:FormalkTriv} and \ref{prop:FormalkTrivNO} to deduce $k$-triviality from stabilisation on $\pi_0$. The relevant diagram in this case is 
\begin{equation*}
\xymatrix{
\mathcal{M}^\theta(h-1,-;\bigcirc \bigcirc) \ar[d]^-{\text{type } \alpha} \ar[r]^-{\text{type } \alpha}& \mathcal{M}^\theta(h+1,-;\bigcirc) \ar[d]\ar[d]^-{\text{type } \beta}\\
\mathcal{M}^\theta(h+1,-; \bigcirc) \ar[r]^-{\text{type } \beta}& \mathcal{M}^\theta(h+1,-;\bigcirc \bigcirc),
}
\end{equation*}
and we require the top map so be surjective on $\pi_0$ and the bottom map to be injective on $\pi_0$, but this implied by stabilisation on $\pi_0$ at genus $h$. Hence $U$ absorbs $W$.

We now apply Proposition \ref{prop:AbsorptionProp} to the pair $(W, U)$. Restricting to certain path components, it follows from that proposition that the map
$$(U^*)_* : H_*(\beta_{L_x}(g,-;W)) \lra H_*(\alpha_{L_{h+1}}(g+h+1,-;W))$$
is zero in those degrees $*$ where all elementary stabilisation maps of type $\alpha$
$$Y_* : H_{*-1}(\mathcal{M}^\theta_{L_x}(g-2,-;0,Q'')) \lra H_{*-1}(\mathcal{M}^\theta_{L_x}(g,-;1,Q))$$
are surjective. By our assumption that $g \geq g'$, any such map is surjective in degrees $*-1 \leq y-1$, i.e.\ for $* \leq y$. It follows that
$$(U^*)_* : H_*(\beta_{L_x}(g,-;W)) \lra H_*(\alpha_{L_{h+1}}(g+h+1,-;W))$$
is zero in degrees $* \leq y$, as required.
\end{proof}

\begin{proof}[Proof of Proposition \ref{prop:InductiveStepNonorientable} (\ref{it:prop:InductiveStepNonorientable:3}) and (\ref{it:prop:InductiveStepNonorientable:4})]
Both statements are proved in the same way, so for concreteness we prove (\ref{it:prop:InductiveStepNonorientable:3}), that $F'_{y-1}$ and $X'_y$ imply $F'_y$. By Lemma \ref{lem:AugIsZero}, as $F'_{y-1}$ holds, the map \eqref{eq:AugmentationAlphaNO} is zero in degrees $* \leq y$. As $X'_y$ holds, the same map is surjective in degrees $* \leq y$. Hence, the target of \eqref{eq:AugmentationAlphaNO} is zero in degrees $* \leq y$.
\end{proof}

\section{Closing the last boundary}\label{sec:LastBoundary}

In order to prove homology stability of $\mathcal{M}^\theta(F)$ for closed surfaces $F$, we cannot use resolutions constructed in terms of arcs with ends on the boundary of a surface, as we have no boundary. Instead, for any surface $F$ (with or without boundary) we will define a new resolution using discs in the surface, and resolve $\mathcal{M}^\theta(F;\ell_{\partial F})$ by moduli spaces of surfaces of the same genus but strictly more boundary components than $F$. The resolution is quite general, and in fact exists for manifolds of any dimension and having any tangential structure. Thus until Section \ref{sec:StabLastBoundarySurface} we work with manifolds of arbitrary dimension.

\subsection{Orientable and non-orientable manifolds}

There are a few differences between the cases of orientable and non-orientable manifolds, which we will deal with before starting.

Fix a connected $d$-manifold $M$, possibly with boundary, and a map $\theta : B \to BO(d)$, and let $\mathcal{M}^\theta(M ; \ell_{\partial M})$ be the moduli space of $\theta$-manifolds with underlying manifold diffeomorphic to $M$ and boundary condition $\ell_{\partial M}$, defined just as in Definition \ref{defn:BorelConstModel}. \emph{If $M$ is orientable we will choose an orientation $\omega_M$, and we will also assume that $\theta^*\gamma_d$ is orientable and choose once and for all an orientation $\omega_\theta$}.

\begin{enumerate}[(i)]

\item\label{it:Orientability:1} If $M$ is orientable and admits an orientation-reversing diffeomorphism (so $\partial M = \emptyset$), let $\Diff^+_\partial(M)$ be the index 2 subgroup of diffeomorphisms preserving $\omega_M$, and $\Bun^+_\partial(TM, \theta^*\gamma_d) \subset \Bun_\partial(TM, \theta^*\gamma_d)$ be the subspace of those bundle maps which on each fibre sends the orientation $\omega_M$ to $\omega_\theta$.

\item\label{it:Orientability:2} If $M$ is orientable and does not admit an orientation-reversing diffeomorphism (e.g.\ if $\partial M \neq \emptyset$), then we write $\Bun^+_\partial(TM, \theta^*\gamma_d;\ell_{\partial M})$ and $\Diff^+_\partial(M)$ for the entire space of bundle maps and the entire group of diffeomorphisms.

\item\label{it:Orientability:3} If $M$ is non-orientable, then to unify notation we write $\Bun^+_\partial(TM, \theta^*\gamma_d;\ell_{\partial M})$ and $\Diff^+_\partial(M)$ for the entire space of bundle maps and the entire group of diffeomorphisms.
\end{enumerate}

\begin{lem}
The map
$$\Bun^+_\partial(TM, \theta^*\gamma_d;\ell_{\partial M}) \hcoker \Diff^+_\partial(M) \lra \Bun_\partial(TM, \theta^*\gamma_d;\ell_{\partial M}) \hcoker \Diff_\partial(M)$$
is a homotopy equivalence.
\end{lem}
\begin{proof}
In cases (\ref{it:Orientability:2}) and (\ref{it:Orientability:3}) above there is nothing to show. In case (\ref{it:Orientability:1}), precomposing with the differential of the orientation-reversing diffeomorphism gives a map
$$\Bun^+_\partial(TM, \theta^*\gamma_d) \lra \Bun_\partial(TM, \theta^*\gamma_d)$$
which is a homeomorphism onto the complement of $\Bun^+_\partial(TM, \theta^*\gamma_d)$. Thus we may identify
$$\Bun_\partial(TM, \theta^*\gamma_d)=\Bun^+_\partial(TM, \theta^*\gamma_d) \times \{\text{orientations of } M\}$$
as a $\Diff(M)$-space, and the claim follows.
\end{proof}

By this lemma, we may as well define $\mathcal{M}^\theta(M;\ell_{\partial M})$ using $\Bun^+_\partial(TM, \theta^*\gamma_d;\ell_{\partial M})$ and $\Diff^+_\partial(M)$, and from now on we shall do so.

\subsection{The disc resolution}

Let us write $[p]$ for the standard ordered set $\{0 < 1 < \cdots < p\}$, which is an object of the simplicial category $\Delta$. Let
$$D(M)_p \subset \Emb\left([p] \times D^d, \mathring{M} \right)$$
be the subspace of those embeddings into the interior of $M$ which, if $M$ is orientable (and hence has a given orientation $\omega_M$ by our conventions), restrict to orientation-preserving embeddings of each $\{i\} \times D^d$. (If $M$ is non-orientable then $D(M)_p$ is the whole space of embeddings.) Define
$$\mathcal{D}^\theta(M;\ell_{\partial M})_p := (D(M)_p \times \Bun^+_\partial(TM, \theta^*\gamma_d;\ell_{\partial M})) \hcoker \Diff^+_\partial(M)$$
where the group acts diagonally. The map $d_j : D(M)_p \to D(M)_{p-1}$ induced by the unique strictly monotonic map $[p-1] \to [p]$ which misses $j$ induces a map $d_j : \mathcal{D}^\theta(M;\ell_{\partial M})_i \to \mathcal{D}^\theta(M;\ell_{\partial M})_{i-1}$, and there are maps $\mathcal{D}^\theta(M;\ell_{\partial M})_i \to \mathcal{M}^\theta(M;\ell_{\partial M})$ that forget all the discs.

\begin{prop}\label{prop:DiscResolution}
This data makes $\mathcal{D}^\theta(M;\ell_{\partial M})_\bullet \to \mathcal{M}^\theta(M;\ell_{\partial M})$ into an augmented semi-simplicial space, and a resolution.
\end{prop}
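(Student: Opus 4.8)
The plan is as follows. That $\mathcal{D}^\theta(M;\delta)^\bullet \to \mathcal{M}^\theta(M;\delta)$ is an augmented semi-simplicial space is immediate: the maps $d_j$ satisfy the semi-simplicial identities by construction, and the augmentation $\epsilon$, which forgets all of the discs, satisfies $\epsilon d_0 = \epsilon d_1$ since both composites forget everything. For the substantive claim that $\|\mathcal{D}^\theta(M;\delta)^\bullet\| \to \mathcal{M}^\theta(M;\delta)$ is a weak homotopy equivalence I would invoke the homotopy fibre lemma of \S\ref{sec:ComputingHomotopyFibres}. Exactly as in the proof of Proposition \ref{prop:AiFiberSeqHandle}, since $\Diff(M)$ acts freely on the weakly contractible space of embeddings used to form the homotopy quotients, the forgetful map $f_i : \mathcal{D}^\theta(M;\delta)^i \to \mathcal{M}^\theta(M;\delta)$ is a fibre bundle whose fibre over a $\theta$-manifold $(M,\ell)$ is $D(M)^i = \Emb(\coprod_{j=0}^i D_j^d, M)$, the space of $(i+1)$-tuples of disjoint embedded discs, with no remaining constraint on the $\theta$-structure. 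Hence the homotopy fibre of $\|\mathcal{D}^\theta(M;\delta)^\bullet\| \to \mathcal{M}^\theta(M;\delta)$ is identified with $\|D(M)^\bullet\|$, the realisation of the semi-simplicial space of tuples of disjoint embedded discs with face maps given by deletion, and the proposition reduces to the assertion that $\|D(M)^\bullet\|$ is weakly contractible.

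To prove this last assertion I would compare $D(M)^\bullet$ with the semi-simplicial space $E_\bullet$ given by $E_p = \Emb(D^d, M)^{p+1}$ and deletion face maps; that is, $D(M)^p$ sits inside $E_p$ as the open locus of pairwise-disjoint tuples. The target $\|E_\bullet\|$ is weakly contractible, since $E_\bullet$ is the underlying semi-simplicial space of the $0$-coskeleton of $\Emb(D^d, M)$, whose realisation is the classifying space of the indiscrete groupoid on $\Emb(D^d, M)$, and this is weakly contractible using only that $\Emb(D^d, M) \neq \emptyset$. So it suffices to show the inclusion $\|D(M)^\bullet\| \hookrightarrow \|E_\bullet\|$ is a weak equivalence. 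Given a map $f$ from a finite complex $K$ (in particular a sphere, and a relative version for nullhomotopies) into $\|E_\bullet\|$, one homotopes $f$ into $\|D(M)^\bullet\|$ by shrinking the discs appearing in $f$ towards their centres and moving overlapping discs apart: by the skeletal filtration only finitely many discs occur over each point of $K$, and by compactness of $K$ these manipulations can be performed uniformly. This gives surjectivity, and the same argument applied to a homotopy gives injectivity, on all homotopy groups; hence $\|D(M)^\bullet\| \simeq \|E_\bullet\| \simeq *$, as required. An essentially equivalent route is to exhibit, up to coherent higher homotopy, an ``extra degeneracy'' on $D(M)^\bullet$ prepending a new small disc disjoint from the given configuration; the content of both routes is the same disjointification lemma.

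The main obstacle is precisely this disjointification step, and two points require care. First, shrinking a tuple of embedded discs does not make it disjoint unless the centres are already distinct; so one cannot argue purely levelwise (indeed the inclusion $D(M)^p \hookrightarrow E_p$ need not be a levelwise equivalence), but must instead work on the level of realisations, subdividing $K$ and using the $\Delta^\bullet$-coordinates to split and separate overlapping discs — a general-position argument organised along the semi-simplicial structure rather than inside $M$. Second, the shrinking and separating must be carried out compatibly with the face-map gluings that define $\|D(M)^\bullet\|$, so that one obtains a genuine homotopy on the realisation and not merely on the disjoint union of simplices; a partition of unity on $K$ subordinate to a finite cover on which the permissible disc radius is bounded below handles this. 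Everything in the argument is insensitive to $\dim M$ and uses no tangential structure, as advertised; in particular the construction and the reduction above apply verbatim to prove the finer connectivity statements that will be needed in \S\ref{sec:StabLastBoundarySurface}.
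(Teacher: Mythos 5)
Your reduction agrees with the paper's: the homotopy fibre of $\|\mathcal{D}^\theta(M;\delta)^\bullet\| \to \mathcal{M}^\theta(M;\delta)$ is $\|D(M)^\bullet\|$, and everything rests on showing this is weakly contractible. The gap is in that contractibility proof. Comparing with the $0$-coskeleton $E_\bullet$ is a reasonable framework and $\|E_\bullet\|$ is indeed contractible, but the assertion that $\|D(M)^\bullet\| \hookrightarrow \|E_\bullet\|$ is a weak equivalence carries essentially all of the content of the proposition, and your ``disjointification'' of a compact family of overlapping disc configurations is only described, not constructed. As you concede, the inclusion is far from a levelwise equivalence, so you must produce, for a map $K \to \|E_\bullet\|$, a homotopy separating coincident or overlapping embeddings that is consistent where simplices of $K$ share faces, while the combinatorial pattern of overlaps jumps discontinuously across $K$; nothing in the sketch controls this. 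The fallback route --- an ``extra degeneracy up to coherent higher homotopy'' prepending a new disjoint disc --- has the same problem in another guise: for closed $M$ there is no continuous choice of such a disc as a function of the configuration, only a weakly contractible space of choices, and organising that into a contraction of the realisation is again the whole difficulty. Your proposal never distinguishes the closed case, which is precisely where the naive contraction fails.

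The paper's proof avoids both issues. It first replaces $D(M)^\bullet$ by the levelwise-equivalent semi-simplicial space $F(M)^\bullet$ of configurations of framed points, using that $\Emb(\mathring{D}^d, M) \to \mathrm{Fr}(M)$ has weakly contractible fibres (a scanning argument); this turns ``disjoint discs'' into ``distinct points'', which is what makes shrinking arguments work. When $\partial M \neq \emptyset$ it restricts to the subspace $F_\epsilon(M)^\bullet$ of configurations avoiding an $\epsilon$-collar of a boundary component --- a levelwise equivalence for small $\epsilon$ --- and then a new framed point can be chosen \emph{canonically} in that collar, giving an honest semi-simplicial contraction. When $M$ is closed it compares $F(\bar M)^\bullet$ with $F(M)^\bullet$ for $\bar M = M \setminus \mathring{D}^d$ and shows the levelwise homotopy cofibre has contractible realisation by the same collar trick. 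If you want to keep your route you would need to prove the disjointification claim as a genuine lemma (induction over skeleta of $K$, shrinking radii and perturbing centres after passing to framed points), at which point you will have reconstructed most of the paper's argument anyway.
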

\begin{proof}
It is clear that the data defines an augmented semi-simplicial space, so we must show that $\vert \mathcal{D}^\theta(M;\ell_{\partial M})_\bullet\vert \to \mathcal{M}^\theta(M;\ell_{\partial M})$ is a homotopy equivalence. By Lemma \ref{lem:AugTriv} the homotopy fibre of this map is  $\vert D(M)_\bullet \vert$, which we must then show is contractible.

The semi-simplicial space $D(M)_\bullet$ is constructed from spaces of embeddings of disjoint discs in $M$, but it is convenient to pass to an equivalent semi-simplicial space whose space of $p$-simplices is the space of $(p+1)$ distinct points of $M$ each equipped with a framing of the tangent space of $M$ at that point (if $M$ is orientable, we insist that the framing is compatible with the chosen orientation $\omega_M$). One may see that this is an equivalent semi-simplicial space by the fibration sequence over the total space of the frame bundle of $M$,
$$* \simeq \mathrm{Fib} \lra \Emb({D}^d, M^d) \overset{\pi}\lra \mathrm{Fr}(M)$$
where $\pi$ is the map sending an embedding $e$ to the image under $De$ of the standard frame at $0$. That this map has contractible fibres follows by a scanning argument, similar to that which shows that the diffeomorphism group of an open disc is homotopy equivalent to the general linear group. Call this semi-simplicial space $F(M)_\bullet$.

Suppose first that $M$ has a non-empty boundary component $\partial_0 M$, choose a Riemannian metric on $M$ and let $F_\epsilon(M)_\bullet \subset F(M)_\bullet$ denote the subspace consisting of configurations which have no point within $\epsilon$ of $\partial_0 M$. If $\epsilon$ is small enough, the inclusion $F_\epsilon(M)_\bullet \hookrightarrow F(M)_\bullet$ is a levelwise homotopy equivalence, and so induces a homotopy equivalence on geometric realisation. However, adding a new framed point $y$ inside the $\epsilon$-neighbourhood of $\partial_0 M$ first to the list of framed points gives a semi-simplicial nullhomotopy $s_{-1} : F_\epsilon(M)_p \to F(M)_{p+1}$ of the inclusion $F_\epsilon(M)_\bullet \hookrightarrow F(M)_{\bullet}$, and hence $\vert F(M)_\bullet\vert$ is contractible.

Now suppose that $M$ is a $d$-manifold without boundary, let $D^d \hookrightarrow M$ be an embedded closed disc and $\bar{M} = M - \mathrm{int}({D}^d)$ be the complement of the interior. Then the inclusion $\bar{M} \to M$ induces an inclusion $F(\bar{M})_\bullet \to F(M)_\bullet$. In simplicial degree $p$ we can identify the homotopy cofibre as
$$\bigvee_{j=0}^i F(\bar{M})_{p-1} \ltimes (\mathrm{Fr}(M)|_{D^d} / \mathrm{Fr}(M)|_{\partial D^d}),$$
where we remind the reader that for a space $X$ and a pointed space $(Y, y_0)$, the half-smash product is the pointed space $X \ltimes Y := (X \times Y)/(X \times \{y_0\})$.

As $\vert F(\bar{M})_\bullet \vert \simeq *$, it will be enough to show that this homotopy cofibre has contractible geometric realisation. For a semi-simplicial space $X_\bullet$, let us define a pointed semi-simplicial space $(X_{\bullet-1} \times [\bullet])_+$ to have space of $p$-simplices given by $(X_{p-1} \times [p])_+$ (the subscript $+$ denotes the addition of a disjoint basepoint, which we call $*$), and face maps given (on points other than $*$) by the formula
\begin{align*}
d_j : (X_{p-1} \times [p])_+ &\lra (X_{p-2} \times [p-1])_+\\
(x, i) & \longmapsto \begin{cases}
(d_j(x), i-1) & j < i\\
* & j=i\\
(d_{j-1}(x), i) & j > i.
\end{cases}
\end{align*}
One may easily check that this defines a semi-simplicial space.

Using this construction, an alternative description of the homotopy cofibre above is the semi-simplicial space
$$(F(\bar{M})_{\bullet-1} \times [\bullet])_+ \wedge (\mathrm{Fr}(M)|_{D^d} / \mathrm{Fr}(M)|_{\partial D^d}),$$
so it will be enough to show that $(F(\bar{M})_{\bullet-1} \times [\bullet])_+$ has contractible geometric realisation. But $\bar{M}$ has non-empty boundary, so as in the previous case the inclusion
$$(F_\epsilon(\bar{M})_{\bullet-1} \times [\bullet])_+ \hookrightarrow (F(\bar{M})_{\bullet-1} \times [\bullet])_+$$
is a levelwise homotopy equivalence if $\epsilon$ is small enough, but it also has a simplicial contraction given (on points other than $*$) by the formula 
$$s_{-1}(x, i) = ((y, x), i+1),$$
where $y$ is a chosen framed point in the $\epsilon$-neighbourhood of $\partial \bar{M}$.
\end{proof}

We now wish to relate the spaces $\mathcal{D}^\theta(M;\delta)_i$ to $\mathcal{M}^\theta(M -\{(i+1) \,\,\text{discs}\})$. There is a $\Diff^+_\partial(M)$-equivariant map
$$\tilde{\pi} : D(M)_p \times \Bun^+_\partial(TM, \theta^*\gamma_d ;\ell_{\partial M}) \lra \Bun(T([p]\times D^d), \theta^*\gamma_d) =: B^\theta_p$$
given by sending $(e : [p] \times D^d \hookrightarrow M, \xi)$ to $e^*(\xi)$, and this descends to a map
$$\pi : \mathcal{D}^\theta(M)_p \lra B^\theta_p.$$

If we fix an $e \in D(M)_p$ and let $M \setminus e := M \setminus e([p] \times \mathring{D}^d)$, then a choice of $\xi \in B^\theta_p$ induces a boundary condition $\ell_{\partial M} \cup \xi$ on $M \setminus e$ and we have a map
$$i: \mathcal{M}^\theta(M\setminus e, \ell_{\partial M} \cup \xi) \lra \mathcal{D}^\theta(M)_p$$
induced by the inclusion $\Diff_\partial(M\setminus e) \subset \Diff_\partial^+(M)$ and the map $\Bun_\partial(T(M\setminus e), \theta^*\gamma_d; \ell_{\partial M} \cup \xi) \to \Bun_\partial(TM, \theta^*\gamma_d ; \ell_{\partial M} )$ defined by gluing in the $\theta$-manifold $([p] \times D^d, \xi)$.

\begin{prop}\label{prop:FibrationDiscResolution}
The maps
$$\mathcal{M}^\theta(M\setminus e, \ell_{\partial M} \cup \xi) \overset{i}\lra \mathcal{D}^\theta(M)_p \overset{\pi}\lra B^\theta_p$$
form a homotopy fibre sequence.
\end{prop}
\begin{proof}
The map $\tilde{\pi}$ is easily seen to be a Serre fibration, as for a fixed $e \in D(M)_p$ the restriction map
\begin{equation*}
\ell  \mapsto e^*(\ell) :\Bun^+_\partial(TM, \theta^*\gamma_d ;\ell_{\partial M}) \lra \Bun(T([p]\times D^d), \theta^*\gamma_d)
\end{equation*}
is a Serre fibration, because $e$ is a cofibration. The map $\pi$ is obtained from the map $\tilde{\pi}$ by forming the Borel construction for the action of $\Diff^+_\partial(M)$ on the source. The fibre of $\tilde{\pi}$ over $\xi$ is the space
$$F := \{(f, \ell) \in D(M)_p \times \Bun_\partial(TM, \theta^*\gamma_d ;\ell_{\partial M}) \,\, \vert \,\, f^*(\ell)=\xi\},$$
and the commutative diagram
\begin{equation*}
\xymatrix{
{F} \ar[r] \ar[d]& {F \hcoker \Diff^+_\partial(M)} \ar[r] \ar[d]& {B\Diff^+_\partial(M)} \ar@{=}[d]\\
{D(M)_p \times \Bun^+_\partial(TM, \theta^*\gamma_d ;\ell_{\partial M})}  \ar[r] \ar[d]^-{\tilde{\pi}}& {\mathcal{D}^\theta(M)_p} \ar[r] \ar[d]^-\pi& {B\Diff^+_\partial(M)} \ar[d]\\
{B_p^\theta} \ar@{=}[r]& {B_p^\theta} \ar[r]& {*}
}
\end{equation*}
has the outer columns and all rows homotopy fibre sequences: thus the middle column is also a homotopy fibre sequence.

The group $\Diff_\partial^+(M)$ acts transitively on $D(M)_p$, and the stabiliser of the fixed embedding $e$ is the subgroup $\Diff_\partial(M \setminus e)$. Thus the map
$$\{\ell \in \Bun_\partial(TM, \theta^*\gamma_d ;\ell_{\partial M}) \,\, \vert\,\, e^*(\ell)=\xi\} \hcoker \Diff_\partial(M\setminus e) \lra F \hcoker \Diff_\partial^+(M)$$
is a homotopy equivalence, but
$$\{\ell \in \Bun_\partial(TM, \theta^*\gamma_d ;\ell_{\partial M}) \,\, \vert\,\, e^*(\ell)=\xi\} \cong \Bun_\partial(T(M\setminus e), \theta^*\gamma_d; \ell_{\partial M} \cup \xi),$$
which proves the claim.
\end{proof}

\subsection{Stability for closing the last boundary}\label{sec:StabLastBoundarySurface}

Let us return now to the case of surfaces. Let
$$W_* : \mathcal{M}^\theta(g, \pm;P) \lra \mathcal{M}^\theta(g, \pm;P')$$
be a stabilisation map of type $\gamma$, so $W$ has a handle structure relative to $P$ consisting of a single 2-handle. We wish to understand the effect of this map on homology. If $P' \neq \emptyset$ then by Remark \ref{rem:BetaGamma} we know a stability range for this stabilisation map, by Theorem \ref{thm:StabOrientableSurfaces} (\ref{it:OStabGamma}) or Theorem \ref{thm:StabNonorientableSurfaces} (\ref{it:NOStabGamma}) in this case. The following theorem gives a stability range when $P' = \emptyset$, and finishes the proofs of Theorems \ref{thm:StabOrientableSurfaces} and \ref{thm:StabNonorientableSurfaces}.

\begin{thm}\label{thm:LastBoundary}
Suppose that $\theta$ satisfies the assumptions of Theorem \ref{thm:StabOrientableSurfaces} and $G : \bZ \to \bZ$ is the function appearing in that theorem. Then any stabilisation map $W_* : \mathcal{M}^\theta(g,+;{P}) \to \mathcal{M}^\theta(g,+;{P}')$ of type $\gamma$ induces an epimorphism in homology in degrees $* \leq G(g)+1$ and an isomorphism in homology in degrees $* \leq G(g)$.

Similarly, suppose that $\theta$ satisfies \emph{all} the assumptions of Theorem \ref{thm:StabNonorientableSurfaces} and  $H' : \bZ \to \bZ$ is the function appearing in that theorem.  Then any stabilisation map $W_* : \mathcal{M}^\theta(g,-;{P}) \to \mathcal{M}^\theta(g,-;{P}')$ of type $\gamma$ induces an epimorphism in homology in degrees $* \leq H'(g)$ and an isomorphism in homology in degrees $* \leq H'(g)-1$.
\end{thm}
\begin{proof}
Let us treat the orientable case: the non-orientable case is identical. Let us also work with the Borel construction model of Section \ref{sec:BorelConst}, where we need only consider the stabilisation maps
$$\mathcal{M}^\theta(\Sigma_{g,1};\ell_{\partial \Sigma_{g,1}}) \lra \mathcal{M}^\theta(\Sigma_{g})$$
defined using our standard model surfaces. Such a stabilisation map  induces a simplicial map
$$\mathcal{D}^\theta(\Sigma_{g,1};\ell_{\partial \Sigma_{g,1}})_\bullet \lra \mathcal{D}^\theta(\Sigma_g)_\bullet$$
on disc resolutions, and we study the associated map of spectral sequences (\ref{SSRelativeRestrictedSimplicialSpace}). These are
$$E^1_{p,q}(\Sigma_{g,1}) = H_q(\mathcal{D}^\theta(\Sigma_{g,1};\ell_{\partial \Sigma_{g,1}})_p) \Longrightarrow H_{p+q}(\mathcal{M}^\theta(\Sigma_{g,1};\ell_{\partial \Sigma_{g,1}}))$$
and 
$$E^1_{p,q}(\Sigma_g) = H_q(\mathcal{D}^\theta(\Sigma_g)_p) \Longrightarrow H_{p+q}(\mathcal{M}^\theta(\Sigma_g)).$$

The induced map on $E^1$-pages of these spectral sequences may be studied via the map of Serre spectral sequences for the homotopy fibre sequences given in Proposition \ref{prop:FibrationDiscResolution},
\begin{diagram}
H_s(B^\theta_p; \underline{H_t}(\mathcal{M}^\theta(\Sigma_{g, p+2};\ell_{\partial \Sigma_{g,1}} \cup \xi))) & \rTo & H_s(B_p^\theta; \underline{H_t}(\mathcal{M}^\theta(\Sigma_{g, p+1};\xi)))\\
\dImplies & & \dImplies\\
H_{s+t}(\mathcal{D}^\theta(\Sigma_{g,1};\ell_{\partial \Sigma_{g,1}})_p) & \rTo & H_{s+t}(\mathcal{D}^\theta(\Sigma_g)_p).
\end{diagram}
By the case which has already been proved of Theorem \ref{thm:StabOrientableSurfaces} (\ref{it:OStabGamma}), the map of local coefficient systems over $B_p^\theta$
$$\underline{H_t}(\mathcal{M}^\theta(\Sigma_{g, p+2};\ell_{\partial \Sigma_{g,1}} \cup \xi)) \lra \underline{H_t}(\mathcal{M}^\theta(\Sigma_{g, p+1};\xi))$$
is an isomorphism for $q \leq G(g)$ and an epimorphism in all degrees. Thus the map on $E^\infty$-pages is an isomorphism in total degrees $* \leq G(g)$ and an epimorphism in total degrees $* \leq G(g)+1$.

This shows that the map $E^1_{s,t}(\Sigma_{g,1}) \to E^1_{s,t}(\Sigma_g)$ is an isomorphism for $t \leq G(g)$ and an epimorphism for $t \leq G(g)+1$. This implies that the map on $E^\infty$-pages is an isomorphism in total degree $* \leq G(g)$ and an epimorphism in total degree $* \leq G(g)+1$ as required.
\end{proof}

\begin{rem}\label{rem:Orient2}
Let us explain the necessity of the assumption that $\theta^*\gamma_2$ be orientable when showing that stabilisation maps which close off the last boundary exhibit homological stability.

First consider the trivial tangential structure $\theta = \mathrm{Id} : BO(2) \to BO(2)$, which does not satisfy this assumption. In this case $\Bun_\partial(TF, \gamma_2 ;\ell_{\partial F}) \simeq *$ for any surface and any boundary condition, and so $\mathcal{M}^\theta(F) \simeq B\Diff_\partial(F)$. Now consider the tangential structure $\theta^+: BSO(2) \to BO(2)$. In this case
$$\Bun_\partial(TF, (\theta^+)^*\gamma_2 ;\ell_{\partial F}) \simeq \begin{cases}
\emptyset & \text{$F$ is not orientable compatibly with $\ell_{\partial F}$,}\\
* & \text{$F$ is orientable compatibly with $\ell_{\partial F}$, and $\partial F \neq \emptyset$}\\
\bZ/2 & \text{$F$ is orientable and has empty boundary.}
\end{cases}$$
Thus for $F$ orientable with non-empty boundary, and $\ell_{\partial F}$ an orientation which extends to $F$, the map $\mathcal{M}^{\theta^+}(F;\ell_{\partial F}) \to \mathcal{M}^{\theta}(F)$ is an equivalence, but for $F$ orientable with empty boundary the map $\mathcal{M}^{\theta^+}(F) \to \mathcal{M}^{\theta}(F)$ is a double cover (up to homotopy). In particular, $\theta$ and $\theta^+$ cannot both have stability for closing the last boundary. In fact, $\theta^+$ satisfies the conditions of Theorem \ref{thm:LastBoundary} and does have stability, but $\theta$ does not.

More generally, any $\theta : \X \to BO(2)$ may be pulled back to $BSO(2)$ to give a new tangential structure $\theta^+$ which is orientable. On surfaces with boundary these yield homotopy equivalent moduli spaces, but on closed surface sthey do not. This shows that the assumption that $\theta^*\gamma_2$ be orientable may be omitted if we only consider surfaces with non-empty boundary.
\end{rem}

\section{Stable homology}\label{sec:StableHomology}

Once we have established homology stability for a tangential structure $\theta : \X \to BO(2)$, the methods of Galatius--Madsen--Tillmann--Weiss \cite{GMTW} identify the stable homology with the homology of certain path components of the infinite loop space of the Thom spectrum
$$\mathbf{MT\theta} := \mathbf{Th}(-\theta^*\gamma_2 \to \X).$$
For surfaces with boundary, we can form the direct limit of
$$H_*(\mathcal{M}^\theta(\Sigma_{g,b})) \lra H_*(\mathcal{M}^\theta(\Sigma_{g+1,b})) \lra H_*(\mathcal{M}^\theta(\Sigma_{g+2,b})) \lra \cdots$$
over gluing on elements of $\mathcal{M}^\theta(\Sigma_{1, 1+1})$, and in the case of non-orientable surfaces we can form the direct limit of
$$H_*(\mathcal{M}^\theta(S_{n,b})) \lra H_*(\mathcal{M}^\theta(S_{n+1,b})) \lra H_*(\mathcal{M}^\theta(S_{n+2,b})) \lra \cdots$$
over gluing on elements of $\mathcal{M}^\theta(S_{1, 1+1})$. Let us write $H_*^{\text{stab}}(\mathcal{M}^\theta)$ for this direct limit: it is a consequence of homological stability that the direct limit does not depend on precisely which maps we use to form it. For each surface $F$ with non-empty boundary there is a map
$$\mathscr{S}_F : H_*(\mathcal{M}^\theta(F;\ell_{\partial F})) \lra H_*^{\text{stab}}(\mathcal{M}^\theta)$$
given by stabilisation.

For closed surfaces we cannot stabilise in this way, but there are nontheless natural maps
$$\mathcal{M}^\theta(F) \lra \Omega^\infty \mathbf{MT\theta},$$
given by the Pontrjagin--Thom construction: see \cite{GMTW, MT} for details of the construction of this map. Let us denote by $\Omega_{[F]}^\infty \mathbf{MT\theta}$ the collection of path components this map hits. On homology we obtain a map
$$\mathscr{S}_F : H_*\left(\mathcal{M}^\theta(F)\right) \lra H_*\left(\Omega_{[F]}^\infty \mathbf{MT\theta}\right),$$
and $H_*(\Omega_{[F]}^\infty \mathbf{MT\theta}) \cong H_*^{\text{stab}}(\mathcal{M}^\theta)$ by \cite{GMTW}. The following theorem describes the range in which these maps are isomorphisms, given the assumptions and notation of Theorems \ref{thm:StabOrientableSurfaces} and \ref{thm:StabNonorientableSurfaces}.

\begin{thm}
Suppose that $\theta$ is $k$-trivial and stabilises for orientable surfaces at genus $h$, and let $F, G : \bZ \to \bZ$ be given by Definition \ref{defn:Orientable:Range}. Then the map
$$\mathscr{S}_{\Sigma_{g,b}} : H_*(\mathcal{M}^\theta(\Sigma_{g,b};\ell_{\partial{\Sigma_{g,b}}})) \lra H_*^{\text{stab}}(\mathcal{M}^\theta)$$
is an isomorphism in degrees $* \leq \min(G(g), F(g)-1)$ (for $b=0$ we must in addition assume that $\theta^*\gamma_2$ is orientable). 

Similarly, suppose that $\theta$ is stabilises for non-orientable surfaces at genus $h$, and is $k'$-trivial for projective planes and stabilises for projective planes at genus $h'$. Let $H' : \bZ \to \bZ$ be given by Definition \ref{defn:Nonorientable:Range}. Then the map
$$\mathscr{S}_{S_{g,b}} : H_*(\mathcal{M}^\theta(S_{g,b};\ell_{\partial{S_{g,b}}})) \lra H_*^{\text{stab}}(\mathcal{M}^\theta)$$
is an isomorphism in degrees $* \leq H'(g)-1$.
\end{thm}
\begin{proof}
In the orientable case, if $b > 0$ we can form $H_*^{\text{stab}}(\mathcal{M}^\theta)$ as the direct limit of
$$H_*(\mathcal{M}^\theta(\Sigma_{g,b})) \overset{\text{type }\beta}\lra H_*(\mathcal{M}^\theta(\Sigma_{g,b+1})) \overset{\text{type }\alpha}\lra H_*(\mathcal{M}^\theta(\Sigma_{g+1,b})) \overset{\text{type }\beta}\lra \cdots$$
and we can choose stabilisation maps of type $\beta$ which admit a right inverse, i.e.\ where one of the created boundary conditions bounds a disc. Thus all the type $\beta$ maps can be taken to be injective in all degrees, and so isomorphisms in degrees $* \leq G(g)$. The claimed range now follows. If $b=0$ we consider instead the commutative diagram
\begin{equation*}
\xymatrix{
H_*(\mathcal{M}^\theta(\Sigma_{g,1};\ell_{\partial \Sigma_{g,1}})) \ar[r]^-{\mathscr{S}_{\Sigma_{g,1}}} \ar[d]^-{\text{type }\gamma}&  H_*^{\text{stab}}(\mathcal{M}^\theta) \ar@{=}[d]\\
H_*(\mathcal{M}^\theta(\Sigma_{g})) \ar[r]^-{\mathscr{S}_{\Sigma_g}} & H_*(\Omega_{[F]}^\infty \mathbf{MT\theta})
}
\end{equation*}
and see that the same stability range holds, as the map of type $\gamma$ is an isomorphism in degrees $* \leq G(g)$ by Theorem \ref{thm:LastBoundary}. The argument for non-orientable surfaces is the same.
\end{proof}

\begin{appendices}
\section{On complexes of arcs in surfaces}\label{app:CxArcs}

In the body of this paper we have required information on the connectivities of certain simplicial complexes which are slight modifications of those discussed in the literature. The purpose of this appendix is to deduce information about the complexes we need from that available in the work of Harer \cite{H} and of Wahl \cite{Wahl}. The complexes we need are subcomplexes of those considered by Harer and Wahl, and we give elementary arguments deducing their high-connectivity from the high-connectivity of the complexes of \cite{H, Wahl}.

\subsection{Arcs in orientable surfaces}\label{sec:CxArcsOrientable}

Let $\Sigma$ be a connected orientable surface with boundary, and let $b_0$, $b_1$ be distinct points on $\partial \Sigma$. Let $BX(\Sigma, \{b_0, b_1\}, \{b_0\})$ be Harer's simplicial complex \cite{H}, whose vertices are isotopy classes of properly embedded arcs in $\Sigma$ from $b_0$ to $b_1$ which do not disconnect $\Sigma$, and a collection of such span a simplex if they have representatives which are disjoint and do not disconnect $\Sigma$. For any simplex $\sigma \subset BX(\Sigma, \{b_0, b_1\}, \{b_0\})$, one can order the arcs clockwise at $b_0$ and anticlockwise at $b_1$, and compare these orderings. Let $B_0(\Sigma)$ denote the subcomplex consisting of those simplices where these two orderings agree. If $b_0, b_1$ lie on the same boundary component, this is the complex of the same name defined by Ivanov \cite{Ivanov}, and we recover his theorem on its connectivity. We are grateful to Nathalie Wahl for suggesting the following line of argument.

\begin{thm}\label{thm:OrientableComplex}
If $\Sigma$ has genus $g$, then $B_0(\Sigma)$ is $(g-2)$-connected.
\end{thm}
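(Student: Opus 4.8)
The plan is to compare $B_0(\Sigma)$ with Harer's full arc complex $BX = BX(\Sigma, \{b_0, b_1\}, \{b_0\})$, for which the connectivity is already known. Harer's theorem (as used, e.g., in \cite{Boldsen}) tells us that $BX$ is highly connected — more precisely that it is homotopy equivalent to a wedge of spheres of a known dimension depending on the genus and number of boundary components — so the issue is to transfer this to the subcomplex $B_0(\Sigma)$ cut out by the ordering condition. The key observation is that a simplex of $BX$ lies in $B_0(\Sigma)$ precisely when the clockwise ordering of its arcs at $b_0$ agrees with the anticlockwise ordering at $b_1$; this is a condition that is inherited by faces, so $B_0(\Sigma)$ really is a subcomplex, and moreover it is a \emph{full} subcomplex on the same vertex set (every vertex of $BX$ lies in $B_0$, since a single arc trivially satisfies the ordering condition).

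The main technical tool will be a "bad simplex" / "colouring" argument in the style of Hatcher--Wahl. First I would set up an auxiliary complex or a poset of "reorderable configurations": given any simplex $\sigma$ of $BX$ that is \emph{not} in $B_0$, one measures the discrepancy between the two cyclic/linear orderings of its arcs. I would then show, by an induction on genus, that $BX$ deformation retracts (or at least admits a connectivity-preserving comparison) onto $B_0(\Sigma)$. The inductive step: cut $\Sigma$ along one of the arcs to reduce the genus or simplify the boundary, matching the recursion in Harer's argument, and arrange that the reordering surgery is compatible with this cut. Concretely, one takes a simplex violating the ordering condition, identifies a pair of adjacent arcs that are "out of order", and performs a boundary-connect-sum / braiding move on the surface that swaps them; iterating this sorts the arcs and lands the simplex in $B_0$. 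One must check that these moves assemble into a homotopy, which is where the care is needed.

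The hard part will be making the reordering operation functorial enough to give an actual homotopy equivalence (or the right connectivity bound) rather than just a bijection on simplices — in particular, handling the interaction between the two endpoints $b_0$ and $b_1$ simultaneously, and the two cases (whether $b_0, b_1$ lie on the same boundary component or on different ones) uniformly. A clean way to organise this, which I would pursue, is: (1) reduce to the case where $b_0$ and $b_1$ are joined by a fixed arc $a$, split $\Sigma$ along $a$, and identify $B_0(\Sigma)$ with a "link"-type complex of arcs in the split surface; (2) run a Hatcher-style "surgery along arcs in a disc neighbourhood of $\partial$" deformation, which both proves high connectivity of that link complex and respects the ordering condition automatically because after cutting the arcs become parallel non-crossing arcs in a surface with corners; (3) feed the resulting connectivity estimate back through the join/cone structure to get the bound $(g-2)$-connectivity. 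Alternatively — and this may be the shorter route — one can directly cite Harer's \cite[Theorem 1.6 / Lemma 3.x]{H} bound for $BX$ and then use the standard fact that the subcomplex $B_0$ is obtained from $BX$ by a sequence of elementary collapses indexed by "badly ordered" simplices, each collapse not changing the homotopy type in the relevant range; verifying that the poset of badly-ordered simplices has the structure needed for such collapses is then the one genuine obstacle. I expect the whole argument to be of length comparable to one of Wahl's \cite{Wahl} connectivity appendices, and to hinge on the combinatorial lemma that sorting arcs by adjacent transpositions can be realised geometrically by disjoint supported braid moves.
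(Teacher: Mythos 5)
Your overall frame---compare $B_0(\Sigma)$ with Harer's $BX(\Sigma,\{b_0,b_1\},\{b_0\})$, run a Hatcher--Wahl ``bad simplex'' argument, and induct on genus by cutting along arcs---is exactly the paper's route. But two of the three concrete mechanisms you propose cannot work, and the one that can is missing its key step. First, $BX$ is a wedge of $(2g-2+\partial)$-spheres while $B_0(\Sigma)$ has dimension $g-1$ (and is only claimed to be $(g-2)$-connected), so $BX$ and $B_0(\Sigma)$ are in general \emph{not} homotopy equivalent; there is therefore no deformation retraction of $BX$ onto $B_0(\Sigma)$, and no scheme of elementary collapses ``not changing the homotopy type.'' Only a statement in a range of degrees can be true, and the ``sorting arcs by adjacent braid moves'' picture is a red herring: one never reorders a badly ordered simplex, one \emph{discards} it and refills its star.

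The correct mechanism, and the step your proposal never identifies, is quantitative. Given $f:S^k\to B_0(\Sigma)$ with $k\le g-2$, extend to a simplicial $\hat{f}:D^{k+1}\to BX$ using Harer's connectivity. Call a simplex \emph{fully bad} if the clockwise ordering at $b_0$ and the anticlockwise ordering at $b_1$ disagree already at the first arc; every simplex factors as (good front face) $*$ (fully bad back face). For a maximal fully bad $\sigma^b$, the restriction of $\hat{f}$ to $\Link(\sigma^b)\cong S^{k+1-|\sigma^b|}$ lifts canonically into $B_0(\Sigma\setminus\sigma^b)$, and the whole induction closes because of the inequality $g(\Sigma\setminus\sigma^b)>g(\Sigma)-|\sigma^b|$: cutting along $m$ arcs loses the full $m$ units of genus \emph{precisely when} the arcs are correctly ordered, so badly ordered arcs waste strictly less genus. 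This gives $k+1-|\sigma^b|\le g(\Sigma\setminus\sigma^b)-2$, so by the genus induction the lift is nullhomotopic via some $F$, and one replaces $\hat{f}$ on the star of $\sigma^b$ by $\hat{f}*F$, strictly decreasing the dimension of fully bad simplices. Without this genus count there is no reason the link map should be nullhomotopic in $B_0$ of the cut surface, and the argument does not close; with it, the proof is short and needs none of the braiding or collapsing machinery you sketch.
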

\begin{proof}
Note that the theorem is clearly true for $g \leq 1$: if $g=0$ then we require the complex to be $(-2)$-connected, which is no condition, and if $g=1$ then we require it to be $(-1)$-connected, i.e.\ non-empty, which is the case. Thus we proceed by induction on $g$.

Harer has shown that $BX(\Sigma, \{b_0, b_1\}, \{b_0\})$ is homotopy equivalent to a wedge of copies of $S^{2g-2+\partial}$, where $\partial$ is the number of boundary components containing the $b_i$. (His proof was slightly incomplete, but has been corrected by Wahl in \cite{Wahl}.) For $k \leq g-2$ let $f: S^k \to B_0(\Sigma)$ be a continuous map, which we may assume to be simplicial for some PL triangulation of $S^k$, and $\hat{f} : D^{k+1} \to BX(\Sigma, \{b_0, b_1\}, \{b_0\})$ be a nullhomotopy in $BX(\Sigma, \{b_0, b_1\}, \{b_0\})$, which we may again suppose to be simplicial for some PL triangulation $\vert K \vert \approx D^{k+1}$. We will show that $\hat{f}$ can be rechosen to have image in $B_0(\Sigma)$.

The vertices of a simplex $\sigma \subset BX(\Sigma, \{b_0, b_1\}, \{b_0\})$ may be given two orders: the clockwise ordering of the arcs at $b_0$, or the anticlockwise ordering of the arcs at $b_1$. By definition, the simplex $\sigma$ lies in $B_0(\Sigma)$ if and only if these two orderings agree. Say that $\sigma$ is \emph{bad} if the first arc with respect to the clockwise ordering at $b_0$ is not the first arc with respect to the the anticlockwise ordering at $b_1$. Note that bad simplices must have dimension at least 1. Bad simplices are not in $B_0(\Sigma)$, and conversely any simplex which is not in $B_0(\Sigma)$ has a face which is bad. Hence if $\hat{f}$ is such that for every simplex $\sigma < K$ the simplex $\hat{f}(\sigma) < BX(\Sigma, \{b_0, b_1\}, \{b_0\})$ is not bad, then $\hat{f}$ has image in $B_0(\Sigma)$. 

\vspace{2ex}

We begin with some preliminary calculations. Let $\sigma < BX(\Sigma, \{b_0, b_1\}, \{b_0\})$ be bad, and let $\Sigma \setminus \sigma$ be the surface obtained by cutting along the arcs in $\sigma$. Writing $g(X)$ for the genus of a connected orientable surface $X$, $b(X)$ for its number of boundary components, and $\vert \sigma\vert$ for the number of vertices of $\sigma$, we estimate
$$g(\Sigma) > g(\Sigma\setminus \sigma) > g(\Sigma) - |\sigma|$$
as follows: we have $\chi(\Sigma\setminus \sigma) = \chi(\Sigma)+|\sigma|$, and so using the identity $\chi(X) = 2-2g(X)-b(X)$ for a connected orientable surface $X$, we obtain
$$2(g(\Sigma)-g(\Sigma\setminus \sigma) - |\sigma|) = b(\Sigma\setminus \sigma) - b(\Sigma) -|\sigma|,$$
so the inequalities claimed are equivalent to the inequalities
$$b(\Sigma) - \vert \sigma \vert < b(\Sigma\setminus \sigma) < b(\Sigma) +|\sigma|.$$
We prove these inequalities by cases.
\begin{enumerate}[(i)]
	\item If $b_0$ and $b_1$ lie on different boundary components, then given $|\sigma|$ arcs, cutting along the first arc reduces the number of boundary components by one, and cutting along each subsequent arc at most increases the number of boundary components by one, which proves $b(\Sigma\setminus \sigma) < b(\Sigma) +|\sigma|$. On the other hand, once the first arc is cut out, the second arc has both ends on the same boundary component, so cutting it out increases the number of boundary components by one. Cutting out subsequent arcs reduces the number of boundary components by at most one each, so $b(\Sigma) - (\vert \sigma \vert-2) \leq b(\Sigma\setminus \sigma)$. (This did not require $\sigma$ to be bad.)

	\item If $b_0$ and $b_1$ lie on a single boundary component, let $a_0$ be the first arc in the clockwise ordering at $b_0$, and $a_1$ be the first arc in the anticlockwise ordering at $b_1$. As $\sigma$ is bad, $a_0 \neq a_1$. If we cut the arc $a_0$ out then we replace the boundary containing $b_0$ and $b_1$ by two boundary components, and as $a_1$ is the first arc in the anticlockwise ordering at $b_1$, on the cut surface $a_1$ gives an arc between these boundary components. Thus cutting $a_1$ out reduces the number of boundary components by one, so we have the same number of boundary components as when we started. Cutting each subsequent arc out creates or removes at most one boundary component, so $b(\Sigma) -(|\sigma|-2) \leq b(\Sigma\setminus \sigma) \leq b(\Sigma) +(|\sigma|-2)$.
\end{enumerate}

On the cut surface $\Sigma \setminus \sigma$ there are multiple copies of $b_0$ and $b_1$, but we can single out a preferred copy of each, $\tilde{b}_0$ and $\tilde{b}_1$, as follows: $\tilde{b}_0$ is the copy lying on the first (in the clockwise ordering at $b_0$) of the two edges formed by cutting along $a_0$, and $\tilde{b}_1$ is the copy lying on the first (in the anticlockwise ordering at $b_1$) of the two edges formed by cutting along $a_1$. The map $\Sigma \setminus \sigma \to \Sigma$ that glues the arcs together induces a simplicial map
$$BX(\Sigma \setminus \sigma, \{\tilde{b}_0, \tilde{b}_1\}, \{\tilde{b}_0\}) \lra BX(\Sigma, \{b_0, b_1\}, \{b_0\}),$$
and it is easy to see that this is the inclusion of a subcomplex.

\vspace{2ex}

This finishes the preliminary calculations, and we now begin the argument showing that $\hat{f}$ can be rechosen to have image in $B_0(\Sigma)$. Let $\sigma < K$ be a maximal dimensional simplex such that $\hat{f}(\sigma)$ is bad. We claim that the map
$$\hat{f}\vert_{\Link(\sigma)} : \Link(\sigma) \lra BX(\Sigma, \{b_0, b_1\}, \{b_0\})$$
in fact lands in the subcomplex
$$B_0(\Sigma\setminus \hat{f}(\sigma^b)) \subset BX(\Sigma\setminus \hat{f}(\sigma), \{\tilde{b}_0, \tilde{b}_1\}, \{\tilde{b}_0\}) \subset BX(\Sigma, \{b_0, b_1\}, \{b_0\}).$$

To see that it lands in the subcomplex $BX(\Sigma\setminus \hat{f}(\sigma), \{\tilde{b}_0, \tilde{b}_1\}, \{\tilde{b}_0\})$ we must show that when considered to lie in $\Sigma \setminus \hat{f}(\sigma)$, the arcs of $\hat{f}(\tau)$ for $\tau \in \Link(\sigma)$ start at $\tilde{b}_0$ and end at $\tilde{b}_1$. If this were not the case, then some vertex of $\tau$ can be added to $\sigma$ to give a larger simplex which is still fully bad, violating the assumed maximality of $\sigma$. Similarly, if $\hat{f}(\tau)$ does not lie in $B_0(\Sigma\setminus \hat{f}(\sigma))$ then $\hat{f}(\tau)$ has a face $\hat{f}(\tau') \leq \hat{f}(\tau)$ which is bad, but then $\hat{f}(\tau' * \sigma)$ is bad too, which again violates the maximality of $\sigma$. This proves the claim.

We have shown above that $g(\Sigma \setminus \hat{f}(\sigma)) < g(\Sigma)$, so by inductive hypothesis the complex $B_0(\Sigma \setminus \hat{f}(\sigma))$ is $(g(\Sigma \setminus \hat{f}(\sigma))-2)$-connected, and we may compute
$$g(\Sigma\setminus \hat{f}(\sigma)) > g(\Sigma) - |\hat{f}(\sigma)| \geq  k-|\hat{f}(\sigma)|+2 \geq k-|\sigma|+2$$
and so $k-|\sigma|+1 \leq  g(\Sigma\setminus \hat{f}(\sigma))-2$. As $\sigma$ is a simplex of a PL triangulation of $D^{k+1}$ which does not lie completely in the boundary (as the boundary maps to $B_0(\Sigma)$, so has no bad simplices), we have $\Link(\sigma) \approx S^{k-\vert\sigma\vert+1}$. It follows that the map $\hat{f}\vert_{\Link(\sigma)} : \Link(\sigma) \to B_0(\Sigma\setminus \hat{f}(\sigma))$ is nullhomotopic. Let us write $F : C\Link(\sigma) \to B_0(\Sigma\setminus \hat{f}(\sigma))$ for a choice of nullhomotopy, which we may suppose is simplicial with respect to some triangulation of $C\Link(\sigma)$ extending that of $\Link(\sigma)$.

We now define
$$F * \hat{f} :  C\Link(\sigma) * (\partial \sigma) \approx \mathrm{St}(\sigma) \lra BX(\Sigma, \{b_0, b_1\}, \{b_0\}),$$
a modification of $\hat{f}$ on the star of $\sigma < K$. This gives a new triangulation $\vert K' \vert \approx D^{k+1}$ and map $\hat{f}'$. Furthermore, as $\sigma$ does not lie entirely in the boundary of $D^{k+1}$ we have $\mathrm{St}(\sigma) \cap \partial D^{k+1} \subset \Link(\sigma) * (\partial \sigma)$ where the new map agrees with the old, so $\hat{f}'$ is still a nullhomotopy of $f$. The new simplices of $K'$ are of the form $\alpha * \beta$ for $\beta < \partial\sigma$ and $\alpha$ mapping through $B_0(\Sigma\setminus \sigma) \to BX(\Sigma, \{b_0, b_1\}, \{b_0\})$.

As long as $\alpha \neq \emptyset$ the first arc of $\alpha*\beta$ in the clockwise ordering at $b_0$ or the anticlockwise ordering at $b_1$ is the first arc of $\alpha$ in either of these orderings, so they are equal and hence $\alpha * \beta$ is not bad. Alternatively, if $\alpha = \emptyset$ then $\alpha * \beta = \beta < \partial \sigma$ is of strictly smaller dimension to $\sigma$. In either case, we have replaced $(K, \hat{f})$ by similar data with strictly fewer bad simplices of maximal dimension: iterating, we find a $(K', \hat{f}')$ having no bad simplices, as required.
\end{proof}

\subsection{Arcs in non-orientable surfaces}\label{sec:CxArcsNonorientable}

Firstly, recall that we say a connected non-orientable surface $S$ has genus $g$ if it is diffeomorphic to a surface obtained from $\#^g \bR\bP^2$ by removing a finite number of disjoint open discs. In this case we write $g(S)=g$.

Let $S$ be a non-orientable surface and $\vec{b}_0$, $\vec{b}_1$ be oriented points on $\partial S$ i.e.\ points with a chosen orientation of their tangent space in $\partial S$. Following Wahl \cite{Wahl}, we define a \emph{1-sided arc} from $\vec{b}_0$ to $\vec{b}_1$ to be an embedded arc from $b_0$ to $b_1$ which admits a normal orientation compatible with those of $\vec{b}_0$ and $\vec{b}_1$, and has connected non-orientable complement. Let $C(S, \vec{b}_0, \vec{b}_1)$ denote the simplicial complex with vertices the isotopy classes of 1-sided arcs from $\vec{b}_0$ to $\vec{b}_1$ which do not disconnect $S$ and have non-orientable complement, and where a collection of vertices span a simplex if they can be made disjoint, and have connected non-orientable complement. This is related to the complexes $\mathcal{G}(S, \vec{\Delta})$ of Wahl \cite{Wahl}. In Wahl's notation, $\vec{\Delta}$ denotes a set of oriented points in $\partial S$ and $\mathcal{G}(S, \vec{\Delta})$ denotes the simplicial complex whose vertices are the isotopy classes of 1-sided arcs in $S$ with ends in $\vec{\Delta}$, and where a collection of these span a simplex if they may be represented by disjoint arcs with connected non-orientable complement. Specifically, $C(S, \vec{b}_0, \vec{b}_1)$ is the full subcomplex of $\mathcal{G}(S, \{\vec{b}_0,\vec{b}_1\})$ on those arcs which go from $\vec{b}_0$ to $\vec{b}_1$.

Using the orientation of the tangent space given by $\vec{b}_i$ and the inwards normal vector, we can order the arcs clockwise or anticlockwise at each of the points $b_i$.

Let $\vec{b}_0$ and $\vec{b}_1$ lie on the same boundary component, and have coherent orientations. Let $C_0(S)$ denote the subcomplex of $C(S, \vec{b}_0, \vec{b}_1)$ where the clockwise ordering at $b_0$ coincides with the anticlockwise ordering at $b_1$. 

\begin{thm}\label{thm:MobiusComplex}
If $S$ has genus $g$, then $C_0(S)$ is $(\lfloor \frac{g-1}{3} \rfloor -1)$-connected.
\end{thm}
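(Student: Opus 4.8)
The plan is to mimic the nullhomotopy‑pushing argument used for Theorem~\ref{thm:OrientableComplex}, inducting on the non‑orientable genus $n$ with the statement ``$C_0(S_{n,b})$ is $(\lfloor\tfrac{n-1}{3}\rfloor-1)$‑connected for every $b$''. For $n\le 3$ the bound is $-1$, and since a single one‑sided arc from $b_0$ to $b_1$ always exists the complex is non‑empty, so the base cases are immediate. For the inductive step, fix $k\le\lfloor\tfrac{n-1}{3}\rfloor-1$ and a simplicial map $f:S^k\to C_0(S)$. The ambient complex is Wahl's complex $C(S,b_0,b_1)$ of all one‑sided arcs with connected non‑orientable (or genus $0$) complement; its connectivity — at least $\lfloor\tfrac{n-1}{3}\rfloor-1$ — we would record first as a consequence of the results of \cite{Wahl} on the complexes $\mathcal{G}(S,\vec\Delta)$. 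Choose a simplicial nullhomotopy $\hat f:D^{k+1}\to C(S,b_0,b_1)$; the task is to modify $\hat f$ so that its image lies in $C_0(S)$.

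The badness measure is exactly as before: for a simplex $\sigma\subset D^{k+1}$ order the arcs of $\hat f(\sigma)$ clockwise at $b_0$, let $v(\sigma)$ be the largest integer such that this order agrees with the anticlockwise order at $b_1$ on the first $v(\sigma)$ vertices, and write $\sigma=\sigma^g*\sigma^b$ with $\sigma^g$ the front $v(\sigma)$‑face (whose image lies in $C_0(S)$) and $\sigma^b$ the complementary \emph{fully bad} face, $v(\sigma^b)=0$. A single arc is trivially correctly ordered, so every fully bad simplex has at least two vertices. Let $\sigma^b$ be a maximal fully bad simplex; cutting $S$ along its arcs gives a connected surface $S\setminus\sigma^b$, which one hopes to be $S_{n',b'}$ with $n'\ge 1$. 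Singling out the copies of $b_0,b_1$ adjacent to the new boundary created by the first arc of $\sigma^b$, one obtains a commuting square
\begin{diagram}
\Link(\sigma^b)\simeq S^{k-|\sigma^b|+1} & \rTo^{\hat f} & C(S,b_0,b_1)\\
\dTo & & \uTo\\
C_0(S\setminus\sigma^b) & \rTo & C(S\setminus\sigma^b,b_0,b_1)
\end{diagram}
in which the canonical lift of $\hat f|_{\Link(\sigma^b)}$ lands in $C_0(S\setminus\sigma^b)$ precisely because any $\tau\subset\Link(\sigma^b)$ has $v(\tau*\sigma^b)=|\tau|$ by maximality of $\sigma^b$. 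If $C_0(S\setminus\sigma^b)$ is $(k-|\sigma^b|+1)$‑connected, the left‑hand map is nullhomotopic, and coning it off inside $C(S,b_0,b_1)$ replaces the star of $\sigma^b$ by simplices that are either not fully bad or fully bad of strictly smaller dimension; iterating removes all top‑dimensional fully bad simplices, and by downward induction pushes $\hat f$ into $C_0(S)$, as required.

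Everything thus reduces to the inequality $\lfloor\tfrac{n'-1}{3}\rfloor-1\ge k-|\sigma^b|+1$, which by the inductive hypothesis (applicable since $n'<n$) amounts to bounding $n'=n(S\setminus\sigma^b)$ from below — and this is the step I expect to be the main obstacle, and the reason the non‑orientable case genuinely differs from Theorem~\ref{thm:OrientableComplex}. An Euler characteristic count gives $n'=n+b-b'-|\sigma^b|$, so one needs an upper bound on the number of boundary components $b'$ of the cut surface. The subtlety is that, although each $a_j$ is one‑sided in $S$, after cutting earlier arcs a later arc may become two‑sided with both ends on one boundary circle, and cutting it then splits that circle and costs two units of genus rather than one; the worst case is $b'=b+|\sigma^b|-1$ (the first arc, being one‑sided, cannot split a boundary), giving $n'=n-2|\sigma^b|+1$. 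Substituting $k\le\lfloor\tfrac{n-1}{3}\rfloor-1$, the required inequality becomes $\lfloor\tfrac{n-2|\sigma^b|}{3}\rfloor\ge\lfloor\tfrac{n-1}{3}\rfloor-|\sigma^b|+1$, which holds precisely for $|\sigma^b|\ge 2$ — and fully bad simplices do have at least two vertices. Carefully checking this floor arithmetic for all residues of $n$ modulo $3$, confirming that $S\setminus\sigma^b$ is non‑orientable of positive genus in the relevant range (or handling the planar‑complement case separately) so that the inductive hypothesis and the complex $C_0$ both apply, and verifying that $b'\le b+|\sigma^b|-1$ is genuinely the extremal value compatible with a connected non‑orientable complement, are the points requiring real attention; a finer analysis of how the fully bad ordering constrains $b'$ may well give a little room to spare.
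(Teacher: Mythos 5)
Your proposal is correct and follows essentially the same route as the paper: take a nullhomotopy in an ambient Wahl-type arc complex and push it into the ordered subcomplex by coning off maximal fully bad simplices over their links, using exactly the key genus estimate $n(S\setminus\sigma^b)\ge n(S)-2|\sigma^b|+1$ (the first arc is one-sided, so costs only one genus) together with the fact that fully bad simplices have at least two vertices, and the floor arithmetic you set up does close for $|\sigma^b|\ge 2$. The one step you leave implicit — the connectivity of the ambient complex $C(S,b_0,b_1)$, which is not verbatim one of Wahl's complexes — is handled in the paper by composing every arc with a fixed boundary path from $b_1$ to $b_0$, identifying $C_0(S,b_0,b_1)$ with the palindromically ordered subcomplex $\mathcal{G}_0(S,\overrightarrow{b_0})$ of $\mathcal{G}(S,\overrightarrow{b_0})$, the latter being $(n-3)$-connected by Wahl's theorem.
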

\begin{proof}
Consider the subcomplex $\mathcal{G}_0(S, \vec{b}_0)$ of Wahl's $\mathcal{G}(S, \vec{b}_0)$ consisting of those simplices which are ordered \textit{palindromically}: the $k$th arc in the clockwise order is the $k$th arc in the anticlockwise order, for all $k$. Recall that $\mathcal{G}(S, \vec{b}_0)$ is $(g-3)$-connected \cite[Theorem 3.3]{Wahl}.

The complex $C_0(S, \vec{b}_0, \vec{b}_1)$ is homeomorphic to $\mathcal{G}_0(S, \vec{b}_0)$ as follows. Choose a path in the boundary from $b_1$ to $b_0$, then composing arcs with this path defines a map $C_0(S, \vec{b}_0, \vec{b}_1) \to \mathcal{G}_0(S, \vec{b}_0)$ which is easily seen to be simplicial and a bijection on sets of simplices.

Note first that the theorem is trivially true for $g \leq 3$, so suppose for an induction that it holds for all genera below $g$. Let $k \leq \lfloor \frac{g-1}{3} \rfloor - 1$ and take a continuous map $f : S^k \to \mathcal{G}_0(S, \vec{b}_0)$, which we may suppose is simplicial for some PL triangulation of the $k$-sphere. The composition $S^k \to \mathcal{G}_0(S, \vec{b}_0) \to \mathcal{G}(S, \vec{b}_0)$ is nullhomotopic, by the discussion above and the inequality
$$\lfloor \tfrac{g-1}{3} \rfloor -1 \leq g-3,$$
which holds as long as $g \geq 2$, so we may choose a nullhomotopy $\hat{f}: D^{k+1} \to \mathcal{G}(S, \vec{b}_0)$, which we may again suppose to be simplicial with respect to some PL triangulation $\vert K \vert \approx D^{k+1}$. We will modify this map relative to $\partial D^{k+1}$ to have image in $\mathcal{G}_0(S, \vec{b}_0)$. 

The technique for doing so is the same as that of Theorem \ref{thm:OrientableComplex}, so we describe it is less detail, only pointing out the places where the arguments differ. Call a simplex $\sigma < \mathcal{G}(S, \vec{b}_0)$ \emph{bad} if the first arc in the clockwise order is not the first arc in the anticlockwise order. As before, it will be enough to change $K$ and $\hat{f}$ so that for every simplex $\sigma < K$, $\hat{f}(\sigma)$ is not bad.

To do this, let $\sigma < K$ be a simplex of maximal dimension so that $\hat{f}(\sigma)$ is bad, and let $a_0$ be the first arc of $\hat{f}(\sigma)$ in the clockwise order and $a_1$ be the first arc of $\hat{f}(\sigma)$ in the anticlockwise order. As $\hat{f}(\sigma)$ is bad, $a_0 \neq a_1$. Let $\vec{c}_0$ be the oriented point on the boundary of $S \setminus \hat{f}(\sigma)$ which lifts $\vec{b}_0$ and lies on the first (in the clockwise ordering) of the two edges formed by cutting along $a_0$, and let $\vec{c}_1$ be the oriented point on the boundary of $S \setminus \hat{f}(\sigma)$ which lifts $\vec{b}_0$ and lies on the first (in the anticlockwise ordering) of the two edges formed by cutting along $a_1$. The map $S \setminus \hat{f}(\sigma) \to S$ that glues the arcs together  induces a simplicial map
$$C_0(S \setminus \hat{f}(\sigma), \vec{c}_0, \vec{c_1}) \lra \mathcal{G}(S, \vec{b}_0),$$
and it is easy to see that this is the inclusion of a subcomplex.

If $\tau < \Link(\sigma)$ then $\hat{f}(\tau) < C_0(S \setminus \hat{f}(\sigma), \vec{c}_0, \vec{c_1}) < \mathcal{G}(S, \vec{b}_0)$, as in the proof of Theorem \ref{thm:OrientableComplex}, so we have $\hat{f}\vert_{\Link(\sigma)} : \Link(\sigma) \to C_0(S \setminus \hat{f}(\sigma), \vec{c}_0, \vec{c_1})$. Writing $g(X)$ for the genus of a connected non-orientable surface $X$, the required estimates in this case are
$$g(S) > g(S \setminus \hat{f}(\sigma)) \geq g(S)-2|\hat{f}(\sigma)|+1 \geq g(S)-2|\sigma|+1,$$
which follow as removing the first arc loses a single genus, and removing subsequent arcs loses at most two genera per arc. As $g(S \setminus \hat{f}(\sigma)) < g(S)=g$, by inductive hypothesis it follows that $C_0(S \setminus \hat{f}(\sigma), \vec{c}_0, \vec{c_1}) \cong \mathcal{G}_0(S \setminus \hat{f}(\sigma), \vec{c}_0)$ is $(\lfloor \frac{g(S \setminus \hat{f}(\sigma))-1}{3} \rfloor-1)$-connected. We calculate
$$3(k+1-|\sigma|) \leq g(S)-1-3|\sigma| \leq g(S \setminus \hat{f}(\sigma)) - |\sigma|-2 \leq g(S \setminus \hat{f}(\sigma)) - 4,$$
as we must have $|\sigma| \geq 2$ for $\hat{f}(\sigma)$ to be bad. As $\Link(\sigma) \approx S^{k+1-\vert\sigma\vert}$, it follows that the map $\hat{f}\vert_{\Link(\sigma)}$ is nullhomotopic. We then finish as in the proof of Theorem \ref{thm:OrientableComplex}.
\end{proof}

If $\vec{b}_0$ and $\vec{b}_1$ lie on the same boundary component, and have opposite orientations, let $D_0(S, \vec{b}_0, \vec{b}_1)$ denote the subcomplex of $C(S, \vec{b}_0, \vec{b}_1)$ where the clockwise ordering at $\vec{b}_0$ coincides with the clockwise ordering at $\vec{b}_1$. 

Let $c : [0,1] \hookrightarrow \partial S$ be an embedded interval in the same boundary component as the $b_i$. Forming the boundary connect sum with $S_{h,1}$ inside this interval gives a direct system of simplicial complexes
$$D_0(S, \vec{b}_0, \vec{b}_1) \lra D_0(S \natural S_{1,1}, \vec{b}_0, \vec{b}_1) \lra D_0(S \natural S_{2,1}, \vec{b}_0, \vec{b}_1) \lra \cdots.$$

\begin{thm}\label{thm:NonorientableHandleComplex}
The space $\underset{h\to\infty}\hocolim D_0(S \natural S_{h,1}, \vec{b}_0, \vec{b}_1)$ is contractible.
\end{thm}
\begin{proof}
We claim that the double stabilisation map
$$D_0(S, \vec{b}_0, \vec{b}_1) \lra D_0(S \natural S_{2,1}, \vec{b}_0, \vec{b}_1)$$
is nullhomotopic: in fact, we claim that it has image in the link of a particular vertex. To construct this vertex, consider the submanifold $W \subset S \natural S_{2,1}$ given by $S_{2,1}$ and a regular neighbourhood of the component of $\partial S$ containing the $b_i$. This is abstractly diffeomorphic to $S_{2,2}$, and contains a 1-sided arc $v$ from $\vec{b}_0$ to $\vec{b}_1$ as shown in Figure \ref{fig:ThmA3}, which gives a vertex $v \in D_0(S \natural S_{2,1}, \vec{b}_0, \vec{b}_1)$. (Strictly speaking $v$ is not a 1-sided arc in $W$, as its complement is orientable, but in $S \natural S_{2,1}$ it is a 1-sided arc.)

\begin{figure}[h]
\centering
\includegraphics[bb = 0 0 224 106]{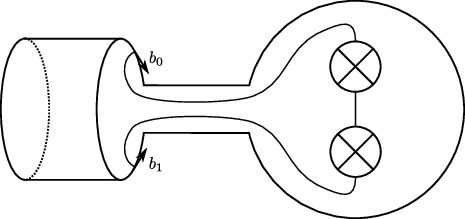}
\caption{}\label{fig:ThmA3}
\end{figure}

If $\sigma = \{v_0, v_1, \ldots, v_p\} \subset D_0(S, \vec{b}_0, \vec{b}_1)$ is a simplex then we may choose representatives of the arcs $v$ and  $v_i$ which are disjoint in $S \natural S_{2,1}$, as the $v_i$ may be pushed out of the $S_{2,1}$ part. It is clear that cutting out the arcs $\{v, v_0, v_1, \ldots, v_p\}$ leaves a connected non-orientable complement, as cutting out the $v_i$ from $S$ does, and cutting out $v$ from $W$ leaves a connected complement. Furthermore, $v$ either comes first in the clockwise order at $\vec{b}_0$ and $\vec{b}_1$, or comes last in both of them (this depends on whether the interval $c([0,1]) \subset \partial S$ has the orientated points $\vec{b}_i$ pointing towards it, as in Figure \ref{fig:ThmA3}, or away from it). In any case, $\sigma * v \in D_0(S \natural S_{2,1}, b_0, b_1)$, as claimed.
\end{proof}

Let $\vec{b}_0$ and $\vec{b}_1$ lie on different boundary components. Let $E_0(S, \vec{b}_0, \vec{b}_1)$ denote the subcomplex of $C(S, \vec{b}_0, \vec{b}_1)$ where the clockwise ordering at $\vec{b}_0$ coincides with the clockwise ordering at $\vec{b}_1$. 

Let $c : [0,1] \hookrightarrow \partial S$ be an embedded interval in the same boundary component as $b_0$. Forming the boundary connect sum with $S_{h,1}$ inside this interval gives a direct system of simplicial complexes
$$E_0(S, \vec{b}_0, \vec{b}_1) \lra E_0(S \natural S_{1,1}, \vec{b}_0, \vec{b}_1) \lra E_0(S \natural S_{2,1}, \vec{b}_0, \vec{b}_1) \lra \cdots.$$

\begin{thm}\label{thm:NonorientableBoundaryComplex}
The space $\underset{h \to \infty}\hocolim E_0(S \natural S_{h,1}, \vec{b}_0, \vec{b}_1)$ is contractible.
\end{thm}
\begin{proof}
The simplicial complex $E_0(S, \vec{b}_0, \vec{b}_1)$ is the subcomplex of Wahl's complex $\mathcal{G}(S, \{\vec{b}_0, \vec{b}_1\})$ of those arcs which go from $b_0$ to $b_1$ and satisfy the ordering condition. By \cite[Theorem 3.5]{Wahl} the space $\underset{h\to\infty}\hocolim \mathcal{G}(S \natural S_{h,1}, \{\vec{b}_0, \vec{b}_1\})$ is contractible.

We will show that $\underset{h\to\infty}\hocolim E_0(S \natural S_{h,1}, \vec{b}_0, \vec{b}_1)$ is $k$-connected by induction on $k$: certainly it is $(-1)$-connected (i.e.\ nonempty), which begins the induction. Any element of $\pi_k(\underset{h\to\infty}\hocolim E_0(S \natural S_{h,1}, \vec{b}_0, \vec{b}_1))$ may be represented by a map 
$$f : S^k \lra E_0(S \natural S_{h,1}, \vec{b}_0, \vec{b}_1)$$
which is simplicial with respect to some PL triangulation of the sphere, and by Wahl's theorem we may extend this to a map $\hat{f} : D^{k+1} \to \mathcal{G}(S \natural S_{h,1}, \{\vec{b}_0, \vec{b}_1\})$ after perhaps increasing $h$, which may also suppose to be simplicial with respect to a PL triangulation $\vert K \vert \approx D^{k+1}$. 

Call a simplex $\sigma < \mathcal{G}(S \natural S_{h,1}, \{\vec{b}_0, \vec{b}_1\})$ \emph{bad} if the first arc, $a_0$, in the clockwise ordering at $\vec{b}_0$ is not the first arc, $a_1$, in the clockwise ordering at $\vec{b}_1$. (Note that this can happen for two reasons: some $a_i$ might have both ends at $b_i$, or else the $a_i$ both go from $b_0$ to $b_1$.) As in the previous arguments, it is enough to ensure that for each $\sigma < K$, $\hat{f}(\sigma)$ is not bad, and we proceed in the same manner by giving a technique for reducing the number of maximal dimensional bad simplices of $K$ by changing $K$ and the map $\hat{f}$. When changing these data, we are also allowed to increase $h$.

We proceed as in the proof of Theorem \ref{thm:OrientableComplex} and Theorem \ref{thm:MobiusComplex}, with the following extra fundamental observation: if $\hat{f}(\sigma) < \mathcal{G}(S\natural S_{h,1}, \{\vec{b}_0, \vec{b}_1\})$ is a bad simplex then it must consist of at least 2 arcs, so $\sigma < K$ is a simplex of dimension at least 1 which does not lie entirely in the boundary, so $\Link(\sigma)$ is homeomorphic to a sphere of dimension at most $(k-1)$. Thus, after perhaps increasing $h$, we may suppose that the map
$$\hat{f}\vert_{\Link(\sigma)} : \Link(\sigma) \lra E_0((S \natural S_{h,1}) \setminus \hat{f}(\sigma), \vec{b}_0, \vec{b}_1) < \mathcal{G}(S\natural S_{h,1}, \{\vec{b}_0, \vec{b}_1\})$$
is nullhomotopic.
\end{proof}

\end{appendices}

\bibliographystyle{plain}
\bibliography{biblio}  

\end{document}